\theoremstyle{plain}
\newtheorem{proposition}{Proposition}[section]
\newtheorem{theorem}[proposition]{Theorem}
\newtheorem{lemma}[proposition]{Lemma}
\newtheorem{corollary}[proposition]{Corollary}
\theoremstyle{plain}
\newtheorem{definition}[proposition]{Definition}
\newtheorem{remark}[proposition]{Remark}
\theoremstyle{nonumberplain}
\newtheorem{proof}{Proof}
\newcommand{\qed}{\hfill $\Box$}
\newcommand{\ZZ}{{\mathbb Z}}
\newcommand{\RR}{{\mathbb R}}
\newcommand{\CC}{{\mathbb C}}
\newcommand{\QQ}{{\mathbb Q}}
\newcommand{\LL}{{\mathbb L}}
\renewcommand{\d}{{\rm dim}}
\newcommand{\e}{\varepsilon}
\renewcommand{\SS}{{\mathcal S}}
\newcommand{\Spec}{{\rm Spec}}
\newcommand{\Con}{{\rm Con}}
\newcommand{\id}{{\rm id}}
\newcommand{\supp}{{\rm supp}}
\newcommand{\Int}{{\rm Int}}
\newcommand{\Var}{{\rm Var}}
\newcommand{\HSm}{{\rm HS}^{\rm mon}}
\newcommand{\relint}{{\rm rel.int}}
\newcommand{\dist}{{\rm dist}}
\newcommand{\height}{{\rm ht}}
\newcommand{\dd}{b}
\newcommand{\KK}{{\rm K}}
\newcommand{\M}{{\mathcal M}}
\newcommand{\tl}[1]{\widetilde{#1}}
\newcommand{\simto}{\overset{\sim}{\longrightarrow}}
\newcommand{\dsum}{\displaystyle \sum}
\renewcommand{\(}{\left(}
\renewcommand{\)}{\right)}
\begin{document}

\title{Motivic Milnor fibers and Jordan normal forms of Milnor monodromies
\footnote{ {\bf 2010 Mathematics Subject Classification:}14E18, 14M25, 32C38, 32S35, 32S40, 
%Key words: Milnor fiber, Milnor monodromy, mixed Hodge number, Newton polyhedron
}}

\author{Yutaka \textsc{Matsui}\footnote{Department of Mathematics, Kinki University, 3-4-1, Kowakae, Higashi-Osaka, Osaka, 577-8502, Japan. E-mail: matsui@math.kindai.ac.jp} \and Kiyoshi \textsc{Takeuchi}\footnote{Institute of Mathematics, University  of Tsukuba, 1-1-1, Tennodai, Tsukuba, Ibaraki, 305-8571, Japan. E-mail: takemicro@nifty.com}}

\date{}

\sloppy

\maketitle

\begin{abstract}
By calculating the equivariant mixed Hodge numbers of motivic Milnor fibers introduced by Denef-Loeser, we obtain explicit formulas for the Jordan normal forms of Milnor monodromies. The numbers of the Jordan blocks will be described by the Newton polyhedron of the polynomial. 
\end{abstract}

\section{Introduction}\label{sec:1}

In this paper, by using motivic Milnor fibers introduced by Denef-Loeser \cite{D-L-1} and \cite{D-L-2}, we obtain explicit formulas for the Jordan normal forms of Milnor monodromies. Let $f(x)= \sum_{v \in \ZZ_+^n} a_v x^v \in \CC[x_1,\ldots,x_n]$ be a polynomial on $\CC^n$ such that the hypersurface $f^{-1}(0)= \{ x \in \CC^n \ |\ f(x)=0 \}$ has an isolated singular point at $0\in \CC^n$. Then by a fundamental theorem of Milnor \cite{Milnor}, the Milnor fiber $F_0$ of $f$ at $0 \in \CC^n$ has the homotopy type of bouquet of $(n-1)$-spheres. In particular, we have $H^j(F_0;\CC) \simeq 0$ ($j\neq 0, \ n-1$). Denote by
\begin{equation}
\Phi_{n-1,0} \colon H^{n-1}(F_0;\CC) \simto H^{n-1}(F_0;\CC)
\end{equation}
\noindent the $(n-1)$-th Milnor monodromy of $f$ at $0 \in \CC^n$. By the theory of monodromy zeta functions due to A'Campo \cite{A'Campo} and Varchenko \cite{Varchenko} etc., the eigenvalues of $\Phi_{n-1,0}$ were fairly well-understood. See Oka's book \cite{Oka} for an excellent exposition of this very important result. However to the best of our knowledge, it seems that the Jordan normal form of $\Phi_{n-1,0}$ is not fully understood yet. In this paper, we give a combinatorial description of the Jordan normal form of $\Phi_{n-1,0}$ by using motivic Milnor fibers (For a computer algorithm by Brieskorn lattices, see Schulze \cite{Schulze} etc.).

From now on, let us assume also that $f$ is convenient and non-degenerate at $0 \in \CC^n$ (see Definitions \ref{CVN} and \ref{NDC}). Note that the second condition is satisfied by generic polynomials $f$. Then we can describe the Jordan normal form of $\Phi_{n-1,0}$ very explicitly as follows. We call the convex hull of $\bigcup_{v \in \supp (f)} \{v + \RR_+^n\}$ in $\RR_+^n$ the Newton polyhedron of $f$ and denote it by $\Gamma_+(f)$. Let $q_1,\ldots,q_l$ (resp. $\gamma_1,\ldots, \gamma_{l^{\prime}}$) be the $0$-dimensional (resp. $1$-dimensional) faces of $\Gamma_{+}(f)$ such that $q_i\in \Int (\RR_+^n)$ (resp. the relative interior $\relint(\gamma_i)$ of $\gamma_i$ is contained in $\Int(\RR_+^n)$). For each $q_i$ (resp. $\gamma_i$), denote by $d_i >0$ (resp. $e_i>0$) its lattice distance $\dist(q_i, 0)$ (resp. $\dist(\gamma_i,0)$) from the origin $0\in \RR^n$. For $1\leq i \leq l^{\prime}$, let $\Delta_i$ be the convex hull of $\{0\}\sqcup \gamma_i$ in $\RR^n$. Then for $\lambda \in \CC \setminus \{1\}$ and $1 \leq i \leq l^{\prime}$ such that $\lambda^{e_i}=1$ we set
\begin{equation}
n(\lambda)_i
= \sharp\{ v\in \ZZ^n \cap \relint(\Delta_i) \ |\ \height (v, \gamma_i)=k\} +\sharp \{ v\in \ZZ^n \cap \relint(\Delta_i) \ |\ \height (v, \gamma_i)=e_i-k\},
\end{equation}
where $k$ is the minimal positive integer satisfying $\lambda=\zeta_{e_i}^{k}$ ($\zeta_{e_i}:=\exp (2\pi\sqrt{-1}/e_i)$) and for $v\in \ZZ^n \cap \relint(\Delta_i)$ we denote by $\height (v, \gamma_i)$ the lattice height of $v$ from the base $\gamma_i$ of $\Delta_i$. Then in Section \ref{sec:4} we prove the following result which describes the number of Jordan blocks for each fixed eigenvalue $\lambda \neq 1$ in $\Phi_{n-1, 0}$. Recall that by the monodromy theorem the sizes of such Jordan blocks are bounded by $n$.

\begin{theorem}\label{thm:1-1}
Assume that $f$ is convenient and non-degenerate at $0 \in \CC^n$. Then for any $\lambda \in \CC^* \setminus \{1\}$ we have
\begin{enumerate}
\item The number of the Jordan blocks for the eigenvalue $\lambda$ with the maximal possible size $n$ in $\Phi_{n-1,0} \colon H^{n-1}(F_0;\CC) \simto H^{n-1}(F_0;\CC)$ is equal to $\sharp \{q_i \ |\ \lambda^{d_i}=1\}$.
\item The number of the Jordan blocks for the eigenvalue $\lambda$ with size $n-1$ in $\Phi_{n-1, 0}$ is equal to $\sum_{i \colon \lambda^{e_i}=1} n(\lambda)_i$.
\end{enumerate}
\end{theorem}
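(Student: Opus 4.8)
The plan is to extract the Jordan block data from the equivariant Hodge-theoretic structure on the motivic Milnor fiber $\SS_{f,0}$ of Denef--Loeser. First I would recall that, since $f$ has an isolated singularity at $0$, the reduced cohomology $\tl{H}^\bullet(F_0;\CC)$ is concentrated in degree $n-1$, and that the monodromy $\Phi_{n-1,0}$ is quasi-unipotent; for a fixed eigenvalue $\lambda\neq 1$ the number of Jordan blocks of size $\geq m$ for $\lambda$ is governed by the weight filtration $W_\bullet$ on the $\lambda$-eigenspace $H^{n-1}(F_0;\CC)_\lambda$ via the relative monodromy weight filtration centered at $n-1$: a block of size $m$ corresponds to a length-$m$ string $\Gr^W_{n-1-(m-1)},\Gr^W_{n-1-(m-1)+2},\dots,\Gr^W_{n-1+(m-1)}$, so the count of size-$m$ blocks is $\d \Gr^W_{n-1+m-1} H^{n-1}(F_0;\CC)_\lambda - \d \Gr^W_{n-1+m+1} H^{n-1}(F_0;\CC)_\lambda$. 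In particular size-$n$ blocks are read off from $\Gr^W_{2n-2}H^{n-1}_\lambda$ (the top weight, since weights on $H^{n-1}$ run from $0$ to $2n-2$) and size-$(n-1)$ blocks from $\Gr^W_{2n-3}H^{n-1}_\lambda$ together with $\Gr^W_{2n-2}H^{n-1}_\lambda$.

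Next I would pass to the motivic side. The motivic Milnor fiber $\SS_{f,0}\in \M_{\CC}^{\hat\mu}$, via the equivariant Hodge characteristic, realizes the class of $[\tl{H}^\bullet(F_0;\CC)]$ together with its mixed Hodge structure and $\hat\mu$-action, and by Denef--Loeser's explicit A'Campo-type formula $\SS_{f,0}$ is computed as an alternating sum over the faces of a toric resolution adapted to $\Gamma_+(f)$; in the convenient non-degenerate case this can be rewritten as a sum over the faces $\sigma$ of $\Gamma_+(f)$ whose relative interior meets $\Int(\RR_+^n)$, with contributions built from the torus-orbit varieties $\{f_\sigma = 1\}$ (here $f_\sigma$ is the face polynomial) and their natural $\mu_{d_\sigma}$-actions. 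I would therefore compute the equivariant $E$-polynomial (or just the relevant graded pieces $e^{p,q}$ with the $\lambda$-isotypic refinement) of $\SS_{f,0}$, isolate the $\lambda$-part, and extract the top two weight-graded dimensions. The key point is that only the lowest-dimensional faces contribute to the highest weights: a $0$-dimensional face $q_i$ in $\Int(\RR_+^n)$ contributes $\{f_{q_i}=1\}$, which is (up to torus factors that do not affect the top cohomology) a point with $\mu_{d_i}$-action, feeding exactly one size-$n$ block to every $\lambda$ with $\lambda^{d_i}=1$; a $1$-dimensional face $\gamma_i$ contributes a curve $\{f_{\gamma_i}=1\}$ whose $\lambda$-eigenspace cohomology in top weight is counted by lattice points of $\relint(\Delta_i)$ at the two relevant heights, producing the $n(\lambda)_i$ of statement (ii). Lattice-point counts enter because the (equivariant) Hodge--Deligne numbers of the hypersurface $\{f_\sigma=1\}\subset(\CC^*)^{\d\sigma}$ are, by non-degeneracy and the theory of Newton polyhedra (Danilov--Khovanskii, Denef--Loeser), expressed through the Ehrhart-type data of $\Delta_\sigma$, with the $\hat\mu$-grading recorded by the height function modulo $d_\sigma$.

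Concretely the steps are: (1) reduce the Jordan-form question to computing $\d\Gr^W_{2n-2}H^{n-1}(F_0)_\lambda$ and $\d\Gr^W_{2n-3}H^{n-1}(F_0)_\lambda$ via the monodromy weight filtration; (2) invoke Denef--Loeser's formula to write $\SS_{f,0}$ as an alternating sum over faces of $\Gamma_+(f)$ meeting $\Int(\RR_+^n)$, with equivariant contributions from $\{f_\sigma=1\}$; (3) show that faces of dimension $\geq 2$ contribute nothing to weights $2n-2$ and $2n-3$ of the degree-$(n-1)$ part (a dimension/weight bound: a $d$-dimensional face governs a $d$-dimensional variety whose cohomology, placed in the right degree after the suspension-type shifts in the formula, cannot reach these top weights unless $d\leq 1$); (4) for $d=0$, compute that $q_i$ gives one size-$n$ Jordan block for each $\lambda^{d_i}=1$; (5) for $d=1$, compute the equivariant Hodge numbers of the plane-curve-type hypersurface $\{f_{\gamma_i}=1\}$ with its $\mu_{e_i}$-action using non-degeneracy, and identify the top-weight $\lambda$-eigenspace dimension with $n(\lambda)_i$ as the stated sum of two lattice-point counts in $\relint(\Delta_i)$; (6) assemble (i) and (ii) from (3)--(5) and step (1).

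The main obstacle I anticipate is step (3) together with the bookkeeping in step (5): making precise how the weight filtration on $H^{n-1}(F_0)$ is assembled from the weight filtrations on the pieces appearing in Denef--Loeser's alternating sum, and in particular verifying that the alternating sum does not produce cancellation in the top two weights (so that the face contributions genuinely add up rather than partially cancel). This requires controlling the purity/weight properties of each $\{f_\sigma=1\}$ and its cohomology with compact supports, tracking the Tate twists and cohomological shifts in the formula carefully, and checking that the "$\lambda$ versus $\lambda^{-1}$'' symmetry of the monodromy is correctly reflected in the two-term sum defining $n(\lambda)_i$ (the two heights $k$ and $e_i-k$). The $0$-dimensional and $1$-dimensional faces are combinatorially simple enough that once the weight-tracking is pinned down the lattice-point formulas should fall out from the Newton-polyhedron computation of Hodge--Deligne numbers of nondegenerate hypersurfaces in tori.
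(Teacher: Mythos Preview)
Your proposal is correct and follows essentially the same route as the paper: the result is restated there as Theorem~\ref{thm:8-4} and proved by combining the face-by-face expression for $\chi_h(\SS_{f,0})$ (Theorem~\ref{thm:8-3}), the translation of Jordan-block counts into top-weight equivariant Hodge numbers (Theorem~\ref{MF}), and the Danilov--Khovanskii-type lattice-point formulas for $e^{0,0}(Z^*_{\Delta_\gamma})_\lambda$ and $e^{p,0}(Z^*_{\Delta_\gamma})_\lambda$ (Propositions~\ref{prp:new} and \ref{prp:2-19}). One small point: in the paper the face sum runs over \emph{all} compact faces $\gamma\subset\Gamma_f$, each weighted by $(1-\LL)^{m_\gamma}$ with $m_\gamma=\sharp S_\gamma-\dim\gamma-1$; the restriction to faces with $\relint(\gamma)\subset\Int(\RR_+^n)$ is not imposed at the outset but emerges when you extract the top two weights, since only faces with $\sharp S_\gamma=n$ can contribute there---this is exactly the mechanism of your step~(3) and also explains why no alternating-sum cancellation occurs at those weights.
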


\noindent Namely the Jordan blocks for the eigenvalues $\lambda \neq 1$ in the monodromy $\Phi_{n-1, 0}$ are determined by the lattice distances of the faces of $\Gamma_{+}(f)$ from the origin $0 \in \RR^n$. The monodromy theorem asserts also that the sizes of the Jordan blocks for the eigenvalue $1$ in $\Phi_{n-1, 0}$ are bounded by $n-1$. In this case, we have the following result. Denote by $\Pi_f$ the number of the lattice points on the $1$-skeleton of $\partial \Gamma_{+}(f) \cap \Int (\RR^n_+)$. For a compact face $\gamma \prec \Gamma_{+}(f)$, denote by $l^*(\gamma)$ the number of the lattice points on the relative interior $\relint(\gamma)$ of $\gamma$.

\begin{theorem}\label{thm:1-2}
In the situation of Theorem \ref{thm:1-1} 
we have 
\begin{enumerate}
\item {\rm (van Doorn-Steenbrink \cite{D-St})} The number of the Jordan blocks for the eigenvalue $1$ with the maximal possible size $n-1$ in $\Phi_{n-1, 0}$ is $\Pi_f$.
\item The number of the Jordan blocks for the eigenvalue $1$ with size $n-2$ in $\Phi_{n-1, 0}$ is equal to $2 \sum_{\gamma} l^*(\gamma)$, where $\gamma$ ranges through the compact faces of $\Gamma_{+}(f)$ such that $\d \gamma =2$ and $\relint(\gamma) \subset \Int (\RR^n_+)$. In particular, this number is even.
\end{enumerate}
\end{theorem}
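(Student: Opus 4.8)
The plan is to deduce Theorem~\ref{thm:1-2} from the computation of the equivariant mixed Hodge numbers of the motivic Milnor fiber, exactly as in the proof of Theorem~\ref{thm:1-1}, but now concentrating on the unipotent ($\lambda=1$) part of the cohomology. Recall that the Jordan blocks of size $r$ for the eigenvalue $1$ in $\Phi_{n-1,0}$ are counted by the dimensions of the graded pieces of the weight filtration relative to the logarithm $N$ of the unipotent part of the monodromy acting on $H^{n-1}(F_0;\CC)_1$: a block of size $r$ contributes $1$ to $\dim \Gr^W_{n-1+r-1}\cap \Gr^W_{n-1-r+1}$ via the primitive part of the Lefschetz-type $\mathfrak{sl}_2$-decomposition. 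Concretely, the number of size-$(n-1)$ blocks is $\dim \Gr^W_{2n-3}H^{n-1}(F_0;\CC)_1$ and the number of size-$(n-2)$ blocks is $\dim \Gr^W_{2n-4}H^{n-1}(F_0;\CC)_1 - \dim\Gr^W_{2n-2}H^{n-1}(F_0;\CC)_1$, the last term vanishing because the eigenvalue-$1$ monodromy has blocks of size at most $n-1$. So everything reduces to computing the top two graded weight pieces $\Gr^W_{2n-3}$ and $\Gr^W_{2n-4}$ on the unipotent part.

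Next I would invoke the A'Campo/Denef--Loeser type formula that expresses the (equivariant) Hodge--Deligne $E$-polynomial of $H^*(F_0;\CC)$ in terms of the Newton polyhedron: under convenience and non-degeneracy the mixed Hodge structure is built from the cohomology of the hypersurfaces $\{g_\gamma = 0\}$ attached to the compact faces $\gamma \prec \Gamma_+(f)$, twisted and glued according to the combinatorics of $\Gamma_+(f)$, as developed in Sections~2--3. The weight $2n-3$ part of the unipotent cohomology is governed by the divisorial part of a resolution obtained by a toric modification subordinate to $\Gamma_+(f)$, whose exceptional divisors correspond to the lattice points on the $1$-skeleton of $\partial\Gamma_+(f)\cap\Int(\RR^n_+)$; counting the top weight of $H^{n-1}$ of the corresponding (iterated) normal crossing strata yields exactly $\Pi_f$, which reproves the van Doorn--Steenbrink formula (i). For (ii), the weight $2n-4$ piece picks up one more ``interior lattice point'' of codimension-two strata: it is controlled by the $2$-dimensional compact faces $\gamma$ with $\relint(\gamma)\subset\Int(\RR^n_+)$, and each interior lattice point of such a $\gamma$ contributes to two adjacent strata in the normal crossing configuration (the two toric divisors meeting along it), giving the factor $2\sum_\gamma l^*(\gamma)$; the evenness is then automatic.

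In carrying this out, the key steps in order are: (1) translate ``number of Jordan blocks of size $r$ for eigenvalue $1$'' into differences of dimensions of $\Gr^W$ on the unipotent part, using the monodromy weight filtration and the bound $r\le n-1$; (2) express these $\Gr^W$ dimensions through the equivariant $E$-polynomial of the motivic Milnor fiber computed in the earlier sections; (3) identify the contribution to $\Gr^W_{2n-3}$ with lattice points on the $1$-skeleton (giving $\Pi_f$); (4) identify the contribution to $\Gr^W_{2n-4}$ with the interior lattice points of the $2$-dimensional compact faces, tracking the multiplicity $2$ coming from the two toric divisors through each such point, and check that no other faces contribute at this weight.

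I expect the main obstacle to be step (4): isolating precisely which faces of $\Gamma_+(f)$ feed into the weight-$(2n-4)$ graded piece of the degree-$(n-1)$ unipotent cohomology, and verifying that the naive count of interior lattice points of $2$-faces is neither over- nor under-counted once the spectral sequence / Mayer--Vietoris gluing of the toric strata is taken into account. This requires controlling possible cancellations between the $E_1$-terms of the weight spectral sequence associated to the normal crossing resolution — in particular ruling out contributions from higher-dimensional faces and from the non-unipotent/eigenvalue-$\neq 1$ directions that could, a priori, mix in at this weight — and confirming that the relevant differentials vanish on the top graded pieces. The convenience hypothesis should be what guarantees this vanishing, by forcing the relevant strata at infinity to be ``as nondegenerate as possible.''
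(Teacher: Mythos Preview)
Your overall strategy---reduce to dimensions of graded weight pieces via Theorem~\ref{MF} and then compute these through the motivic Milnor fiber decomposition of Theorem~\ref{thm:8-3}---is exactly how the paper begins. But the paper does \emph{not} carry out the direct geometric analysis you sketch in steps (3)--(4). Instead it proves an algebraic identity (Theorem~\ref{thm:8} via Theorem~\ref{thm:7-6-1} and Proposition~\ref{prp:7-6-2}): the combination $\sum_{\gamma}e(\chi_h((1-\LL)^{m_\gamma+1}([Z^*_{\Delta_\gamma}]+[Z^*_\gamma])))_1$ collapses to $1-(t_1t_2)^n$, which shows that the eigenvalue-$1$ count for blocks of size $\geq k$ is given by the \emph{same} sum $\sum_\gamma e^{p,q}(\chi_h((1-\LL)^{m_\gamma}[Z^*_{\Delta_\gamma}]))_1$ as for $\lambda\neq 1$, but now over $p+q=n-2-k,\,n-1-k$ rather than $n-2+k,\,n-1+k$. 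After this reduction, parts (i) and (ii) fall out immediately from Propositions~\ref{prp:2-19} and~\ref{prp:new} applied at $p+q=0$ and $p+q=1$; in particular the factor $2$ is simply Hodge symmetry $e^{1,0}(Z^*)_1=e^{0,1}(Z^*)_1$, not a stratum-counting phenomenon.

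Your geometric picture contains a concrete misidentification that would block the argument as written. The exceptional divisors $D_i$ of the toric resolution correspond to rays of the subdivided dual fan, hence (before subdivision) to \emph{facets} of $\Gamma_+(f)$, not to lattice points on the $1$-skeleton; likewise, an interior lattice point of a $2$-dimensional face $\gamma$ is not the locus where two toric divisors meet. The quantity $\Pi_f$ and the numbers $l^*(\gamma)$ enter not through the incidence combinatorics of the $D_I$ but through the equivariant Ehrhart-type polynomials $\varphi_{1,i}(\Delta_\gamma)$ that govern $e^{p,q}(Z^*_{\Delta_\gamma})_1$ (Theorem~\ref{thm:2-14}, Propositions~\ref{prp:new} and~\ref{prp:2-19}). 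So your step (4) as stated---reading off $2\sum_\gamma l^*(\gamma)$ from ``two divisors through each interior point''---does not correspond to anything in the resolution, and the cancellation worries you flag are real: without the identity of Proposition~\ref{prp:7-6-2} (whose proof uses the auxiliary prime polytope $\Box_{\gamma''}$, Poincar\'e duality for $\overline{Z^*_{\Delta_\gamma}}$, and Lemma~\ref{lem:7-6-9}) you have no mechanism to kill the contributions of higher-dimensional faces to the relevant weight pieces.
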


\noindent Note that Theorem \ref{thm:1-2} (i) was previously obtained in van Doorn-Steenbrink \cite{D-St} by using different methods. Roughly speaking, the nilpotent part for the eigenvalue $1$ in the monodromy $\Phi_{n-1, 0}$ is determined by the convexity of the hypersurface $\partial \Gamma_{+}(f) \cap \Int (\RR^n_+)$. Thus Theorems \ref{thm:1-1} and \ref{thm:1-2} generalize the well-known fact that the monodromies of quasi-homogeneous polynomials are semisimple. In fact, by our results in Sections \ref{sec:2} and \ref{sec:4} a general algorithm for computing all the spectral pairs of the Milnor fiber $F_0$ is obtained. This in particular implies that we can compute the Jordan normal form of $\Phi_{n-1, 0}$ completely. Note that the spectrum of $F_0$ obtained in Saito \cite{Saito-3} and Varchenko-Khovanskii \cite{K-V} is not enough to deduce the Jordan normal form. Moreover, if any compact face of $\Gamma_{+}(f)$ is prime (see Definition \ref{dfn:2-16}) we obtain also a closed formula for the Jordan normal form. See Section \ref{sec:4} for the details. 

This paper is organized as follows. In Section \ref{sec:2}, we introduce some generalizations of the results of Danilov-Khovanskii \cite{D-K} obtained in \cite{M-T-3}. By them we obtain a general algorithm for computing the equivariant mixed Hodge numbers of non-degenerate toric hypersurfaces. In Section \ref{sec:3}, we recall some basic definitions and results on motivic Milnor fibers introduced by Denef-Loeser \cite{D-L-1} and \cite{D-L-2}. Then in Section \ref{sec:4}, by rewriting them in terms of the Newton polyhedron $\Gamma_{+}(f)$ with the help of the results in Section \ref{sec:2} and \cite{M-T-3}, we prove various combinatorial formulas for the Jordan normal form of the Milnor monodromy $\Phi_{n-1, 0}$. Although our proof for the eigenvalue $1$ in this paper is very different from the one in \cite{M-T-3}, our results in Section \ref{sec:4} are completely parallel to those for monodromies at infinity obtained in \cite{M-T-3}. We thus find a striking symmetry between local and global. Finally, let us mention that in \cite{E-T} the results for the other eigenvalues $\lambda \not= 1$ in this paper were already generalized to the monodromies over complete intersection subvarieties in $\CC^n$.

\section{Preliminary notions and results}\label{sec:2}

In this section, we recall our results in \cite[Section 2]{M-T-3} which will be used in this paper. They are slight generalizations of the results in Danilov-Khovanskii \cite{D-K}. 

\begin{definition}\label{dfn:2-6-2}
Let $g(x)=\sum_{v \in \ZZ^n} a_vx^v$ ($a_v\in \CC$) be a Laurent polynomial on $(\CC^*)^n$.
\begin{enumerate}
\item We call the convex hull of $\supp(g):=\{v\in \ZZ^n \ |\ a_v\neq 0\} \subset \ZZ^n $ in $\RR^n$ the Newton polytope of $g$ and denote it by $NP(g)$.
\item For $u\in (\RR^n)^*$, we set $\Gamma(g;u):=\left\{ v\in NP(g)\ \left| \ \langle u,v\rangle =\min_{w\in NP(g)} \langle u,w\rangle \right.\right\}$.
\item For $u \in (\RR^n)^*$, we define the $u$-part of $g$ by $g^u(x):=\sum_{v \in \Gamma(g;u)} a_vx^v$.
\end{enumerate}
\end{definition}

\begin{definition}[\cite{Kushnirenko}]
Let $g$ be a Laurent polynomial on $(\CC^*)^n$. Then we say that the hypersurface $Z^*=\{ x\in (\CC^*)^n \ |\ g(x)=0 \}$ of $(\CC^*)^n$ is non-degenerate if for any $u \in (\RR^n)^*$ the hypersurface $\{ x\in (\CC^*)^n \ |\ g^u(x)=0 \}$ is smooth and reduced.
\end{definition}

In the sequel, let us fix an element $\tau =(\tau_1,\ldots, \tau_n) \in T:=(\CC^*)^n$ and let $g$ be a Laurent polynomial on $(\CC^*)^n$ such that $Z^*=\{ x\in (\CC^*)^n \ |\ g(x)=0\}$ is non-degenerate and invariant by the automorphism $l_{\tau} \colon (\CC^*)^n \underset{\tau \times}{\simto}(\CC^*)^n$ induced by the multiplication by $\tau$. Set $\Delta =NP(g)$ and for simplicity assume that $\d \Delta=n$. Then there exists $\beta \in \CC$ such that $l_{\tau}^*g= g \circ l_{\tau}=\beta g$. This implies that for any vertex $v$ of $\Delta =NP(g)$ we have ${\tau}^v={\tau}_1^{v_1} \cdots {\tau}_n^{v_n}=\beta$. Moreover by the condition $\d \Delta=n$ we see that $\tau_1, \tau_2, \ldots , \tau_n$ are roots of unity. For $p,q \geq 0$ and $k \geq 0$, let $h^{p,q}(H_c^k(Z^*;\CC))$ be the mixed Hodge number of $H_c^k(Z^*;\CC)$ and set
\begin{equation}
e^{p,q}(Z^*)=\dsum_k (-1)^k h^{p,q}(H_c^k(Z^*;\CC))
\end{equation}
as in \cite{D-K}. The above automorphism of $(\CC^*)^n$ induces a morphism of mixed Hodge structures $l_{\tau}^* \colon H_c^k(Z^*;\CC) \simto H_c^k(Z^*;\CC)$ and hence $\CC$-linear automorphisms of the $(p,q)$-parts $H_c^k(Z^*;\CC)^{p,q}$ of $H_c^k(Z^*;\CC)$. For $\alpha \in \CC$, let $h^{p,q}(H_c^k(Z^*;\CC))_{\alpha}$ be the dimension of the $\alpha$-eigenspace $H_c^k(Z^*;\CC)_{\alpha}^{p,q}$ of this automorphism of $H_c^k(Z^*;\CC)^{p,q}$ and set
\begin{equation}
e^{p,q}(Z^*)_{\alpha}=\dsum_k (-1)^k h^{p,q}(H_c^k(Z^*;\CC))_{\alpha}.
\end{equation}
We call $e^{p,q}(Z^*)_{\alpha}$ the equivariant mixed Hodge numbers of $Z^*$. Since we have $l_{\tau}^r =\id_{Z^*}$ for some $r \gg 0$, these numbers are zero unless $\alpha$ is a root of unity. Obviously we have
\begin{equation}
e^{p,q}(Z^*)=\dsum_{\alpha \in \CC} e^{p,q}(Z^*)_{\alpha}, \qquad 
e^{p,q}(Z^*)_{\alpha}=e^{q,p}(Z^*)_{\overline{\alpha}}.
\end{equation}
In this setting, along the lines of Danilov-Khovanskii \cite{D-K} we can give an algorithm for computing these numbers $e^{p,q}(Z^*)_{\alpha}$ as follows. First of all, as in \cite[Section 3]{D-K} we have the following result.

\begin{proposition}\label{prp:2-15} 
{\bf (\cite[Proposition 2.6]{M-T-3})} For $p,q \geq 0$ such that $p+q >n-1$, we have
\begin{equation}
e^{p,q}(Z^*)_{\alpha}=
\begin{cases}
(-1)^{n+p+1}\binom{n}{p+1} & (\text{$\alpha=1$ and $p=q$}),\\
\hspace*{10mm}0 & (\text{otherwise}),
\end{cases}
\end{equation}
(we used the convention $\binom{a}{b}=0$ ($0 \leq a <b$) for binomial coefficients).
\end{proposition}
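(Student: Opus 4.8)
The plan is to follow the strategy of Danilov--Khovanskii \cite[\S3]{D-K}: reduce, via Poincar\'e duality on the smooth affine variety $Z^*$, to a statement about ordinary cohomology below the middle degree, control that cohomology by a Lefschetz-type theorem for non-degenerate hypersurfaces in the torus, and carry the $l_\tau$-action along at every step. First I would record that, since $Z^*\subset(\CC^*)^n$ is smooth of pure dimension $n-1$, Poincar\'e duality gives isomorphisms of mixed Hodge structures $H_c^k(Z^*;\CC)\cong H^{2n-2-k}(Z^*;\CC)^{\vee}(-(n-1))$ which are compatible with the finite order automorphism $l_\tau$ up to replacing the eigenvalue $\alpha$ by $\overline{\alpha}=\alpha^{-1}$ --- harmless for the statement at hand. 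Taking $(p,q)$-parts and $\alpha$-eigenspaces and forming the alternating sum in $k$, this gives $e^{p,q}(Z^*)_\alpha=\widetilde{e}^{\,n-1-p,\,n-1-q}(Z^*)_{\overline{\alpha}}$, where $\widetilde{e}^{\,a,b}(W)_\beta:=\sum_k(-1)^k\dim H^k(W;\CC)^{a,b}_{\beta}$. Since $p+q>n-1$ forces $(n-1-p)+(n-1-q)<n-1$, it then suffices to show that $\widetilde{e}^{\,a,b}(Z^*)_\beta$ equals $(-1)^a\binom{n}{a}$ when $a=b$ and $\beta=1$, and vanishes otherwise, whenever $a+b<n-1$.

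Next I would exploit affineness and smoothness of $Z^*$ to locate the relevant cohomology. As $Z^*$ is affine of dimension $n-1$, $H^k(Z^*;\CC)=0$ for $k>n-1$; and since $Z^*$ is smooth, $H^k(Z^*;\CC)$ has weights $\ge k$, so its $(a,b)$-part (of weight $a+b$) can be nonzero only for $(a+b)/2\le k\le a+b$. When $a+b<n-1$ all these degrees satisfy $k<n-1$, and for $k<n-1$ the restriction map $H^k((\CC^*)^n;\CC)\to H^k(Z^*;\CC)$ is an isomorphism of mixed Hodge structures --- this is the Lefschetz-type theorem for non-degenerate toric hypersurfaces of Danilov--Khovanskii, proved by passing to a smooth projective toric compactification on which the closure $\overline{Z}$ of $Z^*$ stays non-degenerate and meets every orbit transversally, applying classical weak Lefschetz to get $H^k(X_\Sigma;\CC)\cong H^k(\overline{Z};\CC)$ for $k<n-1$, and comparing the boundary strata of $\overline{Z}$ (themselves non-degenerate hypersurfaces in smaller tori) by descending induction, exactly as in \cite[\S3]{D-K} and, equivariantly, in \cite{M-T-3}. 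Because the invariant forms $dx_i/x_i$ are fixed by $l_\tau^*$, the action on $H^*((\CC^*)^n;\CC)$ is trivial, hence, by equivariance of the restriction map, so is the action on $H^k(Z^*;\CC)$ for every $k<n-1$; and $H^k((\CC^*)^n;\CC)$ is pure of Hodge type $(k,k)$ with dimension $\binom{n}{k}$.

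Combining the two steps: for $a+b<n-1$ every nonzero term in $\widetilde{e}^{\,a,b}(Z^*)_\beta$ comes from an $H^k(Z^*;\CC)\cong H^k((\CC^*)^n;\CC)$ with $k<n-1$, which is pure of type $(k,k)$ with trivial $l_\tau$-action, so the sum is $0$ unless $a=b$ and $\beta=1$, in which case only $k=a$ contributes and $\widetilde{e}^{\,a,a}(Z^*)_1=(-1)^a\binom{n}{a}$. Feeding this back through the first step with $a=n-1-p$, we get that $e^{p,q}(Z^*)_\alpha=0$ unless $\alpha=1$ and $p=q$, and then $e^{p,p}(Z^*)_1=(-1)^{n-1-p}\binom{n}{n-1-p}=(-1)^{n+p+1}\binom{n}{p+1}$, using $\binom{n}{n-1-p}=\binom{n}{p+1}$ and that the exponents $n-1-p$ and $n+p+1$ differ by the even number $2p+2$; the convention $\binom{n}{p+1}=0$ for $p+1>n$ is then respected automatically, matching the claim.

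The one genuinely substantive ingredient, and the place where I expect the work to lie, is the Lefschetz isomorphism $H^k((\CC^*)^n;\CC)\simto H^k(Z^*;\CC)$ for $k<n-1$ together with its compatibility with the mixed Hodge structures and the $l_\tau$-action: this needs a suitable toric compactification keeping $\overline{Z}$ non-degenerate (available after refining the normal fan of $NP(g)$), the verification that $\overline{Z}\setminus Z^*$ is a normal crossing divisor whose closed strata are non-degenerate hypersurfaces in lower-dimensional tori, and the inductive comparison of these strata with the toric strata. Everything else --- Poincar\'e duality, the weight estimates, the purity and Hodge--Tate type of torus cohomology, and the bookkeeping of $l_\tau$-eigenvalues --- is routine, and is precisely the equivariant package assembled in \cite{M-T-3} that we invoke here.
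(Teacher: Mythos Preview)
Your proposal is correct and follows exactly the route the paper indicates: the statement is quoted from \cite[Proposition 2.6]{M-T-3} without proof, and both that reference and the paper point to \cite[\S3]{D-K}, whose argument is precisely the Poincar\'e-duality-plus-Lefschetz reduction you carry out, with the $l_\tau$-equivariance tracked via the invariance of the forms $dx_i/x_i$. The bookkeeping --- the weight bound confining contributions to $k<n-1$, the identification $H^k(Z^*;\CC)\cong H^k((\CC^*)^n;\CC)$ there, and the sign/binomial match at the end --- is all accurate.
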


For a vertex $w$ of $\Delta$, consider the translated polytope $\Delta^w:=\Delta -w$ such that $0 \prec \Delta^w$ and ${\tau}^v=1$ for any vertex $v$ of $\Delta^w$. Then for $\alpha \in \CC$ and $k \geq 0$ set
\begin{equation}
l^*(k\Delta)_{\alpha}=\sharp \{ v \in \Int (k\Delta^w) \cap \ZZ^n \ |\ {\tau}^v =\alpha\} \in \ZZ_+:=\ZZ_{\geq 0}.
\end{equation}
We can easily see that these numbers $l^*(k\Delta)_{\alpha}$ do not depend on the choice of the vertex $w$ of $\Delta$. We define a formal power series $P_{\alpha}(\Delta;t)=\sum_{i \geq 0}\varphi_{\alpha, i}(\Delta)t^i$ by
\begin{equation}
P_{\alpha}(\Delta;t)=(1-t)^{n+1} \left\{ \dsum_{k \geq 0} l^*(k\Delta)_{\alpha}t^k\right\}.
\end{equation}
Then we can easily show that $P_{\alpha}(\Delta;t)$ is actually a polynomial as in \cite[Section 4.4]{D-K}. 

\begin{theorem}\label{thm:2-14}
{\bf (\cite[Theorem 2.7]{M-T-3})} In the situation as above, we have
\begin{equation}
\dsum_q e^{p,q}(Z^*)_{\alpha}
=\begin{cases}
(-1)^{p+n+1}\binom{n}{p+1} +(-1)^{n+1} \varphi_{\alpha, n-p}(\Delta) & (\alpha=1), \\
(-1)^{n+1} \varphi_{\alpha, n-p}(\Delta) & (\alpha \neq 1).
\end{cases}
\end{equation}
\end{theorem}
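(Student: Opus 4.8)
The plan is to follow the method of Danilov--Khovanskii \cite{D-K} as adapted in \cite{M-T-3}, computing the sum $\sum_q e^{p,q}(Z^*)_{\alpha}$ by an induction that peels off the boundary strata of the toric variety associated with the polytope $\Delta$. First I would choose a vertex $w$ of $\Delta$ and replace $\Delta$ by $\Delta^w = \Delta - w$, so that $0 \prec \Delta^w$ and $\tau^v = 1$ for every vertex $v$; since all the numbers $l^*(k\Delta)_{\alpha}$, and hence $P_{\alpha}(\Delta;t)$, are translation-invariant, this costs nothing, and it lets me work with a $\ZZ^n$-graded coordinate ring on which $l_{\tau}$ acts diagonally through the character $v \mapsto \tau^v$. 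Let $X_{\Delta}$ be the projective toric variety attached to $\Delta$ and $\overline{Z}$ the closure of $Z^*$ in $X_{\Delta}$; the automorphism $l_{\tau}$ extends to $X_{\Delta}$ and preserves $\overline{Z}$ together with its intersections with the torus orbits.

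The key decomposition is the orbit stratification $X_{\Delta} = \bigsqcup_{\Gamma \prec \Delta} T_{\Gamma}$ indexed by the faces $\Gamma$ of $\Delta$, which gives $\overline{Z} = \bigsqcup_{\Gamma} (\overline{Z} \cap T_{\Gamma})$, and each piece $\overline{Z} \cap T_{\Gamma}$ is (by nondegeneracy) a nondegenerate hypersurface in the torus $T_{\Gamma} \simeq (\CC^*)^{\d \Gamma}$ defined by the face-polynomial $g^u$ with Newton polytope $\Gamma$. Taking equivariant $e$-polynomials, additivity over this stratification yields $e^{p,q}(\overline{Z})_{\alpha} = \sum_{\Gamma \prec \Delta} e^{p,q}(\overline{Z}\cap T_{\Gamma})_{\alpha}$. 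On the other side, the same computation as in \cite[Section~5]{D-K} expresses the generating function $\sum_k l^*(k\Delta)_{\alpha} t^k$ via $P_{\alpha}(\Delta;t)$, and the Ehrhart-type reciprocity together with the combinatorial identity $\sum_{\Gamma \prec \Delta} (\text{something for } \Gamma) = (\text{something for } \Delta)$ matches the alternating sum over faces. Concretely, I would first establish, by induction on $\d \Delta$ using Proposition~\ref{prp:2-15} to control the range $p+q > n-1$, a formula for $\sum_{p,q} e^{p,q}(\overline{Z})_{\alpha} u^p v^q$ in terms of the $\varphi_{\alpha,i}(\Delta)$, then strip off the contributions of the boundary faces $\Gamma \prec \Delta$ with $\Gamma \neq \Delta$ to isolate $e^{p,q}(Z^*)_{\alpha} = e^{p,q}(\overline{Z}\cap T_{\Delta})_{\alpha}$, and finally sum over $q$.

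In carrying this out, the two halves of the formula ($\alpha = 1$ versus $\alpha \neq 1$) should be handled uniformly: the extra binomial term $(-1)^{p+n+1}\binom{n}{p+1}$ for $\alpha = 1$ arises exactly from the part of the cohomology coming from the ambient torus $(\CC^*)^n$ (equivalently, from $H^*_c$ of the torus itself, on which $l_\tau$ acts trivially), which contributes only in the $\alpha = 1$ eigenspace, while the $\varphi_{\alpha,n-p}(\Delta)$ term is the ``primitive'' part detected by the interior lattice points weighted by $\tau^v = \alpha$. This is precisely the equivariant refinement of the Danilov--Khovanskii identity, so the argument is structurally the one in \cite{M-T-3}, and I would cite \cite[Theorem~2.7]{M-T-3} for the combinatorial bookkeeping while indicating the equivariant modifications.

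The main obstacle will be the inductive bookkeeping of the eigenvalue decomposition through the stratification: one must check that $l_{\tau}$ restricted to each orbit $T_{\Gamma}$ still acts by a well-defined character (so that $\tau^v$ makes sense for $v$ ranging over $\Int(k\Gamma) \cap \ZZ^n$ relative to the affine span of $\Gamma$) and that the mixed Hodge structure maps in the Gysin/localization long exact sequences used to glue the strata are equivariant, so that the eigenspace dimensions add correctly. Since $\d \Delta = n$ and the $\tau_i$ are roots of unity, these compatibilities hold, but verifying that the weight filtration and the $\alpha$-eigenspace decomposition are simultaneously respected — and that the purity statements of Danilov--Khovanskii for the relevant cohomology groups survive the refinement — is the delicate point; everything else is the routine Ehrhart/reciprocity computation packaged in the definition of $P_{\alpha}(\Delta;t)$.
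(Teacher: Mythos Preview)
The paper does not give its own proof of this statement: Theorem~\ref{thm:2-14} is simply quoted from \cite[Theorem~2.7]{M-T-3} with no argument supplied here. Your sketch---the equivariant refinement of the Danilov--Khovanskii method via the orbit stratification of $X_{\Delta}$, additivity of $e^{p,q}(\,\cdot\,)_{\alpha}$, and the Ehrhart-type generating function $P_{\alpha}(\Delta;t)$---is exactly the approach the paper attributes to \cite{M-T-3} (see the sentence ``along the lines of Danilov-Khovanskii \cite{D-K} we can give an algorithm\ldots''), so there is nothing to contrast.
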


By Proposition \ref{prp:2-15} and Theorem \ref{thm:2-14} we can now calculate the numbers $e^{p,q}(Z^*)_{\alpha}$ on the non-degenerate hypersurface $Z^* \subset (\CC^*)^n$ for any $\alpha \in \CC$ as in \cite[Section 5.2]{D-K}. Indeed for a projective toric compactification $X$ of $(\CC^*)^n$ such that the closure $\overline{Z^*}$ of $Z^*$ in $X$ is smooth, the variety $\overline{Z^*}$ is smooth projective and hence there exists a perfect pairing
\begin{equation}
H^{p,q}(\overline{Z^*};\CC)_{\alpha} \times H^{n-1-p, n-1-q}(\overline{Z^*};\CC)_{\alpha^{-1}} \longrightarrow \CC
\end{equation}
for any $p,q \geq 0$ and $\alpha \in \CC^*$ (see for example \cite[Section 5.3.2]{Voisin}). Therefore, we obtain equalities $e^{p,q}(\overline{Z^*})_{\alpha}=e^{n-1-p,n-1-q}(\overline{Z^*})_{\alpha^{-1}}$ which are necessary to proceed the algorithm in \cite[Section 5.2]{D-K}. We have also the following analogue of \cite[Proposition 5.8]{D-K}.

\begin{proposition}\label{prp:new}
{\bf (\cite[Proposition 2.8]{M-T-3})} For any $\alpha \in \CC$ and $p> 0$ we have
\begin{equation}
e^{p,0}(Z^*)_{\alpha}=e^{0,p}(Z^*)_{\overline{\alpha}}= (-1)^{n-1} 
\sum_{\begin{subarray}{c} \Gamma \prec \Delta\\ \d \Gamma =p+1\end{subarray}}l^*(\Gamma)_{\alpha}.
\end{equation}
\end{proposition}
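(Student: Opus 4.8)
\emph{Proof proposal.} The equality $e^{p,0}(Z^*)_{\alpha}=e^{0,p}(Z^*)_{\overline{\alpha}}$ is exactly the symmetry $e^{p,q}(Z^*)_{\alpha}=e^{q,p}(Z^*)_{\overline{\alpha}}$ recorded in $(2.3)$, so it remains to prove the formula for $e^{p,0}(Z^*)_{\alpha}$. I would treat the case $p=n-1$ directly from Proposition~\ref{prp:2-15} and Theorem~\ref{thm:2-14}, and the case $1\leq p\leq n-2$ by induction on $n=\d\Delta$, following the orbit-stratification argument of \cite[Section~5]{D-K} with the automorphism $l_{\tau}$ carried along — the only new input being that $l_{\tau}$ multiplies a monomial $x^{v}$ by $\tau^{v}$, so every count of lattice points in \cite{D-K} refines to a count of those $v$ with $\tau^{v}=\alpha$. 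For $p=n-1$, writing $e^{n-1,0}(Z^*)_{\alpha}=\sum_{q}e^{n-1,q}(Z^*)_{\alpha}-\sum_{q\geq1}e^{n-1,q}(Z^*)_{\alpha}$, Theorem~\ref{thm:2-14} evaluates the first sum as $(-1)^{n+1}\varphi_{\alpha,1}(\Delta)$ augmented by $1$ when $\alpha=1$, and $\varphi_{\alpha,1}(\Delta)=l^{*}(\Delta)_{\alpha}$ because $l^{*}(0\cdot\Delta)_{\alpha}=0$ (the polytope $0\cdot\Delta$ is a point, with empty interior), while Proposition~\ref{prp:2-15} evaluates the second sum as $1$ when $\alpha=1$ and $0$ otherwise (the only possibly nonzero term being $e^{n-1,n-1}(Z^*)_{1}=1$); subtracting gives $e^{n-1,0}(Z^*)_{\alpha}=(-1)^{n-1}l^{*}(\Delta)_{\alpha}$, which is the claimed formula since $\Delta$ is the only $n$-dimensional face of $\Delta$.

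For $1\leq p\leq n-2$ I would fix a smooth projective toric compactification $X$ of $(\CC^*)^{n}$ whose fan refines the normal fan of $\Delta$ and for which the closure $\overline{Z^*}$ of $Z^*$ in $X$ is smooth; this is possible by the non-degeneracy of $g$. Since $\tau_{1},\ldots,\tau_{n}$ are roots of unity, $l_{\tau}$ extends to a finite-order algebraic automorphism of $X$ preserving $\overline{Z^*}$ and every torus orbit and acting trivially on the cohomology of every sub-torus, hence acting on all strata of $\overline{Z^*}$ compatibly with their mixed Hodge structures. Each orbit intersection with $\overline{Z^*}$ is, up to a product with a torus, a non-degenerate $l_{\tau}$-invariant hypersurface $Z^{*}_{\Gamma}$ in $(\CC^*)^{\d\Gamma}$ with Newton polytope a face $\Gamma\prec\Delta$; grouping the orbits by the face they face and using that the torus-factor contributions sum to $1$ in each group when $p\geq1$, additivity of equivariant Hodge--Deligne polynomials gives
\[
e^{p,0}(\overline{Z^*})_{\alpha}=e^{p,0}(Z^*)_{\alpha}+\sum_{\begin{subarray}{c}\Gamma\prec\Delta,\ \Gamma\neq\Delta\\ \d\Gamma\geq p+1\end{subarray}}e^{p,0}(Z^{*}_{\Gamma})_{\alpha},
\]
where the condition $\d\Gamma\geq p+1$ appears because the equivariant $e$-polynomial of a smooth variety $V$ has degree $\leq\d V$ in each variable, and for each proper face $\Gamma$ the inductive hypothesis gives $e^{p,0}(Z^{*}_{\Gamma})_{\alpha}=(-1)^{\d\Gamma-1}\sum_{\Gamma'}l^{*}(\Gamma')_{\alpha}$, the inner sum running over faces $\Gamma'\prec\Gamma$ with $\d\Gamma'=p+1$.

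Next I would evaluate $e^{p,0}(\overline{Z^*})_{\alpha}$. By the perfect pairing $(2.8)$ on the smooth projective variety $\overline{Z^*}$ of dimension $n-1$ one has $e^{p,0}(\overline{Z^*})_{\alpha}=e^{n-1-p,n-1}(\overline{Z^*})_{\alpha^{-1}}$; applying additivity once more and noting that a variety of dimension $<n-1$ has vanishing $e^{a,n-1}$, this equals $e^{n-1-p,n-1}(Z^*)_{\alpha^{-1}}$, which is $0$ by Proposition~\ref{prp:2-15} because $(n-1-p)+(n-1)>n-1$ and $n-1-p\neq n-1$ for $1\leq p\leq n-2$. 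Putting $e^{p,0}(\overline{Z^*})_{\alpha}=0$ into the displayed identity and interchanging the order of summation,
\[
e^{p,0}(Z^*)_{\alpha}=-\sum_{\begin{subarray}{c}\Gamma'\prec\Delta\\ \d\Gamma'=p+1\end{subarray}}l^{*}(\Gamma')_{\alpha}\Bigl(\sum_{\begin{subarray}{c}\Gamma'\prec\Gamma\prec\Delta\\ \Gamma\neq\Delta\end{subarray}}(-1)^{\d\Gamma-1}\Bigr),
\]
and the faces of $\Delta$ containing a fixed proper face $\Gamma'$ form the face lattice of a polytope of dimension $n-\d\Gamma'-1$ (with $\Gamma'$ playing the role of the empty face), so the Euler--Poincar\'e relation gives $\sum_{\Gamma'\prec\Gamma\prec\Delta}(-1)^{\d\Gamma}=0$; removing the term $\Gamma=\Delta$ shows the inner sum equals $-(-1)^{n-1}=(-1)^{n}$, whence $e^{p,0}(Z^*)_{\alpha}=(-1)^{n-1}\sum_{\d\Gamma'=p+1}l^{*}(\Gamma')_{\alpha}$, which together with the case $p=n-1$ completes the induction.

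The sign accounting and the Euler--Poincar\'e computation are routine, and the $l_{\tau}$-equivariance of the orbit stratification, of the additivity of equivariant $e$-polynomials, and of the residue and Gysin exact sequences is automatic because $l_{\tau}$ is a finite-order algebraic automorphism respecting the toric structure. The step I expect to require genuine care — and the main obstacle — is organizing the Danilov--Khovanskii identification of the graded pieces of the Hodge filtration with monomial-graded combinatorial data so that it is strictly compatible with the face-by-face stratification while keeping the $l_{\tau}$-eigenspace decomposition visible throughout; once that is set up, the passage from $l^{*}(\,\cdot\,)$ to $l^{*}(\,\cdot\,)_{\alpha}$ is formal, precisely because $l_{\tau}$ acts on $x^{v}$ by $\tau^{v}$.
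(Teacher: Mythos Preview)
The paper does not prove this proposition: it is quoted verbatim from \cite[Proposition~2.8]{M-T-3}, as the boldface citation in the statement indicates, and no argument is given here. So there is no in-paper proof to compare against.

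That said, your approach is sound and is exactly the equivariant refinement of \cite[Proposition~5.8]{D-K} that one would expect \cite{M-T-3} to carry out: the case $p=n-1$ via Theorem~\ref{thm:2-14} and Proposition~\ref{prp:2-15}, and the range $1\le p\le n-2$ by induction on $\dim\Delta$ using a smooth toric compactification, Poincar\'e duality on $\overline{Z^*}$, and the Euler--Poincar\'e relation on the face lattice above a fixed $(p+1)$-face. The only step that deserves an explicit line is the one you gloss as ``the torus-factor contributions sum to $1$'': concretely, for a smooth refinement $\Sigma$ of the normal fan of $\Delta$, the stratum of $\overline{Z^*}\subset X_\Sigma$ over a cone $\sigma$ with associated face $\Gamma$ is $(\CC^*)^{n-\dim\sigma-\dim\Gamma}\times Z^*_\Gamma$, so its contribution to $e^{p,0}$ is $(-1)^{n-\dim\sigma-\dim\Gamma}e^{p,0}(Z^*_\Gamma)_\alpha$, and one needs
\[
\sum_{\substack{\sigma\in\Sigma\\ \relint(\sigma)\subset\relint(N_\Gamma)}}(-1)^{(n-\dim\Gamma)-\dim\sigma}=1,
\]
which follows from the Euler characteristic with compact support of the open cone $\relint(N_\Gamma)$ (equivalently, of the open ball that is its link). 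This is standard and is implicit in \cite[\S5]{D-K}, but it is the one place your sketch is genuinely compressed; once it is stated, the rest of your argument goes through without change.
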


The following result is an analogue of \cite[Corollary 5.10]{D-K}. For $\alpha \in \CC$, denote by $\Pi(\Delta)_{\alpha}$ the number of the lattice points $v=(v_1,\ldots, v_n)$ on the $1$-skeleton of $\Delta^w=\Delta-w$ such that ${\tau}^v=\alpha$, where $w$ is a vertex of $\Delta$.

\begin{proposition}\label{prp:2-19}
{\bf (\cite[Proposition 2.9]{M-T-3})} In the situation as above, for any $\alpha \in \CC^*$ we have
\begin{equation}
e^{0,0}(Z^*)_{\alpha}=
\begin{cases}
(-1)^{n-1} \left(\Pi(\Delta)_{1}-1\right) & (\alpha=1), \\
(-1)^{n-1}  \Pi(\Delta)_{\alpha^{-1}} & (\alpha \neq 1).
\end{cases}
\end{equation}
\end{proposition}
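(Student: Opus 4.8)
The plan is to compute $e^{0,0}(Z^*)_\alpha$ via the additivity of the equivariant compactly supported Hodge--Deligne invariant along the toric orbit stratification of a compactification of $Z^*$, as in the algorithm of \cite[Section 5]{D-K}, thereby reducing the statement to an explicit computation on the edges of $\Delta$ together with a combinatorial identity on the face poset of $\Delta$. Fix a projective toric compactification $X$ of $(\CC^*)^n$ on which $\overline{Z^*}$ is smooth. The automorphism $l_\tau$ extends to $X$ and fixes each torus orbit $O_\Gamma$ (indexed by the faces $\Gamma\preceq\Delta$, with $O_\Delta=(\CC^*)^n$) setwise, hence restricts to each stratum $Z^*_\Gamma:=\overline{Z^*}\cap O_\Gamma$; under a suitable identification $O_\Gamma\cong(\CC^*)^{\d \Gamma}$ this stratum is the non-degenerate hypersurface $\{g^\Gamma=0\}$ with Newton polytope $\Gamma$, carrying the inherited $l_\tau$-action (compatible with the values $\tau^v$ for $v\in\Gamma\cap\ZZ^n$). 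Because $\overline{Z^*}=\bigsqcup_{\Gamma\preceq\Delta}Z^*_\Gamma$ decomposes into $l_\tau$-stable locally closed subsets and the long exact sequences in compactly supported cohomology are $l_\tau$-equivariant exact sequences of mixed Hodge structures, passing to the $\alpha$-eigenspace of the $(0,0)$-part in each degree gives
\[
e^{0,0}(\overline{Z^*})_\alpha=\sum_{\Gamma\preceq\Delta}e^{0,0}(Z^*_\Gamma)_\alpha ,
\]
and the same argument applied to $\{g^\Gamma=0\}$ on $(\CC^*)^{\d \Gamma}$ yields, for every face $\Gamma$, the identity $e^{0,0}(\overline{Z^*_\Gamma})_\alpha=\sum_{\Gamma'\preceq\Gamma}e^{0,0}(Z^*_{\Gamma'})_\alpha$ (the faces $\Gamma'\preceq\Gamma$ being faces of $\Delta$ and $(g^\Gamma)^{\Gamma'}=g^{\Gamma'}$). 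When $\d \Gamma\ge2$ the hypersurface $\{g^\Gamma=0\}$ is connected (cf.\ \cite{D-K}), so $H^0(\overline{Z^*_\Gamma})\cong\CC$ with trivial $l_\tau$-action, while $H^k(\overline{Z^*_\Gamma})$ for $k\ge1$, being pure of weight $k$, has vanishing $(0,0)$-part; hence $e^{0,0}(\overline{Z^*_\Gamma})_\alpha=\delta_{\alpha,1}$, and one can solve for $e^{0,0}(Z^*_\Gamma)_\alpha$ in terms of the lower-dimensional faces.

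The two base cases are $\d \Gamma=0$, where $Z^*_\Gamma=\emptyset$ and $e^{0,0}(Z^*_\Gamma)_\alpha=0$, and $\d \Gamma=1$. For an edge $\Gamma$, Kushnirenko's theorem \cite{Kushnirenko} and non-degeneracy make $\{g^\Gamma=0\}\subset\CC^*$ a set of $\ell$ distinct points, $\ell$ the lattice length of $\Gamma$, on which $l_\tau$ acts by multiplication by $\zeta:=\tau^e$ ($e$ the primitive direction vector of $\Gamma$); this cyclic action of order $m:={\rm ord}(\zeta)$ partitions the points into $\ell/m$ free orbits, so $e^{0,0}(Z^*_\Gamma)_\alpha=h^{0,0}(H^0(Z^*_\Gamma))_\alpha$ equals $\ell/m$ if $\alpha^m=1$ and $0$ otherwise. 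After translating $\Gamma$ so that its two vertices have $\tau$-value $1$, its lattice points carry precisely the values $\zeta^0,\zeta^1,\dots,\zeta^{\ell}$, and a direct count shows that $\Pi(\Gamma)_\alpha-\delta_{\alpha,1}$ equals $\ell/m$ if $\alpha^m=1$ and $0$ otherwise as well; thus $e^{0,0}(Z^*_\Gamma)_\alpha=(-1)^{\d \Gamma-1}\bigl(\Pi(\Gamma)_\alpha-\delta_{\alpha,1}\bigr)$ already holds for $\d \Gamma=1$. This case is exactly the assertion of the Proposition when $n=1$.

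It then remains to prove by induction on $\d \Gamma\ge1$ that $e^{0,0}(Z^*_\Gamma)_\alpha=(-1)^{\d \Gamma-1}\bigl(\Pi(\Gamma)_\alpha-\delta_{\alpha,1}\bigr)$, where now $\Pi(\Gamma)_\alpha$ counts the lattice points on the $1$-skeleton of the translated $\Gamma$ with $\tau$-value $\alpha$; taking $\Gamma=\Delta$ (of dimension $n\ge2$) then gives the Proposition. For $\d \Gamma\ge2$ the first paragraph provides $e^{0,0}(Z^*_\Gamma)_\alpha=\delta_{\alpha,1}-\sum_{\Gamma'\prec\Gamma,\,\d \Gamma'\ge1}e^{0,0}(Z^*_{\Gamma'})_\alpha$ (the zero-dimensional faces contributing $0$); substituting the inductive hypothesis and writing $\Pi(\Gamma')_\alpha=\#\{\text{vertices of }\Gamma'\}\cdot\delta_{\alpha,1}+\sum_{E\prec\Gamma',\,\d E=1}l^*(E)_\alpha$, the required identity collapses to two facts about the face poset of a polytope $P$: the Euler--Poincar\'e relation $\sum_{i\ge0}(-1)^i f_i(P)=1$, and the vanishing $\sum_{E\preceq\Gamma'\preceq\Gamma}(-1)^{\d \Gamma'}=0$ for $E$ a proper face of $\Gamma$ (the interval $[E,\Gamma]$ being the face lattice of a polytope of dimension $\d \Gamma-\d E-1$). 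This sign bookkeeping is the step I expect to require the most care: one must track the alternating signs and check that the $\delta_{\alpha,1}$-contributions coming from the vertex counts cancel the corrections produced by the Euler relation, so that the factor $(-1)^{\d \Gamma+1}=(-1)^{\d \Gamma-1}$ emerges. Finally, writing $\Pi(\Delta)_{\alpha^{-1}}$ rather than $\Pi(\Delta)_{\alpha}$ in the statement is inessential: each edge $E$ of the translated $\Delta$ is symmetric under the reflection in its midpoint and has both endpoints of $\tau$-value $1$, so $l^*(E)_\alpha=l^*(E)_{\alpha^{-1}}$ and hence $\Pi(\Delta)_\alpha=\Pi(\Delta)_{\alpha^{-1}}$; the exponent $-1$ is kept only to match Proposition \ref{prp:new} and the Poincar\'e pairing on $\overline{Z^*}$.
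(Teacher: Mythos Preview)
The paper does not give a proof of this proposition; it is quoted from the authors' earlier paper \cite{M-T-3} as a preliminary result, with the remark that it is ``an analogue of \cite[Corollary~5.10]{D-K}''. Your argument is precisely the equivariant version of the Danilov--Khovanskii proof of that corollary: stratify $\overline{Z^*}$ by torus orbits, use that $e^{0,0}(\overline{Z^*_\Gamma})_\alpha=\delta_{\alpha,1}$ for $\dim\Gamma\ge 2$, compute the edge contributions directly, and close the induction with the Euler relation on the face lattice. This is correct, and it is exactly what the citation to \cite{D-K} signals; the only equivariant ingredients you add are the explicit count of eigenvalues of the cyclic permutation on the zero set over each edge (which is right, since $m\mid\ell$ follows from both endpoints having $\tau$-value $1$) and the observation $\Pi(\Delta)_\alpha=\Pi(\Delta)_{\alpha^{-1}}$, which is immediate from the symmetry $j\mapsto\ell-j$ on each edge. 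The induction step you flag as delicate does go through: writing $\Pi(\Gamma')_\alpha=f_0(\Gamma')\,\delta_{\alpha,1}+\sum_{E\preceq\Gamma',\ \dim E=1}l^*(E)_\alpha$ and swapping the order of summation, the edge terms vanish because $\sum_{E\preceq\Gamma'\preceq\Gamma}(-1)^{\dim\Gamma'}=0$, while the vertex terms combine with the Euler relation to produce exactly the required $\delta_{\alpha,1}$.
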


For a vertex $w$ of $\Delta$, we define a closed convex cone $\Con(\Delta, w)$ by $\Con(\Delta,w)=\{ r \cdot (v -w) \ |\ r \in \RR_+, \ v \in \Delta\} \subset \RR^n$.

\begin{definition}\label{dfn:7-6-4}{\bf (\cite{D-K})}
Let $\Delta$ and $\Delta^{\prime}$ be two $n$-dimensional integral polytopes in $(\RR^n, \ZZ^n)$. We denote by ${\rm som}(\Delta )$ (resp. ${\rm som}(\Delta^{\prime})$) the set of vertices of $\Delta$ (resp. $\Delta^{\prime}$). Then we say that $\Delta^{\prime}$ majorizes $\Delta$ if there exists a map $\Psi \colon {\rm som}(\Delta^{\prime}) \longrightarrow {\rm som}(\Delta)$ such that $\Con(\Delta, \Psi(w)) \subset \Con(\Delta^{\prime}, w)$ for any vertex $w$ of $\Delta^{\prime}$.
\end{definition}

For an integral polytope $\Delta$ in $(\RR^n, \ZZ^n)$, we denote by $X_{\Delta}$ the toric variety associated with the dual fan of $\Delta$ (see Fulton \cite{Fulton} and Oda \cite{Oda} etc.). Recall that if $\Delta^{\prime}$ majorizes $\Delta$ there exists a natural morphism $X_{\Delta^{\prime}} \longrightarrow X_{\Delta}$.

\begin{proposition}\label{prp:7-6-5}
{\bf (\cite[Proposition 2.12]{M-T-3})} Let $\Delta$ and $Z^*_{\Delta}=Z^*$ with an action of $l_{\tau}$ be as above. Assume that an $n$-dimensional integral polytope $\Delta^{\prime}$ in $(\RR^n, \ZZ^n)$ majorizes $\Delta$ by the map $\Psi \colon {\rm som}(\Delta^{\prime}) \longrightarrow {\rm som}(\Delta)$. Then for the closure $\overline{Z^*}$ of $Z^*$ in $X_{\Delta^{\prime}}$ we have
\begin{eqnarray}
\sum_q e^{p,q}(\overline{Z^*})_1
&=&\sum_{\Gamma\prec \Delta^{\prime}} (-1)^{\d \Gamma+p+1} \left\{\binom{\d \Gamma}{p+1}-\binom{\dd_{\Gamma}}{p+1}\right\}\nonumber \\
& & +\sum_{\Gamma \prec \Delta^{\prime}}(-1)^{\d \Gamma +1}\sum_{i=0}^{\min\{\dd_{\Gamma},p\}}\binom{\dd_{\Gamma}}{i}(-1)^i \varphi_{1,\d \Psi(\Gamma)-p+i}(\Psi(\Gamma)),\label{eq:7-6-5}
\end{eqnarray}
where for $\Gamma \prec \Delta^{\prime}$ we set $\dd_{\Gamma}=\d \Gamma -\d \Psi(\Gamma)$.
\end{proposition}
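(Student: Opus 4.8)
The plan is to combine the majorization morphism $\pi\colon X_{\Delta'}\longrightarrow X_{\Delta}$ with the stratification of $X_{\Delta'}$ by torus orbits, computing the equivariant $e$-polynomial of $\overline{Z^*}$ by additivity over that stratification and the fibers of $\pi$. First I would recall that $X_{\Delta'}=\bigsqcup_{\Gamma\prec\Delta'} T_{\Gamma}$, where for each face $\Gamma$ the orbit $T_{\Gamma}\cong(\CC^*)^{\d\Gamma}$, and that the restriction of $\pi$ to $T_\Gamma$ is a fibration onto the orbit $T_{\Psi(\Gamma)}\subset X_{\Delta}$ associated with the face $\Psi(\Gamma)$ of $\Delta$, with fiber $(\CC^*)^{\dd_\Gamma}$ where $\dd_\Gamma=\d\Gamma-\d\Psi(\Gamma)$. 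The closure $\overline{Z^*}$ meets $T_\Gamma$ in $\overline{Z^*}\cap T_\Gamma$, which is (up to the fibration) the product of $Z^*_{\Psi(\Gamma)}$ — the non-degenerate hypersurface defined by the $\Gamma$-part $g^{u}$ for $u$ in the relative interior of the cone of $\Gamma$, living in $T_{\Psi(\Gamma)}$ — with the torus fiber $(\CC^*)^{\dd_\Gamma}$, when $\d\Psi(\Gamma)\geq 1$; and when $\Psi(\Gamma)$ is a vertex, $\overline{Z^*}\cap T_\Gamma$ is just a product $(\CC^*)^{\dd_\Gamma}$ intersected appropriately, contributing only torus terms. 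One must check $l_\tau$ acts compatibly with this decomposition so the eigenvalue-$1$ part is extracted throughout.

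The key computational step is the multiplicativity of equivariant $e$-polynomials for the locally trivial fibration $\overline{Z^*}\cap T_\Gamma\longrightarrow Z^*_{\Psi(\Gamma)}$ with fiber $(\CC^*)^{\dd_\Gamma}$. Since $l_\tau$ acts trivially on the fiber directions after the translation normalization (all $\tau^v=1$ on vertices of the translated polytope), the equivariant $e$-polynomial of the fiber $(\CC^*)^{\dd_\Gamma}$ is concentrated in the $\alpha=1$ part and equals $(t-1)^{\dd_\Gamma}$ in the usual Hodge variable, i.e.\ $\sum_i\binom{\dd_\Gamma}{i}(-1)^{\dd_\Gamma-i}(uv)^i$ when tracking $(p,q)$-weights. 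Then $\sum_q e^{p,q}(\overline{Z^*}\cap T_\Gamma)_1$ is obtained by convolving the $E$-polynomial of $Z^*_{\Psi(\Gamma)}$ (for eigenvalue $1$), which is controlled by Theorem \ref{thm:2-14} and Proposition \ref{prp:2-15} via the coefficients $\varphi_{1,j}(\Psi(\Gamma))$, with the binomial expansion of the fiber. Carrying out this convolution and then summing over all faces $\Gamma\prec\Delta'$, one separates the contribution into a ``purely toric/binomial'' piece — giving the first sum in \eqref{eq:7-6-5} involving $\binom{\d\Gamma}{p+1}-\binom{\dd_\Gamma}{p+1}$ — and a piece carrying the genuine hypersurface data, giving the second sum with $\varphi_{1,\d\Psi(\Gamma)-p+i}(\Psi(\Gamma))$.

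In more detail, for each $\Gamma$ with $\d\Psi(\Gamma)\geq 1$ one uses $\sum_{q}e^{p,q}(Z^*_{\Psi(\Gamma)})_1$ from Theorem \ref{thm:2-14}, which has the shape $(-1)^{\d\Psi(\Gamma)+p+1}\binom{\d\Psi(\Gamma)}{p+1}+(-1)^{\d\Psi(\Gamma)+1}\varphi_{1,\d\Psi(\Gamma)-p}(\Psi(\Gamma))$ (adjusting $n$ to the dimension $\d\Psi(\Gamma)$ of the ambient torus $T_{\Psi(\Gamma)}$), and convolves it against $(t-1)^{\dd_\Gamma}$; the binomial-convolution identity $\sum_{i+j=p+1}\binom{\d\Psi(\Gamma)}{j}\binom{\dd_\Gamma}{i}=\binom{\d\Gamma}{p+1}$ (Vandermonde, since $\d\Gamma=\d\Psi(\Gamma)+\dd_\Gamma$) is what makes the first sum collapse to the stated form, while the $\binom{\dd_\Gamma}{p+1}$ correction term absorbs the case $\d\Psi(\Gamma)=0$, where $Z^*_{\Psi(\Gamma)}$ degenerates and only the fiber torus $(\CC^*)^{\dd_\Gamma}$ contributes $(t-1)^{\dd_\Gamma}$. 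Finally one checks signs: the global sign $(-1)^{\d\Gamma+1}$ tracking the Euler-characteristic normalization of $e$-polynomials of tori matches the exponents $(-1)^{\d\Gamma+p+1}$ and $(-1)^{\d\Gamma+1}$ appearing in \eqref{eq:7-6-5}.

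The main obstacle I anticipate is bookkeeping of the stratification contributions when $\Psi(\Gamma)$ is a vertex versus positive-dimensional, and in particular confirming that the correction term $\binom{\dd_\Gamma}{p+1}$ (and the absence of a $\varphi$-term there) is exactly what is needed to unify the two cases into a single face-sum; this requires carefully matching the "no hypersurface" contribution of the fiber-only strata against the analytic continuation of Theorem \ref{thm:2-14} to $\d\Psi(\Gamma)=0$. A secondary but real subtlety is verifying that $l_\tau$ preserves each stratum $\overline{Z^*}\cap T_\Gamma$ and acts on the fiber torus with all eigenvalues $1$ after the vertex-translation normalization, so that the eigenvalue-$1$ equivariant $e$-polynomials genuinely multiply; this uses the hypothesis $\d\Delta=n$ forcing $\tau$ to be a torsion element and $\tau^v=1$ on vertices of $\Delta^w$. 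Everything else — the additivity of $e^{p,q}(\,\cdot\,)_\alpha$ over locally closed decompositions and its multiplicativity for Zariski-locally-trivial fibrations with torus fibers — is standard Hodge-theoretic input, and the Vandermonde-type binomial identities are routine.
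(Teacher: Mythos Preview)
The paper does not prove this proposition; it is quoted from \cite[Proposition 2.12]{M-T-3}, so there is no in-paper argument to compare against. That said, your outline is the standard route (going back to \cite[Section 5]{D-K} in the non-equivariant case and presumably reproduced in \cite{M-T-3}): stratify $\overline{Z^*}$ by the torus orbits $T_\Gamma\subset X_{\Delta'}$, identify $\overline{Z^*}\cap T_\Gamma$ with $Z^*_{\Psi(\Gamma)}\times(\CC^*)^{\dd_\Gamma}$, apply Theorem \ref{thm:2-14} on each piece, and collapse the binomial part via Vandermonde. The equivariant refinement and the check that $l_\tau$ acts trivially on the fiber tori are handled exactly as you describe.

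There is one genuine slip in your bookkeeping, though it is precisely at the point you flagged as the main obstacle. When $\Psi(\Gamma)$ is a vertex of $\Delta$, the initial form $g^u$ is a nonzero monomial, so $\overline{Z^*}\cap T_\Gamma=\emptyset$ and the stratum contributes \emph{zero}, not a bare torus $(\CC^*)^{\dd_\Gamma}$. Correspondingly, the correction $-\binom{\dd_\Gamma}{p+1}$ in the first sum of \eqref{eq:7-6-5} does not arise from these vertex strata. It arises for every $\Gamma$ from the Vandermonde identity itself: convolving the binomial part of Theorem \ref{thm:2-14} against $(t-1)^{\dd_\Gamma}$ produces
\[
(-1)^{\d\Gamma+p+1}\sum_{i=0}^{p}\binom{\dd_\Gamma}{i}\binom{\d\Psi(\Gamma)}{p+1-i}
=(-1)^{\d\Gamma+p+1}\left\{\binom{\d\Gamma}{p+1}-\binom{\dd_\Gamma}{p+1}\right\},
\]
the missing term being $i=p+1$ (excluded since Theorem \ref{thm:2-14} is only valid for $p\geq 0$, and formally gives the wrong value at ``$p=-1$''). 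With the convention $l^*(0\cdot\Delta)_1=0$ one has $\varphi_{1,j}(\mathrm{pt})=\delta_{j,1}$, so the second sum in \eqref{eq:7-6-5} also vanishes automatically for vertex strata (the only surviving index would be $i=p+1$, outside the range), and both sums are then uniform over all $\Gamma\prec\Delta'$. Once you correct this accounting, your sketch goes through without further change.
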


\begin{definition}\label{dfn:2-16}
Let $\Delta$ be an $n$-dimensional integral polytope in $(\RR^n, \ZZ^n)$.
\begin{enumerate}
\item (see \cite[Section2.3]{D-K}) We say that $\Delta$ is prime if for any vertex $w$ of $\Delta$ the cone $\Con(\Delta,w)$ is generated by a basis of $\RR^n$.
\item (see \cite[Definition 2.10]{M-T-3}) We say that $\Delta$ is pseudo-prime if for any $1$-dimensional face $\gamma \prec \Delta$ the number of the $2$-dimensional faces $\gamma^{\prime} \prec \Delta$ such that $\gamma \prec \gamma^{\prime}$ is $n-1$.
\end{enumerate}
\end{definition}

By definition, prime polytopes are pseudo-prime. Moreover any face of a pseudo-prime polytope is again pseudo-prime.

For $\alpha \in \CC \setminus \{1\}$ and a face $\Gamma \prec \Delta$, set $\tl{\varphi}_{\alpha}(\Gamma)=\sum_{i=0}^{\d \Gamma} \varphi_{\alpha, i}(\Gamma)$. Then as in \cite[Section 5.5 and Theorem 5.6]{D-K} we obtain the following result. 

\begin{proposition}\label{cor:2-18} 
{\bf (\cite[Corollary 2.15]{M-T-3})} 
Assume that $\Delta=NP(g)$ is pseudo-prime. Then for any $\alpha \in \CC \setminus \{1\}$ and $r \geq 0$, we have
\begin{equation}
\sum_{p+q=r}e^{p,q}(Z^*)_{\alpha}=(-1)^{n+r} \sum_{\begin{subarray}{c} \Gamma \prec \Delta\\ \d \Gamma =r+1\end{subarray}} \left\{ \sum_{\Gamma^{\prime} \prec \Gamma} (-1)^{\d\Gamma^{\prime}}\tl{\varphi}_{\alpha}(\Gamma^{\prime})\right\}.
\end{equation}
\end{proposition}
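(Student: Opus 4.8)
The plan is to reduce this to Theorem \ref{thm:2-14} combined with Proposition \ref{prp:2-15} through a Mayer--Vietoris-type stratification of the boundary of $\Delta$, exactly in the spirit of \cite[Section 5.5]{D-K}, but keeping track of the eigenvalue $\alpha$ throughout. First I would recall that for a projective toric compactification $X_{\Delta'}$ on which the closure $\overline{Z^*}$ of $Z^*$ is smooth (which exists by pseudo-primeness together with a simplicial subdivision argument), the stratification $X_{\Delta'} = \bigsqcup_{\Gamma} T_{\Gamma}$ by torus orbits induces a stratification $\overline{Z^*} = \bigsqcup_{\Gamma \prec \Delta'} Z^*_{\Gamma}$, where $Z^*_{\Gamma} = \overline{Z^*} \cap T_{\Gamma}$ is a non-degenerate hypersurface in the torus $T_{\Gamma} \cong (\CC^*)^{\d \Gamma}$ with Newton polytope $\Gamma$ and with an induced action of $l_{\tau}$. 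Since the equivariant $e$-polynomial is additive over such stratifications (the eigenspace decomposition is compatible with the long exact sequences for mixed Hodge structures with compact support), we get $e^{p,q}(\overline{Z^*})_{\alpha} = \sum_{\Gamma \prec \Delta'} e^{p,q}(Z^*_{\Gamma})_{\alpha}$, and since $\alpha \neq 1$ the ambient torus contributes nothing by Proposition \ref{prp:2-15}.

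Next I would sum over $q$ with $p+q=r$ and apply Theorem \ref{thm:2-14} to each stratum $Z^*_{\Gamma}$: for $\alpha \neq 1$ this gives $\sum_{p+q=r} e^{p,q}(Z^*_{\Gamma})_{\alpha} = (-1)^{\d\Gamma+1} \varphi_{\alpha, \d\Gamma - p}(\Gamma)$ summed over the appropriate $p$, which after summing over $p$ yields a term proportional to $\tl{\varphi}_{\alpha}(\Gamma)$. The key point is that pseudo-primeness is precisely the hypothesis ensuring that the combinatorics of the subdivision $\Delta' \to \Delta$ does not introduce spurious contributions: each face of $\Delta$ is itself pseudo-prime, and one checks that the faces $\Gamma'$ of $\Delta'$ lying over a fixed face $\Gamma$ of $\Delta$ contribute a telescoping sum. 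Here I expect to use the Möbius-inversion/inclusion-exclusion identity over the face lattice that produces the inner alternating sum $\sum_{\Gamma' \prec \Gamma}(-1)^{\d\Gamma'}\tl{\varphi}_{\alpha}(\Gamma')$, rearranging the double sum so that the outer index runs over $(r+1)$-dimensional faces $\Gamma$ of $\Delta$ and the inner one over all subfaces $\Gamma'$.

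The main obstacle, and the step requiring the most care, is the passage from the smooth compactification $\overline{Z^*} \subset X_{\Delta'}$ back to intrinsic data of $\Delta = NP(g)$: one must verify that the contributions of the "extra" faces created by subdividing $\Delta$ into a prime (or at least smooth) polytope cancel in the sum over $q$, so that only the geometry of $\Delta$ itself survives. This is exactly where Proposition \ref{prp:7-6-5} (the majorization formula) and the pseudo-primeness of $\Delta$ enter: pseudo-primeness forces $\dd_\Gamma = \d\Gamma - \d\Psi(\Gamma)$ to behave well and makes the binomial bookkeeping in \eqref{eq:7-6-5} collapse for $\alpha \neq 1$. Once that cancellation is established, the remaining identity is a purely combinatorial rewriting, and comparing with \cite[Theorem 5.6]{D-K} (the $\alpha=1$ case of Danilov--Khovanskii) finishes the argument. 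I would note in passing that the overall sign $(-1)^{n+r}$ comes from the $(-1)^{\d\Gamma+1}$ factors together with $\d\Gamma = r+1$ for the top-dimensional faces in the sum.
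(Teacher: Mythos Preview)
The paper does not actually prove this proposition: it is quoted verbatim from \cite[Corollary 2.15]{M-T-3}, with no argument supplied here. So there is no ``paper's own proof'' to compare against beyond the implicit reference to \cite[Section 5.5 and Theorem 5.6]{D-K} and to \cite[Propositions 2.13 and 2.14]{M-T-3} mentioned in the surrounding text and in the Remark after Theorem \ref{thm:7-15}.

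Your outline is broadly in the spirit of the Danilov--Khovanskii argument that \cite{M-T-3} adapts: pass to a smooth projective toric compactification, stratify $\overline{Z^*}$ by torus orbits, use additivity of the equivariant $e$-polynomial, apply Theorem \ref{thm:2-14} on each stratum, then invoke Poincar\'e duality on $\overline{Z^*}$ and M\"obius inversion over the face poset to isolate the contribution of $Z^*$ itself. That is the correct architecture.

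Two points of caution. First, your appeal to Proposition \ref{prp:7-6-5} is slightly misplaced: as stated in this paper that proposition computes $\sum_q e^{p,q}(\overline{Z^*})_1$, i.e.\ the eigenvalue-$1$ part only. For $\alpha\neq 1$ the analogue is simpler (the binomial term in \eqref{eq:7-6-5} disappears, leaving only the second sum with $\varphi_{\alpha,\ldots}$), but you should state and use that $\alpha\neq 1$ version rather than \eqref{eq:7-6-5} itself. Second, the real work hidden in your phrase ``pseudo-primeness forces $\dd_\Gamma$ to behave well'' is the content of \cite[Propositions 2.13 and 2.14]{M-T-3}: one needs that for a pseudo-prime $\Delta$ one can arrange a majorizing simple polytope $\Delta'$ so that the extra faces of $\Delta'$ contribute only through low-codimensional data, and this is where the hypothesis on $1$-dimensional faces in Definition \ref{dfn:2-16}(ii) is actually used. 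Your sketch acknowledges this step but does not carry it out; filling it in is the substantive part of the proof.
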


The following lemma will be used later. 

\begin{lemma}\label{lem:7-6-9}
Let $\gamma$ be a $d$-dimensional prime polytope. Then for any $0 \leq p \leq d$ we have
\begin{equation}\label{eq:7-6-9}
\sum_{\Gamma \prec \gamma} (-1)^{\d \Gamma}\binom{\d \Gamma}{p}=\sum_{\Gamma \prec \gamma}(-1)^{d +\d \Gamma}\binom{\d \Gamma}{d-p}.
\end{equation}
\end{lemma}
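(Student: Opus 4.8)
The plan is to prove this as a purely combinatorial identity about the face poset of a prime polytope, entirely independently of the toric/Hodge-theoretic machinery above. The key structural input is that for a $d$-dimensional prime polytope $\gamma$, every vertex cone $\Con(\gamma,w)$ is generated by a basis of $\RR^d$ (after restricting to the affine span); this forces the face poset of $\gamma$ — at least locally near each vertex — to look like that of a simplex, and in particular implies that the face-counting generating function of $\gamma$ is constrained by a Dehn--Sommerville-type relation. Concretely, I would introduce the face polynomial $f_\gamma(t) = \sum_{\Gamma \prec \gamma} t^{\d\Gamma}$ (sum over all faces including $\gamma$ itself and, if one wishes, the empty face; I will fix the convention so that the two sides match). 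The right-hand side of \eqref{eq:7-6-9} is, up to the sign $(-1)^d$ and the substitution $p \mapsto d-p$, the same expression as the left-hand side with $\binom{\d\Gamma}{p}$ replaced by $\binom{\d\Gamma}{d-p}$; so the whole statement is equivalent to a single symmetry of the transformed polynomial obtained from $f_\gamma$.

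First I would reduce \eqref{eq:7-6-9} to a generating-function identity: multiplying both sides by $x^p$ and summing over $p$, the left side becomes $\sum_{\Gamma\prec\gamma}(-1)^{\d\Gamma}(1+x)^{\d\Gamma}$ and the right side becomes $(-1)^d\sum_{\Gamma\prec\gamma}(-1)^{\d\Gamma} x^{d}(1 + x^{-1})^{\d\Gamma}$, i.e. $(-1)^d x^d \sum_{\Gamma\prec\gamma}(-1)^{\d\Gamma}\bigl(\tfrac{x+1}{x}\bigr)^{\d\Gamma}$. Writing $F(y) := \sum_{\Gamma\prec\gamma}(-1)^{\d\Gamma} y^{\d\Gamma} = f_\gamma(-y)$, the claimed identity is exactly the functional equation
\begin{equation*}
F(x+1) = (-1)^d x^d F\!\left(\frac{x+1}{x}\right),
\end{equation*}
which, after the substitution $y = x+1$, reads $F(y) = (-1)^d (y-1)^d F\!\bigl(\tfrac{y}{y-1}\bigr)$. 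So everything comes down to showing that the face polynomial of a prime polytope satisfies this Dehn--Sommerville symmetry.

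Next I would prove that functional equation. The natural route is induction on $d$, peeling off one vertex $w$ at a time and using primeness: since $\Con(\gamma,w)$ is a simplicial cone spanned by a lattice basis, the faces of $\gamma$ containing $w$ are in inclusion-preserving bijection with the faces of a $d$-simplex, while the faces not containing $w$ form the face poset of (a subcomplex of) $\partial\gamma$. Alternatively — and this is cleaner — I would invoke the standard Dehn--Sommerville relations directly: the hypothesis that \emph{every} vertex cone is simplicial is precisely the condition that $\gamma$ is a simple polytope, and the face polynomial (equivalently the $h$-vector) of a simple $d$-polytope is well known to satisfy exactly $f_\gamma(t) = (-1)^d f_\gamma(-1-t)$ in the appropriate normalization, which unwinds to the displayed functional equation for $F$. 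I would then just need to match conventions carefully (whether the empty face and the full face $\gamma$ are included, and how the "$d$" in the binomials interacts with $\d\gamma = d$) and check the boundary cases $d=0,1$ by hand.

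The main obstacle I anticipate is bookkeeping rather than conceptual: getting the conventions for the face poset exactly right so that the elementary identity $\sum_{\text{faces }\Gamma} (-1)^{\d\Gamma}\binom{\d\Gamma}{p} = (-1)^d \sum_{\text{faces }\Gamma}(-1)^{\d\Gamma}\binom{\d\Gamma}{d-p}$ comes out with the correct sign and without an off-by-one error, and confirming that "prime" in Definition \ref{dfn:2-16}(i) is being used in precisely the sense needed (all vertex cones simplicial, i.e. $\gamma$ simple). Once the reduction to the Dehn--Sommerville functional equation for $F$ is in place, the remaining work is either a short induction using primeness to split the face poset at a vertex, or a direct citation of the simple-polytope Dehn--Sommerville relations; I would favor the self-contained induction so the lemma does not depend on external polytope theory.
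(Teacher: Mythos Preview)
Your proposal is correct and follows essentially the same route as the paper: both recognize that ``prime'' means simple and reduce the identity to the Dehn--Sommerville relations. The paper's argument is just terser --- it passes to the dual polytope $\gamma^{\vee}$ (simplicial because $\gamma$ is simple), notes $f_{\gamma^{\vee},j}=f_{\gamma,d-1-j}$, and cites Dehn--Sommerville for simplicial polytopes directly, without the intermediate generating-function reformulation you set up.
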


\begin{proof}
For a polytope $\Delta$, denote the number of the $j$-dimensional faces of $\Delta$ by $f_{\Delta,j}$ and set $f_{\Delta,-1}=1$. Let $\gamma^{\vee}$ be the dual polytope of $\gamma$. Then $\gamma^{\vee}$ is simplicial and we have $f_{\gamma^{\vee},j}=f_{\gamma, d-1-j}$ for any $0 \leq j \leq d$. Hence \eqref{eq:7-6-9} follows from the Dehn-Sommerville equations (see \cite{Sommerville} etc.) for simplicial polytopes. \qed
\end{proof}

\section{Motivic Milnor fibers}\label{sec:3}

In \cite{D-L-1} and \cite{D-L-2} Denef and Loeser introduced motivic Milnor fibers. In this section, we recall their definition and basic properties. Let $f \in \CC[x_1, x_2, \ldots, x_n]$ be a polynomial such that the hypersurface $f^{-1}(0)= \{x\in \CC^n \ |\ f(x)=0\}$ has an isolated singular point at $0\in \CC^n$. Then by a fundamental theorem of Milnor \cite{Milnor}, for the Milnor fiber $F_0$ of $f$ at $0$ we have $H^j(F_0;\CC) \simeq 0$ ($j\neq 0, \ n-1$). Denote by $\Phi_{n-1,0}\colon H^{n-1}(F_0;\CC) \simto H^{n-1}(F_0;\CC)$ the $(n-1)$-th Milnor monodromy of $f$ at $0 \in \CC^n$. Let $\pi \colon X \longrightarrow \CC^n$ be an embedded resolution of $f^{-1}(0)$ such that $\pi^{-1}(0)$ and $\pi^{-1}(f^{-1}(0))$ are normal crossing divisors in $X$. Let $D_1, D_2, \ldots, D_m$ be the irreducible components of $\pi^{-1}(0)$ and denote by $Z$ the proper transform of $f^{-1}(0)$ in $X$. For $1 \leq i \leq m$ denote by $a_i>0$ the order of the zero of $g:= f \circ \pi$ along $D_i$. For a non-empty subset $I \subset \{ 1,2, \ldots, m\}$ we set $d_I=\gcd (a_i)_{i \in I}>0$, $D_I=\bigcap_{i \in I}D_i$ and
\begin{equation}
D_I^{\circ}=D_I \setminus \left\{ \( \bigcup_{i \notin I}D_i\) \cup Z \right\} \subset X.
\end{equation}
Moreover we set
\begin{equation}
Z_I^{\circ}=\left\{ D_I \setminus \left( \bigcup_{i \notin I}D_i\right) \right\} \cap Z \subset X. 
\end{equation}
Then, as in \cite[Section 3.3]{D-L-2}, we can construct an unramified Galois covering $\tl{D_I^{\circ}} \longrightarrow D_I^{\circ}$ of $D_I^{\circ}$ as follows. First, for a point $p \in D_I^{\circ}$ we take an affine open neighborhood $W \subset X \setminus \{ ( \bigcup_{i \notin I} D_i) \cup Z \}$ of $p$ on which there exist regular functions $\xi_i$ ($i \in I$) such that $D_i \cap W=\{ \xi_i=0 \}$ for any $i \in I$. Then on $W$ we have $g= f \circ \pi =g_{1,W} (g_{2,W})^{d_I}$, where we set $g_{1,W}=g \prod_{i \in I}\xi_i^{-a_i}$ and $g_{2,W}=\prod_{i \in I} \xi_i^{\frac{a_i}{d_I}}$. Note that $g_{1,W}$ is a unit on $W$ and $g_{2,W} \colon W \longrightarrow \CC$ is a regular function. It is easy to see that $D_I^{\circ}$ is covered by such affine open subsets $W$. Then as in \cite[Section 3.3]{D-L-2} by gluing the varieties
\begin{equation}\label{eq:6-26}
\tl{D_{I,W}^{\circ}}=\{(t,z) \in \CC^* \times (D_I^{\circ} \cap W) \ |\ t^{d_I} =(g_{1,W})^{-1}(z)\}
\end{equation}
together in the following way, we obtain the variety $\tl{D_I^{\circ}}$ over $D_I^{\circ}$. If $W^{\prime}$ is another such open subset and $g=g_{1,W^{\prime}} (g_{2,W^{\prime}})^{d_I}$ is the decomposition of $g$ on it, we patch $\tl{D_{I,W}^{\circ}}$ and $\tl{D_{I,W^{\prime}}^{\circ}}$ by the morphism $(t,z) \longmapsto (g_{2,W^{\prime}}(z)( g_{2,W})^{-1}(z) \cdot t, z)$ defined over $W \cap W^{\prime}$. Now for $d \in \ZZ_{>0}$, let $\mu_d \simeq \ZZ/\ZZ d$ be the multiplicative group consisting of the $d$-roots in $\CC$. We denote by $\hat{\mu}$ the projective limit $\underset{d}{\varprojlim} \mu_d$ of the projective system $\{ \mu_i \}_{i \geq 1}$ with morphisms $\mu_{id} \longrightarrow \mu_i$ given by $t \longmapsto t^d$. Then the unramified Galois covering $\tl{D_I^{\circ}}$ of $D_I^{\circ}$ admits a natural $\mu_{d_I}$-action defined by assigning the automorphism $(t,z) \longmapsto (\zeta_{d_I} t, z)$ of $\tl{D_I^{\circ}}$ to the generator $\zeta_{d_I}:=\exp (2\pi\sqrt{-1}/d_I) \in \mu_{d_I}$. Namely the variety $\tl{D_I^{\circ}}$ is equipped with a good $\hat{\mu}$-action in the sense of Denef-Loeser \cite[Section 2.4]{D-L-2}. Note that also the variety $Z_I^{\circ}$ is equipped with the trivial good $\hat{\mu}$-action. Following the notations in \cite{D-L-2}, denote by $\M_{\CC}^{\hat{\mu}}$ the ring obtained from the Grothendieck ring $\KK_0^{\hat{\mu}}(\Var_{\CC})$ of varieties over $\CC$ with good $\hat{\mu}$-actions by inverting the Lefschetz motive $\LL\simeq \CC \in \KK_0^{\hat{\mu}}(\Var_{\CC})$. Recall that $\LL \in \KK_0^{\hat{\mu}}(\Var_{\CC})$ is endowed with the trivial action of $\hat{\mu}$.

\begin{definition}{\bf (Denef and Loeser \cite{D-L-1} 
and \cite{D-L-2})}\label{dfn:3-1} We define the motivic Milnor fiber $\SS_{f,0} \in \M_{\CC}^{\hat{\mu}}$ of $f$ at $0 \in \CC^n$ by
\begin{equation}\label{MMF}
\SS_{f,0} =\sum_{I \neq \emptyset}\left\{ (1-\LL)^{\sharp I -1} [\tl{D_I^{\circ}}] + (1-\LL)^{\sharp I} [Z_I^{\circ}]\right\} \in \M_{\CC}^{\hat{\mu}}.
\end{equation}
\end{definition}

As in \cite[Section 3.1.2 and 3.1.3]{D-L-2}, we denote by $\HSm$ the abelian category of Hodge structures with a quasi-unipotent endomorphism. Let $\KK_0(\HSm)$ be its Grothendieck ring. Then as in \cite{D-L-2}, to the cohomology groups $H^j(F_0;\CC)$ and the semisimple parts of their monodromy automorphisms, we can naturally associate an element
\begin{equation}
[H_f] \in \KK_0(\HSm). 
\end{equation}
To describe the element $[H_f]\in \KK_0(\HSm)$ in terms of $\SS_{f,0} \in \M_{\CC}^{\hat{\mu}}$, let
\begin{equation}
\chi_h \colon \M_{\CC}^{\hat{\mu}} \longrightarrow \KK_0(\HSm)
\end{equation}
be the Hodge characteristic morphism defined in \cite{D-L-2} which associates to a variety $Z$ with a good $\mu_d$-action the Hodge structure
\begin{equation}
\chi_h ([Z])=\sum_{j \in \ZZ} (-1)^j [H_c^j(Z;\QQ)] \in \KK_0(\HSm)
\end{equation}
with the actions induced by the one $z \longmapsto \exp (2\pi\sqrt{-1}/d)z$ ($z\in Z$) on $Z$. Then we have the following fundamental result. 

\begin{theorem}\label{thm:7-6}
{\bf (Denef-Loeser \cite[Theorem 4.2.1]{D-L-1})} In the Grothendieck group $\KK_0(\HSm)$, we have
\begin{equation}
[H_f]=\chi_h(\SS_{f,0}).
\end{equation}
\end{theorem}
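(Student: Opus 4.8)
The plan is to follow Denef and Loeser and deduce the identity from motivic integration on spaces of arcs, reducing everything to a single embedded resolution. First I would reintroduce the \emph{motivic zeta function} of $f$ at $0$,
\begin{equation}
Z_{f,0}(T) = \sum_{e \geq 1} \bigl[\mathcal{X}_{e}\bigr]\, \LL^{-ne}\, T^e \ \in\ \M_{\CC}^{\hat{\mu}}[[T]],
\end{equation}
where $\mathcal{X}_{e}$ is the variety of arcs $\gamma$ modulo $t^{e+1}$ with $\gamma(0)=0$ and $f(\gamma(t)) \equiv t^{e} \bmod t^{e+1}$, carrying the $\mu_e$-action induced by rescaling the parameter $t$. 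Applying the change-of-variables formula of motivic integration along $\pi \colon X \to \CC^n$ — and using that over $D_I$ one has $g = f\circ\pi = g_{1,W}(g_{2,W})^{d_I}$ with $g_{1,W}$ a unit, so that exactly the $d_I$-th root of $g_{1,W}^{-1}$ entering the construction of $\tl{D_I^{\circ}}$ is forced — yields a closed rational expression for $Z_{f,0}(T)$ as a finite $\ZZ$-linear combination, indexed by the strata $D_I^{\circ}$ ($\emptyset \neq I \subset \{1,\ldots,m\}$) and $Z_I^{\circ}$, of products $\prod_{i\in I}\tfrac{\LL^{-a_i}T^{a_i}}{1-\LL^{-a_i}T^{a_i}}$. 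Applying Denef-Loeser's limit operation $\lim_{T\to\infty}$, under which each such factor is sent to $-1$, to $-Z_{f,0}(T)$, one recovers precisely the expression \eqref{MMF} for $\SS_{f,0}$; in particular this already shows that $\SS_{f,0}$ does not depend on the choice of $\pi$.

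Next I would pass to Hodge realizations. Since $\chi_h \colon \M_{\CC}^{\hat{\mu}} \to \KK_0(\HSm)$ is a ring homomorphism, $\chi_h(Z_{f,0}(T))$ is again rational with the same denominators, so $\chi_h$ commutes with the limit operation and $\chi_h(\SS_{f,0})$ equals the limit of $\chi_h(Z_{f,0}(T))$. It thus remains to identify this limit with $[H_f]$, i.e.\ with the class of the cohomology of the topological Milnor fiber $F_0$ equipped with the semisimple part of its monodromy. For this I would invoke the comparison between the motivic and the topological nearby fiber. Concretely, the class of Steenbrink's limit mixed Hodge structure on $H^{*}(F_0)$ together with the semisimple part of the monodromy is computed from $\pi$ by a weight spectral sequence whose $E_1$-terms are governed by the cohomology of the multi-intersections $D_I$ and of the covers $\tl{D_I^{\circ}}$; one then checks term by term that this matches the A'Campo-type decomposition \eqref{MMF} of $\SS_{f,0}$ under $\chi_h$, the eigenvalue $\zeta_{d_I}^{k}$ acting on $H^{*}(\tl{D_I^{\circ}})$ corresponding to the monodromy eigenvalue on the associated graded piece. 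This gives $\chi_h(\SS_{f,0}) = [H_f]$.

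The main obstacle is precisely this last comparison: one must know that the Hodge realization of the motivic nearby fiber coincides with the genuine limit mixed Hodge structure carrying the (semisimple part of the) monodromy. The cleanest way to establish it is through the nearby cycle functor on Saito's mixed Hodge modules together with its motivic lift, or through the analytic Milnor fiber of Nicaise-Sebag; in either formulation the delicate point is the careful bookkeeping of the $\hat{\mu}$-action, so that monodromy eigenvalues are tracked correctly through the change-of-variables formula and through the gluing of the local coverings $\tl{D_{I,W}^{\circ}}$ in \eqref{eq:6-26}. Once that dictionary is in place, the theorem reduces to the explicit A'Campo/Steenbrink computation on a single embedded resolution, which is routine.
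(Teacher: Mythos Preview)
The paper does not supply its own proof of this theorem: it is stated with attribution to Denef--Loeser \cite[Theorem~4.2.1]{D-L-1} and is used as a black box in Sections~\ref{sec:3} and~\ref{sec:4}. So there is no in-paper argument to compare your proposal against.

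That said, your outline is a faithful sketch of the original Denef--Loeser strategy. The sequence --- define the equivariant motivic zeta function $Z_{f,0}(T)$, evaluate it on an embedded resolution via the change-of-variables formula for motivic integration, apply the formal limit $T\to\infty$ to recover $\SS_{f,0}$, and then push through $\chi_h$ --- is exactly how \cite{D-L-1} proceeds. You are also right that the substantive point is the final comparison: identifying $\chi_h$ of the motivic nearby fiber with Steenbrink's limit mixed Hodge structure on $H^{*}(F_0)$ together with the semisimple monodromy. In \cite{D-L-1} this is carried out by matching, stratum by stratum on the resolution, the Hodge realization of the pieces $[\tl{D_I^{\circ}}]$ and $[Z_I^{\circ}]$ with the terms of the weight spectral sequence computing the limit MHS; your description of this step is accurate, including the caveat that the $\hat{\mu}$-bookkeeping is where the care is needed.

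One small point: your phrase ``$\chi_h$ commutes with the limit operation'' deserves more justification than you give it. The limit $\lim_{T\to\infty}$ is defined only on the subring of $\M_{\CC}^{\hat{\mu}}[[T]]$ generated by the rational functions $\frac{\LL^{-a}T^{a}}{1-\LL^{-a}T^{a}}$, and one must check that $\chi_h$ sends this subring into the analogous subring of $\KK_0(\HSm)[[T]]$ and intertwines the two limit maps. This is straightforward once stated, but it is not automatic from $\chi_h$ being a ring homomorphism alone.
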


For $[H_f] \in \KK_0(\HSm)$ also the following result due to Steenbrink \cite{Steenbrink} and Saito \cite{Saito-1}, \cite{Saito-2} is fundamental.

\begin{theorem}[Steenbrink \cite{Steenbrink} and Saito \cite{Saito-1}, \cite{Saito-2}]\label{S-S}
In the situation as above, we have
\begin{enumerate}
\item Let $\lambda \in \CC^* \setminus \{1\}$. Then we have $e^{p,q}( [H_f])_{\lambda}=0$ for $(p,q) \notin [0,n-1] \times [0,n-1]$. Moreover for $(p,q) \in [0,n-1] \times [0,n-1]$ we have
\begin{equation}
e^{p,q}( [H_f])_{\lambda}=e^{n-1-q,n-1-p}( [H_f])_{\lambda}.
\end{equation}
\item We have $e^{p,q}( [H_f])_{1}=0$ for $(p,q) \notin \{(0, 0)\} \sqcup ([1,n-1] \times [1,n-1])$ and $e^{0,0}( [H_f])_{1}=1$. Moreover for $(p,q) \in [1,n-1] \times [1,n-1]$ we have
\begin{equation}
e^{p,q}( [H_f])_{1}=e^{n-q,n-p}( [H_f])_{1}.
\end{equation}
\end{enumerate}
\end{theorem}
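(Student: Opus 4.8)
This is a classical result, and we only indicate the strategy we would follow; the plan is to deduce everything from M.~Saito's theory of nearby-cycle mixed Hodge modules, which reproves and sharpens Steenbrink's original construction by semistable reduction. First we would put the canonical mixed Hodge structure on the vanishing cohomology. Let $\QQ^H_{\CC^n}[n]$ be the pure Hodge module of weight $n$ underlying the constant sheaf on $\CC^n$ and apply Saito's nearby-cycle functor $\psi_f$, obtaining a mixed Hodge module $\psi_f\QQ^H_{\CC^n}[n]$ on $f^{-1}(0)$, which splits as $\psi_{f,1}\oplus\psi_{f,\neq1}$ according to the generalized monodromy eigenvalue ($1$ or $\neq1$). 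Since $f$ has an isolated singularity at $0$, the stalk cohomology of this complex at $0$ in the appropriate degree computes $\tl{H}^{n-1}(F_0;\QQ)$ together with the semisimple part of $\Phi_{n-1,0}$, which thereby becomes an object of $\HSm$; adjoining the trivial structure $\QQ(0)$ of $H^0(F_0;\QQ)$ gives $[H_f]=[\QQ(0)]+(-1)^{n-1}[\tl{H}^{n-1}(F_0;\QQ)]$ in $\KK_0(\HSm)$, so that the $H^0$-summand already produces the equality $e^{0,0}([H_f])_1=1$ and it remains to control the equivariant Hodge numbers of $\tl{H}^{n-1}(F_0)$. (Alternatively one could run the Steenbrink spectral sequence of an embedded resolution $\pi\colon X\to\CC^n$ of $f^{-1}(0)$ as in Section~\ref{sec:3}, whose $E_1$-terms are cohomologies of the strata of the exceptional and proper-transform divisors.)

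Next we would determine the ranges of the weight and Hodge filtrations. As $\QQ^H_{\CC^n}[n]$ is pure of weight $n$, Saito's theory identifies the weight filtration on $\psi_f\QQ^H_{\CC^n}[n]$ with the relative monodromy weight filtration of $N=\log T_u$, centered at $n-1$ on the $\lambda\neq1$ part and at $n$ on the $\lambda=1$ part. Hence on $\tl{H}^{n-1}(F_0)_{\lambda\neq1}$ the weight filtration is $W(N)$ centered at $n-1$, so $\Gr^W_k\neq0$ only for $0\le k\le 2n-2$, while on $\tl{H}^{n-1}(F_0)_1$ it is $W(N)$ centered at $n$, where the monodromy theorem ($N^{n-1}=0$ on the $\lambda=1$ part) forces $\Gr^W_k\neq0$ only for $2\le k\le 2n-2$. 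Combining this with Steenbrink's bounds on the spectrum --- all spectral numbers of an isolated hypersurface singularity lie in the open interval $(0,n)$, and those with eigenvalue $1$ are integers, equivalently $F^n=0$ and $F^0$ is the whole space in general, while on the $\lambda=1$ part moreover $\Gr^0_F=0$ --- we obtain $h^{p,q}(\tl{H}^{n-1}(F_0)_\lambda)=0$ unless $(p,q)\in[0,n-1]\times[0,n-1]$ when $\lambda\neq1$, and unless $(p,q)\in[1,n-1]\times[1,n-1]$ when $\lambda=1$. This gives the vanishing assertions of (i) and (ii).

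It then remains to derive the two symmetries from duality. The constant Hodge module on the smooth $n$-fold $\CC^n$ is self-dual up to a Tate twist, $\mathbb{D}\,\QQ^H_{\CC^n}[n]\simeq\QQ^H_{\CC^n}[n](n)$, and Saito's nearby-cycle functor intertwines Verdier duality with the inversion $\lambda\mapsto\lambda^{-1}$; using also that $\psi_f$ of a polarizable pure Hodge module is polarizable, we get on vanishing cohomology nondegenerate pairings of mixed Hodge structures $\tl{H}^{n-1}(F_0)_\lambda\times\tl{H}^{n-1}(F_0)_{\lambda^{-1}}\to\QQ(-(n-1))$ for $\lambda\neq1$ and $\tl{H}^{n-1}(F_0)_1\times\tl{H}^{n-1}(F_0)_1\to\QQ(-n)$ for $\lambda=1$, the two twists reflecting the two centerings above. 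Hence $h^{p,q}(\tl{H}^{n-1}(F_0)_\lambda)=h^{n-1-p,n-1-q}(\tl{H}^{n-1}(F_0)_{\lambda^{-1}})$, respectively $h^{p,q}(\tl{H}^{n-1}(F_0)_1)=h^{n-p,n-q}(\tl{H}^{n-1}(F_0)_1)$. Since the monodromy eigenvalues are roots of unity we have $\lambda^{-1}=\overline\lambda$, and combining with the Hodge symmetry $h^{p,q}(\cdot)_\lambda=h^{q,p}(\cdot)_{\overline\lambda}$ (the underlying space being a $\QQ$-vector space with a quasi-unipotent automorphism) we conclude $e^{p,q}([H_f])_\lambda=e^{n-1-q,n-1-p}([H_f])_\lambda$ for $\lambda\neq1$ and $e^{p,q}([H_f])_1=e^{n-q,n-p}([H_f])_1$ for $(p,q)\in[1,n-1]\times[1,n-1]$.

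I expect the main obstacle to be the second step: identifying the weight filtration on vanishing cohomology with the correctly centered relative monodromy filtration of $N$, and pinning down the sharp range of the Hodge filtration (in particular $\Gr^0_F=0$ on the $\lambda=1$ part), is exactly the substance of Steenbrink's theorem and is what makes the statement nontrivial; once Saito's machinery is available the duality of the third step is essentially formal. If one prefers to avoid mixed Hodge modules, both the weight statement and the symmetries can instead be read off from the built-in symmetry of the Steenbrink spectral sequence of an embedded resolution, at the price of a longer and more computational argument. \qed
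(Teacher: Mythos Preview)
The paper does not prove this theorem at all: it is stated as a result ``due to Steenbrink \cite{Steenbrink} and Saito \cite{Saito-1}, \cite{Saito-2}'' and simply invoked, with the remark afterwards that the symmetries can be \emph{checked} in many cases by the explicit computations of Section~\ref{sec:4}. So there is no in-paper argument to compare against.

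Your outline is a faithful sketch of the cited Steenbrink--Saito approach: put the mixed Hodge structure on the vanishing cohomology via nearby/vanishing cycles of the constant Hodge module, identify the weight filtration with the monodromy weight filtration centered at $n-1$ (for $\lambda\neq 1$) respectively $n$ (for $\lambda=1$, on the reduced part), read off the Hodge-filtration range from the spectrum bounds, and then obtain the two symmetries from self-duality of $\QQ^H_{\CC^n}[n]$ combined with Hodge conjugation. The one place to be careful with conventions is the centering shift between $\psi_{f,1}$ and $\phi_{f,1}$: the $\lambda=1$ part of the \emph{reduced} Milnor cohomology is governed by the vanishing-cycle functor $\phi_{f,1}$, not $\psi_{f,1}$, and it is this that accounts for the center $n$ rather than $n-1$; you have the right numbers but attribute both to $\psi_f$. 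Apart from that normalization issue, the strategy is correct and is exactly what the cited references do; you are right that the substantive content sits in the second step, with the duality step being formal once Saito's machinery is in hand.
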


We can check these symmetries of $e^{p,q}([H_f])_{\lambda}$ by calculating $\chi_h( \SS_{f,0}) \in \KK_0(\HSm)$ explicitly by our methods (see Section \ref{sec:4}) in many cases. Since the weights of $[H_f] \in \KK_0(\HSm)$ are defined by the monodromy filtration, we have the following result.

\begin{theorem}\label{MF}
In the situation as above, we have
\begin{enumerate}
\item Let $\lambda \in \CC^* \setminus \{1\}$ and $k \geq 1$. Then the number of the Jordan blocks for the eigenvalue $\lambda$ with sizes $\geq k$ in $\Phi_{n-1,0}\colon H^{n-1}(F_0;\CC) \simto H^{n-1}
(F_0;\CC)$ is equal to
\begin{equation}
(-1)^{n-1} \sum_{p+q=n-2+k, n-1+k} e^{p,q}( \chi_h(\SS_{f,0} ))_{\lambda}.
\end{equation}
\item For $k \geq 1$, the number of the Jordan blocks for the eigenvalue $1$ with sizes $\geq k$ in $\Phi_{n-1, 0}$ is equal to
\begin{equation}
(-1)^{n-1} \sum_{p+q=n-1+k, n+k} e^{p,q}( \chi_h(\SS_{f,0} ))_{1}.
\end{equation}
\end{enumerate}
\end{theorem}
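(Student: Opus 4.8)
The plan is to deduce Theorem \ref{MF} from the identity $[H_f]=\chi_h(\SS_{f,0})$ of Theorem \ref{thm:7-6} together with the fact, recalled just above the statement, that the weight filtration $W$ on the vanishing cohomology $H^{n-1}(F_0;\CC)$ is the monodromy weight filtration of the logarithm $N$ of the unipotent part of $\Phi_{n-1,0}$. First I would reduce every equivariant mixed Hodge number appearing in the statement to the single group $H^{n-1}(F_0;\CC)$. Since $H^j(F_0;\CC)=0$ for $j\neq 0,n-1$ and $H^0(F_0;\CC)\simeq\CC$ carries the trivial monodromy, concentrated in bidegree $(0,0)$ (hence in weight $0$), we have $e^{p,q}([H_f])_{\lambda}=(-1)^{n-1}h^{p,q}(H^{n-1}(F_0;\CC))_{\lambda}$ for every $\lambda\in\CC^*$ and every $(p,q)$ with $p+q\geq 1$. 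Consequently, for each integer $r\geq 1$,
\[
(-1)^{n-1}\sum_{p+q=r}e^{p,q}(\chi_h(\SS_{f,0}))_{\lambda}=\dim \Gr^W_r\bigl(H^{n-1}(F_0;\CC)\bigr)_{\lambda}.
\]

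Next I would recall, from the theory underlying Theorem \ref{S-S}, that $W$ restricted to the $\lambda$-part of $H^{n-1}(F_0;\CC)$ is the monodromy weight filtration of $N$ centered at $w=n-1$ when $\lambda\neq 1$ and at $w=n$ when $\lambda=1$; these two centerings are precisely what the symmetries $e^{p,q}([H_f])_{\lambda}=e^{n-1-q,n-1-p}([H_f])_{\lambda}$ and $e^{p,q}([H_f])_1=e^{n-q,n-p}([H_f])_1$ of Theorem \ref{S-S} encode. I would then use the elementary linear-algebra fact that a single Jordan block of $N$ of size $s$ and eigenvalue $\lambda$ contributes $1$ to $\dim\Gr^W_j$ precisely for the $s$ values $j=w-s+1,\ w-s+3,\ \dots,\ w+s-1$ (an arithmetic progression of step $2$ symmetric about $w$) and $0$ otherwise.

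The crux is then a short parity count. Fix $k\geq 1$ and consider the two consecutive weights $j=w+k-1$ and $j=w+k$. For a Jordan block of size $s$ the integers $(w+k-1)-w=k-1$ and $(w+k)-w=k$ have opposite parities, and the one of the two that is $\equiv s-1\pmod 2$ lies in the progression $\{w-s+1,\dots,w+s-1\}$ if and only if $s\geq k$. Hence each Jordan block of size $s$ contributes a total of exactly $1$ to $\dim\Gr^W_{w+k-1}+\dim\Gr^W_{w+k}$ when $s\geq k$ and $0$ when $s<k$, so the sum $\dim \Gr^W_{w+k-1}(H^{n-1}(F_0;\CC))_{\lambda}+\dim \Gr^W_{w+k}(H^{n-1}(F_0;\CC))_{\lambda}$ equals the number of Jordan blocks of $\Phi_{n-1,0}$ for the eigenvalue $\lambda$ with size $\geq k$. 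Combining this with the displayed identity of the first step, noting that for $k\geq 1$ the weights occurring are $\geq 1$, and substituting $w=n-1$ (so the two weights are $n-2+k$ and $n-1+k$) when $\lambda\neq 1$ and $w=n$ (so the two weights are $n-1+k$ and $n+k$) when $\lambda=1$, we obtain assertions (i) and (ii). The only genuinely delicate ingredient is the identification of $W$ with the monodromy filtration together with its correct centering in the two cases; once that is granted — it is exactly the input summarized above Theorem \ref{MF} — everything else is the bookkeeping just described.
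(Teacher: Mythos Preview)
Your argument is correct and is precisely the approach the paper has in mind: the paper does not give a proof of Theorem~\ref{MF} beyond the one-line remark that ``the weights of $[H_f]\in\KK_0(\HSm)$ are defined by the monodromy filtration,'' and your proposal is a careful unpacking of exactly that sentence---reducing via $[H_f]=\chi_h(\SS_{f,0})$ and the vanishing of $H^j(F_0)$ for $j\neq 0,n-1$ to $\dim\Gr^W_r H^{n-1}(F_0;\CC)_{\lambda}$, and then doing the standard parity count for the monodromy weight filtration centered at $n-1$ (for $\lambda\neq 1$) or $n$ (for $\lambda=1$). There is nothing to add.
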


\section{Jordan normal forms of Milnor monodromies}\label{sec:4}

Our methods in \cite{M-T-3} can be applied also to the Jordan normal forms of local Milnor monodromies. Let $f\in \CC[x_1,\ldots,x_n]$ be a polynomial such that the hypersurface $\{x\in \CC^n \ |\ f(x)=0\}$ has an isolated singular point at $0\in \CC^n$. %Then we can rewrite Theorem \ref{MF} more explicitly as follows. 

\begin{definition}\label{CVN} 
Let $f(x)= \sum_{v \in \ZZ_+^n} a_v x^v \in \CC[x_1,\ldots,x_n]$ be a polynomial on $\CC^n$.
\begin{enumerate}
\item We call the convex hull of $\bigcup_{v \in \supp (f)} \{v + \RR_+^n\}$ in $\RR_+^n$ the Newton polyhedron of $f$ and denote it by $\Gamma_+(f)$.
\item The union of the compact faces of $\Gamma_+(f)$ is called the Newton boundary of $f$ and denoted by $\Gamma_f$.
\item
We say that $f$ is convenient if $\Gamma_+(f)$ intersects the positive part of any coordinate axis in $\RR^n$. 
\end{enumerate}
\end{definition}

\begin{definition}[\cite{Kushnirenko}]\label{NDC} 
We say that a polynomial $f(x)=\sum_{v \in \ZZ_+^n}a_vx^v$ ($a_v\in \CC$) is non-degenerate at $0\in \CC^n$ if for any face $\gamma \prec \Gamma_+(f)$ such that $\gamma \subset \Gamma_f$ the complex hypersurface $\{x \in (\CC^*)^n\ |\ f_{\gamma}(x)=0\}$ in $(\CC^*)^n$ is smooth and reduced, where we set $f_{\gamma}(x)=\sum_{v \in \gamma \cap \ZZ_+^n} a_vx^v$.
\end{definition}

Recall that generic polynomials having a fixed Newton polyhedron are non-degenerate at $0\in \CC^n$. From now on, we always assume also that $f=\sum_{v \in \ZZ^n_+} a_v x^v\in \CC[x_1,\ldots,x_n]$ is convenient and non-degenerate at $0\in \CC^n$. For each face $\gamma \prec \Gamma_+(f)$ such that $\gamma \subset \Gamma_f$, let $d_{\gamma}>0$ be the lattice distance of $\gamma$ from the origin $0 \in \RR^n$ and $\Delta_{\gamma}$ the convex hull of $\{0\} \sqcup \gamma$ in $\RR^n$. Let $\LL(\Delta_{\gamma})$ be the $(\dim \gamma +1)$-dimensional linear subspace of $\RR^n$ spanned by $\Delta_{\gamma}$ and consider the lattice $M_{\gamma}=\ZZ^n \cap \LL(\Delta_{\gamma}) \simeq \ZZ^{\dim \gamma+1}$ in it. Then we set $T_{\Delta_{\gamma}}:=\Spec (\CC[M_{\gamma}]) \simeq (\CC^*)^{\dim \gamma +1}$. Moreover let $\LL(\gamma)$ be the smallest affine linear subspace of $\RR^n$ containing $\gamma$ and for $v \in M_{\gamma}$ define their lattice heights $\height (v, \gamma) \in \ZZ$ from $\LL(\gamma)$ in $\LL(\Delta_{\gamma})$ so that we have $\height (0, \gamma)=d_{\gamma}>0$. Then to the group homomorphism $M_{\gamma} \longrightarrow \CC^*$ defined by $v \longmapsto \zeta_{d_{\gamma}}^{-\height (v, \gamma)}$ we can naturally associate an element $\tau_{\gamma} \in T_{\Delta_{\gamma}}$. We define a Laurent polynomial $g_{\gamma}=\sum_{v \in M_{\gamma}}b_v x^v$ on $T_{\Delta_{\gamma}}$ by
\begin{equation}
b_v=\begin{cases}
a_v & (v \in \gamma),\\
-1 & (v=0),\\
\ 0 & (\text{otherwise}).
\end{cases}
\end{equation}
Then we have $NP(g_{\gamma}) =\Delta_{\gamma}$, $\supp (g_{\gamma}) \subset \{ 0\} \sqcup \gamma$ and the hypersurface $Z_{\Delta_{\gamma}}^*=\{ x \in T_{\Delta_{\gamma}}\ |\ g_{\gamma}(x)=0\}$ is non-degenerate by \cite[Proposition 5.3]{M-T-3}. Moreover $Z_{\Delta_{\gamma}}^* \subset T_{\Delta_{\gamma}}$ is invariant by the multiplication $l_{\tau_{\gamma}} \colon T_{\Delta_{\gamma}} \simto T_{\Delta_{\gamma}}$ by $\tau_{\gamma}$, and hence we obtain an element $[Z_{\Delta_{\gamma}}^*]$ of $\M_{\CC}^{\hat{\mu}}$. Let $\LL(\gamma)^{\prime} \simeq \RR^{\d \gamma}$ be a linear subspace of $\RR^n$ such that $\LL(\gamma)=\LL(\gamma)^{\prime}+w $ for some $w \in \ZZ^n$ and set $\gamma^{\prime}=\gamma -w \subset \LL(\gamma)^{\prime}$. We define a Laurent polynomial $g_{\gamma}^{\prime} =\sum_{v \in \LL(\gamma)^{\prime} \cap \ZZ^n}b_v^{\prime} x^v$ on $T(\gamma ):= \Spec (\CC[\LL(\gamma)^{\prime} \cap \ZZ^n])\simeq (\CC^*)^{\d \gamma}$ by
\begin{equation}
b_v^{\prime}=\begin{cases}
a_{v+w} & (v \in \gamma^{\prime} ),\\
\ 0 & (\text{otherwise}).
\end{cases}
\end{equation}
Then we have $NP(g_{\gamma}^{\prime}) =\gamma^{\prime}$ and the hypersurface $Z_{\gamma}^*=\{ x \in T(\gamma ) \ |\ g_{\gamma}^{\prime}(x)=0\}$ is non-degenerate. We define $[Z_{\gamma}^*] \in \M_{\CC}^{\hat{\mu}}$ to be the class of the variety $Z_{\gamma}^*$ with the trivial action of $\hat{\mu}$. Finally let $S_{\gamma} \subset \{1,2,\ldots, n\}$ be the minimal subset $S$ of $\{1,2,\ldots,n\}$ such that $\gamma \subset \{ (y_1, y_2, \ldots, y_n) \in \RR^n \ | \ y_i=0 \quad \text{for any} \ i \notin S \} \simeq \RR^{\sharp S}$ and set $m_{\gamma}:=\sharp S_{\gamma}-\dim \gamma -1\geq 0$. Then as in the same way as \cite[Theorem 5.7]{M-T-3} we obtain the following theorem. 

\begin{theorem}\label{thm:8-3}
In the situation as above, we have
\begin{enumerate}
\item In the Grothendieck group $\KK_0(\HSm)$, we have
\begin{equation}\label{twot}
\chi_h(\SS_{f,0})= \sum_{\gamma \subset \Gamma_f} \chi_h\big((1-\LL)^{m_{\gamma}}\cdot[Z_{\Delta_{\gamma}}^*]\big)+\sum_{\begin{subarray}{c}\gamma \subset \Gamma_f\\ \d \gamma \geq 1\end{subarray}} \chi_h\big((1-\LL)^{m_{\gamma}+1}\cdot[Z_{\gamma}^*]\big).
\end{equation}
\item Let $\lambda \in \CC^*\setminus\{1\}$ and $k\geq 1$. Then the number of the Jordan blocks for the eigenvalue $\lambda$ with sizes $\geq k$ in $\Phi_{n-1,0}\colon H^{n-1}(F_0;\CC) \simto H^{n-1}(F_0;\CC)$ is equal to
\begin{equation}
(-1)^{n-1}\sum_{p+q=n-2+k, n-1+k}\left\{ \sum_{\gamma \subset \Gamma_f} e^{p,q}\( \chi_h\((1-\LL)^{m_{\gamma}} \cdot [Z_{\Delta_{\gamma}}^*]\)\)_{\lambda} \right\}.
\end{equation}
\item For $k\geq 1$, the number of the Jordan blocks for the eigenvalue $1$ with sizes $\geq k$ in $\Phi_{n-1,0}$ is equal to
\begin{eqnarray}
(-1)^{n-1}\sum_{p+q=n-1+k, n+k}\lefteqn{\Bigg\{ \sum_{\gamma \subset \Gamma_f} e^{p,q}\big( \chi_h\big((1-\LL)^{m_{\gamma}} \cdot [Z_{\Delta_{\gamma}}^*]\big)\big)_{1}} \nonumber \\
& & +\sum_{\begin{subarray}{c}\gamma \subset \Gamma_f\\ \d \gamma \geq 1\end{subarray}} e^{p,q}\big( \chi_h\big((1-\LL)^{m_{\gamma}+1} \cdot [Z_{\gamma}^*]\big)\big)_{1} \Bigg\}.
\end{eqnarray}
\end{enumerate}
\end{theorem}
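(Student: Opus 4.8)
Statements (ii) and (iii) follow formally from (i) and Theorem \ref{MF}: one substitutes the expression \eqref{twot} for $\chi_h(\SS_{f,0})$ into the two formulas of Theorem \ref{MF} and uses the additivity of $e^{p,q}(\,\cdot\,)_{\lambda}$ on $\KK_0(\HSm)$. So the real content is part (i), and the plan is to prove it in complete parallel with \cite[Theorem 5.7]{M-T-3} (the analogous computation for the monodromy at infinity), the only difference being that one now resolves the singularity at $0$ rather than compactifies.

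\emph{Toric resolution and stratification.} Choose a smooth subdivision $\Sigma$ of the normal fan of $\Gamma_+(f)$ (which is supported on $\RR^n_+$), and let $\pi\colon X_{\Sigma}\to\CC^n$ be the associated toric modification. Non-degeneracy of $f$ forces the proper transform $Z$ of $f^{-1}(0)$ to be smooth and $\pi$ to be an embedded resolution of $f^{-1}(0)$ near $0$; convenience forces $\pi^{-1}(0)$ to be exactly the union of the exceptional divisors $D_i$ indexed by the rays $\RR_+u_i$ of $\Sigma$ with $u_i\in\Int(\RR^n_+)\cap\ZZ^n$ primitive, with $a_i=\min_{v\in\Gamma_+(f)}\langle u_i,v\rangle$. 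For nonempty $I$ with $D_I\neq\emptyset$ the rays $(u_i)_{i\in I}$ generate a cone $\sigma_I\in\Sigma$ lying in $\Int(\RR^n_+)$, whose relative interior lies in the relative interior of the normal cone of a unique compact face $\gamma=\gamma(I)\prec\Gamma_+(f)$. Using the orbit decomposition of $X_{\Sigma}$ one identifies $D_I^\circ$ with the torus $O_{\sigma_I}\simeq(\CC^*)^{n-\sharp I}$ minus the non-degenerate hypersurface $Z\cap O_{\sigma_I}$, and, after the monomial change of coordinates carrying the exponents of $f_{\gamma}$ into a $\d \gamma$-dimensional coordinate subspace, identifies $Z_I^\circ$ (respectively the Galois cover $\tl{D_I^\circ}$ with its $\hat{\mu}$-action) with the product of a torus $(\CC^*)^{\ast}$ and $Z_{\gamma}^*$ (respectively $Z_{\Delta_{\gamma}}^*$ with the action $l_{\tau_{\gamma}}$). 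Here the extra vertex $0$ of $\Delta_{\gamma}$ and the coefficient $-1$ in $g_{\gamma}$ come precisely from unwinding the defining equation $t^{d_I}=g_{1,W}^{-1}$ of $\tl{D_I^\circ}$, and the rule $v\mapsto\zeta_{d_{\gamma}}^{-\height(v,\gamma)}$ defining $\tau_{\gamma}$ is forced by the generating automorphism $(t,z)\mapsto(\zeta_{d_I}t,z)$. (Non-degeneracy of $Z_{\Delta_{\gamma}}^*$ and $Z_{\gamma}^*$ is \cite[Proposition 5.3]{M-T-3}.)

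\emph{Collapsing the Denef--Loeser sum.} Substitute these identifications into \eqref{MMF}, use $[(\CC^*)^a]=(\LL-1)^a$ in $\M_{\CC}^{\hat{\mu}}$ for all the torus factors (which carry the trivial $\hat{\mu}$-action), and regroup the sum over $I$ by the face $\gamma=\gamma(I)$. For a fixed compact face $\gamma$ of dimension $d$, the cones $\sigma_I$ with $\gamma(I)=\gamma$ are exactly the cones of $\Sigma$ whose relative interiors lie in the relative interior of the $(n-d)$-dimensional normal cone of $\gamma$, and they form a smooth subdivision of that cone; the resulting alternating sums of classes then collapse, by an Euler-characteristic / Dehn-Sommerville type identity for subdivisions of a cone (cf. Lemma \ref{lem:7-6-9} and \cite[Section 5]{M-T-3}), to single terms independent of $\Sigma$ --- namely $(1-\LL)^{m_{\gamma}}[Z_{\Delta_{\gamma}}^*]$ from the $\tl{D_I^\circ}$-terms and, when $d\geq1$, $(1-\LL)^{m_{\gamma}+1}[Z_{\gamma}^*]$ from the $Z_I^\circ$-terms, the exponent $m_{\gamma}=\sharp S_{\gamma}-d-1$ recording the coordinate subspace $\RR^{S_{\gamma}}$ in which $\gamma$ lies. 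Summing over $\gamma\subset\Gamma_f$ gives \eqref{twot} already in $\M_{\CC}^{\hat{\mu}}$, and applying the ring homomorphism $\chi_h$ yields the stated identity in $\KK_0(\HSm)$.

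\emph{Main obstacle.} The delicate part is the combinatorial bookkeeping in the two middle steps: tracking simultaneously the three torus ranks $\sharp I$, $n-\sharp I$ and the extra $m_{\gamma}$; checking that the local $\mu_{d_I}$-covers $\tl{D^\circ_{I,W}}$ glue into a single copy of $Z_{\Delta_{\gamma}}^*$ carrying exactly $l_{\tau_{\gamma}}$, which uses the precise definition of $\tau_{\gamma}$ together with the relations in $\M_{\CC}^{\hat{\mu}}$ among covers of different orders; and verifying that the alternating sums over the subdivided normal cones telescope to an expression independent of the auxiliary smooth subdivision $\Sigma$. Convenience enters precisely to guarantee that $\pi^{-1}(0)$ is cut out by the compact faces of $\Gamma_+(f)$ and that nothing coming from the noncompact faces contributes. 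Since each of these points is the literal local mirror of the computation in \cite[Theorem 5.7]{M-T-3}, the argument transfers with only changes of notation.
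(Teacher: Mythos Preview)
Your proposal is correct and follows the same route as the paper's proof: build the embedded resolution as a smooth toric modification over the dual fan of $\Gamma_+(f)$, identify the pieces $\tl{D_I^\circ}$ and $Z_I^\circ$ with torus-products of $Z_{\Delta_{\gamma}}^*$ and $Z_{\gamma}^*$, and then regroup the Denef--Loeser sum by the compact face $\gamma=\gamma(I)$, exactly as in \cite[Theorem 5.7]{M-T-3}. One small caveat: you assert the identity already in $\M_{\CC}^{\hat{\mu}}$ before applying $\chi_h$, whereas the paper (following \cite{M-T-3}) more conservatively carries out the final collapsing only in $\KK_0(\HSm)$; this makes no difference for the theorem as stated, but the motivic-level claim would require a little extra care with the compatibility of the $\mu_{d_I}$- versus $\mu_{d_{\gamma}}$-actions. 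Also, the telescoping over the subdivided normal cone is a fan/Euler-characteristic identity rather than Lemma \ref{lem:7-6-9} proper, but your ``cf.'' is fair.
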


\begin{proof}
Since (ii) and (iii) follow from (i) and Theorem \ref{MF}, it suffices to prove (i). The proof is very similar to the one in Varchenko \cite{Varchenko}. Let $\Sigma_1$ be the dual fan of $\Gamma_+(f)$ in $\RR_+^n$ and $\Sigma$ its smooth subdivision. Denote by $X_{\Sigma}$ the smooth toric variety associated to $\Sigma$ (see Fulton \cite{Fulton} and Oda \cite{Oda} etc.). Since the union of the cones in $\Sigma$ is $\RR_+^n$, there exists a proper morphism $\pi \colon X_{\Sigma} \longrightarrow \CC^n$. By the convenience of $f$, we can construct the smooth fan $\Sigma$ without subdividing the cones contained in $\partial \RR_+^n$ (see \cite[Lemma (2.6), Chapter II]{Oka}). Then $\pi$ induces an isomorphism $X_{\Sigma} \setminus \pi^{-1}(0) \simeq \CC^n \setminus \{ 0\}$. Moreover by the non-degeneracy at $0 \in \CC^n$ of $f$, the proper transform $Z$ of the hypersurface $\{x\in \CC^n \ |\ f(x)=0\}$ in $X_{\Sigma}$ is smooth and intersects $T$-orbits in $\pi^{-1}(0)$ transversally. Let $D_1, \ldots, D_m$ be the toric divisors in $\pi^{-1}(0) \subset X_{\Sigma}$. For a non-empty subset $I \subset \{ 1,2, \ldots, m\}$ we set $D_I=\bigcap_{i \in I}D_i$ and
\begin{equation}
D_I^{\circ}=D_I \setminus \left\{ \( \bigcup_{i \notin I}D_i\) \cup Z \right\} \subset X_{\Sigma}
\end{equation}
and define its unramified Galois covering $\tl{D_I^{\circ}}$ as in Section \ref{sec:3}. Moreover we set
\begin{equation}
Z_I^{\circ}=\left\{ D_I \setminus \left( \bigcup_{i \notin I}D_i\right) \right\} \cap Z \subset X_{\Sigma}
\end{equation}
and denote by $[Z_I^{\circ}] \in \M_{\CC}^{\hat{\mu}}$ the class of the variety $Z_I^{\circ}$ with the trivial action. Then, unlike the global object $\SS_f^{\infty}$ in \cite{M-T-3}, Denef-Loeser's ``local" motivic Milnor fiber $\SS_{f,0}$ contains not only $(1-\LL)^{\sharp I -1} [\tl{D_I^{\circ}}]$ but also $(1-\LL)^{\sharp I} [Z_I^{\circ}]$ (see Definition \ref{dfn:3-1}). These new elements yield the second term in the right hand side of \eqref{twot}. Finally, in the Grothendieck group $\KK_0(\HSm)$ we can rewrite $\chi_h(\SS_{f,0})$ in terms of the dual fan $\Sigma_1$ (i.e. in terms of $\Gamma_+(f)$) as in the same way as the proof of \cite[Theorem 5.7 (i)]{M-T-3}. This completes the proof. \qed
\end{proof}

Let $q_1,\ldots, q_l$ (resp. $\gamma_1, \ldots, \gamma_{l^{\prime}}$) be the $0$-dimensional (resp. $1$-dimensional) faces of $\Gamma_+(f)$ such that $q_i \in \Int (\RR_+^n)$ (resp. $\relint(\gamma_i) \subset \Int(\RR_+^n)$). Here $\relint (\cdot )$ stands for the relative interior. For each $q_i$ (resp. $\gamma_i$), denote by $d_i >0$ (resp. $e_i>0$) the lattice distance $\dist(q_i, 0)$ (resp. $\dist(\gamma_i,0)$) of it from the origin $0\in \RR^n$. For $1\leq i \leq l^{\prime}$, let $\Delta_i$ be the convex hull of $\{0\}\sqcup \gamma_i$ in $\RR^n$. Then for $\lambda \in \CC \setminus \{1\}$ and $1 \leq i \leq l^{\prime}$ such that $\lambda^{e_i}=1$ we set
\begin{equation}
n(\lambda)_i
= \sharp\{ v\in \ZZ^n \cap \relint(\Delta_i) \ |\ \height (v, \gamma_i)=k\} +\sharp \{ v\in \ZZ^n \cap \relint(\Delta_i) \ |\ \height (v, \gamma_i)=e_i-k\},
\end{equation}
where $k$ is the minimal positive integer satisfying $\lambda=\zeta_{e_i}^{k}$ and for $v\in \ZZ^n \cap \relint(\Delta_i)$ we denote by $\height (v, \gamma_i)$ the lattice height of $v$ from the base $\gamma_i$ of $\Delta_i$. As in the same way as \cite[Theorem 5.9]{M-T-3}, by using Propositions \ref{prp:new} and \ref{prp:2-19} and Theorem \ref{thm:8-3} (ii), we obtain the following theorem. 

\begin{theorem}\label{thm:8-4}
In the situation as above, for $\lambda \in \CC^* \setminus\{1\}$, we have
\begin{enumerate}
\item The number of the Jordan blocks for the eigenvalue $\lambda$ with the maximal possible size $n$ in $\Phi_{n-1,0}$ is equal to $\sharp \{q_i \ |\ \lambda^{d_i}=1\}$.
\item The number of the Jordan blocks for the eigenvalue $\lambda$ with size $n-1$ in $\Phi_{n-1,0}$ is equal to $\sum_{i \colon \lambda^{e_i}=1} n(\lambda)_i$.
\end{enumerate}
\end{theorem}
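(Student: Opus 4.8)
The plan is to extract the stated combinatorial formulas from Theorem \ref{thm:8-3}(ii) by computing the relevant equivariant mixed Hodge numbers $e^{p,q}\big(\chi_h((1-\LL)^{m_\gamma}\cdot[Z^*_{\Delta_\gamma}])\big)_\lambda$ for each compact face $\gamma\prec\Gamma_+(f)$, and then summing over $\gamma$. First I would recall that for a class $Y$ with a quasi-unipotent action one has the basic transformation rule $e^{p,q}\big(\chi_h((1-\LL)^m\cdot[Y])\big)_\alpha=\sum_{j=0}^{m}(-1)^j\binom{m}{j}e^{p-j,q-j}(Y)_\alpha$, since $\LL$ carries the trivial $\hat\mu$-action and has Hodge type $(1,1)$. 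Applying Theorem \ref{thm:8-3}(ii) with $k=n$ (size exactly $n$, the maximal possible by the monodromy theorem) forces $p+q\in\{2n-2,2n-1\}$; but by Theorem \ref{S-S}(i) the numbers $e^{p,q}([H_f])_\lambda$ vanish outside $[0,n-1]^2$, so only $(p,q)=(n-1,n-1)$ contributes. Thus the count in (i) equals $(-1)^{n-1}\sum_{\gamma\subset\Gamma_f}e^{n-1,n-1}\big(\chi_h((1-\LL)^{m_\gamma}\cdot[Z^*_{\Delta_\gamma}])\big)_\lambda$.

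For a fixed face $\gamma$, write $d=\dim\gamma$, so $Z^*_{\Delta_\gamma}\subset T_{\Delta_\gamma}\simeq(\CC^*)^{d+1}$ is a non-degenerate hypersurface of dimension $d$, and $m_\gamma=\sharp S_\gamma-d-1$ with $\sharp S_\gamma\le n$. The top equivariant Hodge number $e^{n-1,n-1}$ of the twisted class is, by the transformation rule, a signed combination of the numbers $e^{p,q}(Z^*_{\Delta_\gamma})_\lambda$ with $p+q$ large; by Proposition \ref{prp:2-15} (applied with the ambient dimension $d+1$ in place of $n$), these vanish unless $p=q$ and $\lambda=1$ — except that Proposition \ref{prp:2-15} only controls the range $p+q>d$, so I must also track the ``diagonal'' contributions coming from $\varphi_{\lambda,0}(\Delta_\gamma)$ via Theorem \ref{thm:2-14}. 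Carrying this out, the only faces contributing to (i) are the vertices $q_i\in\Int(\RR^n_+)$: for such a $q_i$ the face is $0$-dimensional, $\Delta_{q_i}$ is a segment of lattice length $d_i$, $Z^*_{\Delta_{q_i}}$ is $\mu_{d_i}$-invariant with $\tau_{q_i}$ acting through $\zeta_{d_i}$, and one computes $e^{\bullet,\bullet}(Z^*_{\Delta_{q_i}})_\lambda$ to give a contribution of $1$ precisely when $\lambda^{d_i}=1$ (this is the translation of Proposition \ref{prp:2-19} with $\Pi(\Delta_{q_i})_{\lambda^{-1}}=1$) and $0$ otherwise. Positive-dimensional faces contribute to the $k=n$ count only through terms killed by Proposition \ref{prp:2-15} after the twist, because the relevant $p+q$ exceeds their dimension by too much — this is the step requiring care.

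For part (ii) I would repeat the computation with $k=n-1$, so now $p+q\in\{2n-3,2n-2\}$, and again by Theorem \ref{S-S}(i) only $(p,q)=(n-1,n-1)$ and $(p,q)=(n-1,n-2),(n-2,n-1)$ survive; but the $(n-1,n-1)$-part is exactly what was counted in (i), so by Theorem \ref{MF} the \emph{size-exactly-$n-1$} count is obtained by subtracting. Concretely the new ingredient is $e^{0,p}(Z^*)_\lambda$ for small codimension, governed by Proposition \ref{prp:new}: after the $(1-\LL)^{m_\gamma}$-twist, the faces $\gamma$ contributing are the $1$-dimensional ones $\gamma_i$ with $\relint(\gamma_i)\subset\Int(\RR^n_+)$, for which $\Delta_i$ is a triangle, and Proposition \ref{prp:new} (with $\d\Gamma=2$, i.e. $p=1$) together with the eigenvalue decomposition $l^*(\Gamma)_\lambda$ produces exactly the counting function $n(\lambda)_i$: the two summands $\sharp\{v:\height(v,\gamma_i)=k\}$ and $\sharp\{v:\height(v,\gamma_i)=e_i-k\}$ arise from the contribution of $\lambda$ and its complex conjugate $\overline\lambda=\zeta_{e_i}^{e_i-k}$ via the Hodge-theoretic symmetry $e^{p,q}(Z^*)_\alpha=e^{q,p}(Z^*)_{\overline\alpha}$. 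Summing over $i$ with $\lambda^{e_i}=1$ yields $\sum_{i\colon\lambda^{e_i}=1}n(\lambda)_i$. The main obstacle throughout is bookkeeping the twist by $(1-\LL)^{m_\gamma}$ against the vanishing ranges of Propositions \ref{prp:2-15}, \ref{prp:new} and \ref{prp:2-19}, i.e.\ verifying that exactly the advertised faces $q_i$ (resp.\ $\gamma_i$) survive and all other compact faces contribute nothing to the size $n$ (resp.\ size $n-1$) count — this is precisely the computation carried out in \cite[Theorem 5.9]{M-T-3} for monodromy at infinity, and the argument here is parallel.
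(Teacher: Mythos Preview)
Your proposal is correct and follows essentially the same approach as the paper's own proof, which simply states that the result follows ``in the same way as \cite[Theorem 5.9]{M-T-3}, by using Propositions \ref{prp:new} and \ref{prp:2-19} and Theorem \ref{thm:8-3}(ii).'' You spell out in more detail the mechanism (the $(1-\LL)^{m_\gamma}$ transformation rule, the dimension count via Proposition \ref{prp:2-15} isolating the interior vertices $q_i$ for $k=n$ and the interior edges $\gamma_i$ for $k=n-1$, then Propositions \ref{prp:2-19} and \ref{prp:new} to evaluate the surviving $e^{0,0}$ and $e^{1,0}$, $e^{0,1}$ terms), and you correctly identify the bookkeeping of which faces survive as the only delicate point, deferring it to the parallel computation in \cite[Theorem 5.9]{M-T-3} exactly as the paper does.
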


Note that by Theorem \ref{thm:8-3} and our results in Section \ref{sec:2} we can always calculate the whole Jordan normal form of $\Phi_{n-1,0}$. From now on, we shall rewrite Theorem \ref{thm:8-3} (ii) more explicitly in the case where any face $\gamma \prec \Gamma_{+}(f)$ such that $\gamma \subset \Gamma_f$ is prime (see Definition \ref{dfn:2-16} (i)). Recall that by Proposition \ref{prp:2-15} for $\lambda \in \CC^* \setminus \{1\}$ and a face $\gamma \prec \Gamma_{+}(f)$ such that $\gamma \subset \Gamma_f$ we have $e^{p,q}(Z_{\Delta_{\gamma}}^*)_{\lambda}=0$ for any $p,q \geq 0$ such that $p+q > \d \Delta_{\gamma}-1=\dim \gamma$. So the non-negative integers $r \geq 0$ such that $\sum_{p+q=r}e^{p,q}(Z_{\Delta_{\gamma}}^*)_{\lambda}\neq 0$ are contained in the closed interval $[0,\d \gamma]\subset \RR$.

\begin{definition}
For a face $\gamma \prec \Gamma_{+}(f)$ such that $\gamma \subset \Gamma_f$ and $k \geq 1$, we define a finite subset $J_{\gamma,k} \subset [0,\d \gamma] \cap \ZZ$ by
\begin{equation}
J_{\gamma,k}=\{0 \leq r\leq \d \gamma \ |\ n-2+k \equiv r \mod 2\}.
\end{equation}
For each $r\in J_{\gamma,k}$, set
\begin{equation}
d_{k,r}=\dfrac{n-2+k-r}{2}\in \ZZ_+.
\end{equation}
\end{definition}

If a face $\gamma \prec \Gamma_{+}(f)$ such that $\gamma \subset \Gamma_f$ is prime, then the polytope $\Delta_{\gamma}$ is pseudo-prime (see Definition \ref{dfn:2-16} (ii)). Then by Proposition \ref{cor:2-18} for $\lambda \in \CC^* \setminus \{1\}$ and an integer $r \geq 0$ such that $r\in [0,\d \gamma] $ we have
\begin{equation}
\sum_{p+q=r}e^{p,q}(\chi_h([Z_{\Delta_{\gamma}}^*]))_{\lambda}=(-1)^{\d \gamma +r+1} \sum_{\begin{subarray}{c} \Gamma\prec \Delta_{\gamma} \\ \d \Gamma=r+1\end{subarray}} \left\{ \sum_{\Gamma^{\prime} \prec \Gamma} (-1)^{\d \Gamma^{\prime}} \tl{\varphi}_{\lambda}(\Gamma^{\prime})\right\}.
\end{equation}
For simplicity, we denote this last integer by $e(\gamma,\lambda)_r$. Then by Theorem \ref{thm:8-3} (ii) we obtain the following result.

\begin{theorem}\label{thm:7-15} 
Assume that any face $\gamma \prec \Gamma_{+}(f)$ such that $\gamma \subset \Gamma_f$ is prime. Let $\lambda \in \CC^* \setminus\{1\}$ and $k\geq 1$. Then the number of the Jordan blocks for the eigenvalue $\lambda$ with sizes $\geq k$ in $\Phi_{n-1,0} \colon H^{n-1}(F_0;\CC) \simto H^{n-1}(F_0;\CC)$ is equal to 
\begin{equation}
(-1)^{n-1}\sum_{\gamma \subset \Gamma_f} \left\{ \sum_{r \in J_{\gamma, k}} (-1)^{d_{k,r}} \binom{m_{\gamma}}{d_{k,r}} \cdot e(\gamma,\lambda)_r + \sum_{r \in J_{\gamma, k+1}} (-1)^{d_{k+1,r}} \binom{m_{\gamma}}{d_{k+1,r}} \cdot e(\gamma,\lambda)_r\right\},
\end{equation}
where we used the convention $\binom{a}{b}=0$ ($0 \leq a <b$) for binomial coefficients.
\end{theorem}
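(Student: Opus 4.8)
The plan is to read off the statement as a bookkeeping consequence of Theorem~\ref{thm:8-3}~(ii) combined with Proposition~\ref{cor:2-18}, by tracking the two relevant total degrees $p+q=n-2+k$ and $p+q=n-1+k$ through the three operations that intervene in the right-hand side of \eqref{twot}: multiplication by $(1-\LL)^{m_{\gamma}}$, passage to the equivariant $e$-numbers of $Z^*_{\Delta_{\gamma}}$, and the Danilov--Khovanskii-type evaluation of the latter for pseudo-prime polytopes. First I would invoke Theorem~\ref{thm:8-3}~(ii): the number of Jordan blocks for $\lambda$ of size $\geq k$ equals $(-1)^{n-1}\sum_{p+q=n-2+k,\,n-1+k}\sum_{\gamma \subset \Gamma_f} e^{p,q}\big(\chi_h\big((1-\LL)^{m_{\gamma}}\cdot[Z^*_{\Delta_{\gamma}}]\big)\big)_{\lambda}$, so that it suffices to compute, for each compact face $\gamma \prec \Gamma_+(f)$ and each $s\in\{n-2+k,\,n-1+k\}$, the quantity $\sum_{p+q=s} e^{p,q}\big(\chi_h\big((1-\LL)^{m_{\gamma}}\cdot[Z^*_{\Delta_{\gamma}}]\big)\big)_{\lambda}$.

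Next I would analyze the factor $(1-\LL)^{m_{\gamma}}$. Since $\LL$ carries the trivial $\hat{\mu}$-action and is pure of Hodge type $(1,1)$, multiplication by $\LL$ in $\KK_0(\HSm)$ shifts the Hodge bidegree by $(1,1)$ and preserves the $\lambda$-eigenspace of the quasi-unipotent endomorphism; hence $e^{p,q}\big((1-\LL)^{m_{\gamma}}\cdot[Z]\big)_{\lambda}=\sum_{i\geq 0}(-1)^{i}\binom{m_{\gamma}}{i}\,e^{p-i,q-i}([Z])_{\lambda}$ for any class $[Z]$, and summing over $p+q=s$ gives $\sum_{p+q=s} e^{p,q}\big((1-\LL)^{m_{\gamma}}\cdot[Z^*_{\Delta_{\gamma}}]\big)_{\lambda}=\sum_{i\geq 0}(-1)^{i}\binom{m_{\gamma}}{i}\big(\sum_{p'+q'=s-2i} e^{p',q'}([Z^*_{\Delta_{\gamma}}])_{\lambda}\big)$. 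Now, because $\gamma$ is prime, $\Delta_{\gamma}$ is pseudo-prime, so Proposition~\ref{cor:2-18} applies to $Z^*_{\Delta_{\gamma}}\subset T_{\Delta_{\gamma}}$, with the role of $n$ there played by the ambient torus dimension $\d T_{\Delta_{\gamma}}=\d\Delta_{\gamma}=\d\gamma+1$; this identifies $\sum_{p'+q'=r} e^{p',q'}([Z^*_{\Delta_{\gamma}}])_{\lambda}$ with the integer $e(\gamma,\lambda)_r$ exactly as it was defined above (the sign $(-1)^{\d\gamma+r+1}$ being precisely the one produced by the substitution $n\rightsquigarrow \d\gamma+1$). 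Moreover by the vanishing in Proposition~\ref{prp:2-15} we have $e(\gamma,\lambda)_r=0$ unless $0\leq r\leq \d\gamma$.

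Finally I would do the re-indexing. Setting $r=s-2i$, the only surviving terms of the inner sum occur for $i=(s-r)/2$ with $r$ of the appropriate parity and $0\le r\le\d\gamma$: for $s=n-2+k$ this forces $i=d_{k,r}$ with $r\in J_{\gamma,k}$, and for $s=n-1+k=n-2+(k+1)$ it forces $i=d_{k+1,r}$ with $r\in J_{\gamma,k+1}$; the convention $\binom{a}{b}=0$ for $a<b$ takes care of the range $i>m_{\gamma}$, and $d_{k,r},d_{k+1,r}\in\ZZ_{\geq0}$ since $\d\gamma\leq n-1$ and $k\geq 1$. Substituting these two expressions into the sum from Theorem~\ref{thm:8-3}~(ii), summing over all compact faces $\gamma\subset\Gamma_f$ and carrying the sign $(-1)^{n-1}$ yields exactly the claimed formula.

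I do not expect a genuine obstacle here: the argument is formal once Theorem~\ref{thm:8-3} and the results of Section~\ref{sec:2} are available. The only points that need care are the two indicated above --- that the ``$n$'' of Proposition~\ref{cor:2-18} must be read as $\d T_{\Delta_{\gamma}}=\d\gamma+1$ so that the sign matches the definition of $e(\gamma,\lambda)_r$, and that the parity-and-positivity bookkeeping for $d_{k,r}$ is consistent with the definition of $J_{\gamma,k}$, which is the sole place where the discrete parameter $k$ enters.
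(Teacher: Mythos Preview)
Your proposal is correct and follows exactly the route the paper intends: the paper itself presents Theorem~\ref{thm:7-15} as an immediate consequence of Theorem~\ref{thm:8-3}~(ii) together with the identification $\sum_{p+q=r}e^{p,q}(\chi_h([Z^*_{\Delta_\gamma}]))_\lambda=e(\gamma,\lambda)_r$ coming from Proposition~\ref{cor:2-18}, and you have simply written out the bookkeeping (the $(1-\LL)^{m_\gamma}$ expansion and the re-indexing via $r=s-2i$) that the paper leaves implicit.
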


By combining the proof of \cite[Theorem 5.6]{D-K} and \cite[Proposition 2.14]{M-T-3} with Theorem \ref{thm:8-3} (iii), if any face $\gamma \prec \Gamma_{+}(f)$ such that $\gamma \subset \Gamma_f$ is prime we can also describe the Jordan blocks for the eigenvalue $1$ in $\Phi_{n-1,0}$ by a closed formula. Since this result is rather involved, we omit it here. 

\begin{remark}
Our results above are different from the previous ones due to Danilov \cite{Danilov} and Tanab{\'e} \cite{Tanabe}. For example, in \cite{Danilov} and \cite{Tanabe} they assume a stronger condition that the Newton polyhedron $\Gamma_+(f)$ itself is prime. We could weaken their condition, because our \cite[Propositions 2.13 and 2.14]{M-T-3} and Proposition \ref{cor:2-18} are generalizations of the corresponding results in \cite{D-K} to pseudo-prime polytopes.
\end{remark}

We can also obtain the corresponding results for the eigenvalue $1$ by rewriting Theorem \ref{thm:8-3} (iii) more simply as follows.

\begin{theorem}\label{thm:8}
In the situation of Theorem \ref{thm:8-3}, for $k\geq 1$ the number of the Jordan blocks for the eigenvalue $1$ with sizes $\geq k$ in $\Phi_{n-1,0}$ is equal to
\begin{equation}
(-1)^{n-1}\sum_{p+q=n-2-k, n-1-k}\left\{ \sum_{\gamma \subset \Gamma_f} e^{p,q}\( \chi_h\((1-\LL)^{m_{\gamma}} \cdot [Z_{\Delta_{\gamma}}^*]\)\)_{1} \right\}.
\end{equation}
\end{theorem}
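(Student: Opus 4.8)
The plan is to deduce Theorem \ref{thm:8} from Theorem \ref{thm:8-3} (iii) by showing that the second sum in Theorem \ref{thm:8-3} (iii), namely the contribution of the terms $\chi_h\big((1-\LL)^{m_{\gamma}+1}\cdot[Z_{\gamma}^*]\big)$ coming from the proper transform $Z$, vanishes after summation, and that the remaining sum over $\gamma$ of $e^{p,q}\big(\chi_h((1-\LL)^{m_{\gamma}}\cdot[Z_{\Delta_{\gamma}}^*])\big)_1$ over $p+q=n-1+k,n+k$ can be re-indexed to a sum over $p+q=n-2-k,n-1-k$. The first reduction should follow from the symmetry of the equivariant Hodge numbers for the eigenvalue $1$: the variety $Z_{\gamma}^*$ is a non-degenerate affine hypersurface in $(\CC^*)^{\d\gamma}$ with trivial $\hat\mu$-action, so only $\alpha=1$ contributes, and multiplication by $(1-\LL)^{m_{\gamma}+1}$ shifts bidegrees symmetrically; combined with Theorem \ref{S-S} (ii) (the symmetry $e^{p,q}([H_f])_1 = e^{n-q,n-p}([H_f])_1$) applied termwise, the two ranges $p+q = n-1+k, n+k$ pick out exactly the part that the $Z_{\gamma}^*$-terms cannot populate in the relevant weight range, or else these terms cancel between the two ranges.

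The cleaner route, which I would pursue first, is to work entirely with $\chi_h(\SS_{f,0})$ as a single element $[H_f]\in\KK_0(\HSm)$ and invoke the global symmetry. By Theorem \ref{MF} (ii), the number of Jordan blocks of size $\geq k$ for eigenvalue $1$ equals $(-1)^{n-1}\sum_{p+q=n-1+k,n+k} e^{p,q}([H_f])_1$. Now apply the symmetry $e^{p,q}([H_f])_1 = e^{n-q,n-p}([H_f])_1$ from Theorem \ref{S-S} (ii): substituting $p' = n-q$, $q' = n-p$ sends the locus $p+q = n-1+k$ to $p'+q' = n+1-k$... hmm, that is not quite $n-2-k$. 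Let me instead use the monodromy-filtration symmetry in its sharper form: for the primitive part, the number of size-$\geq k$ blocks is also computed by the ``lower'' half of the spectrum. Concretely, I would show $\sum_{p+q=n-1+k,n+k}e^{p,q}([H_f])_1 = \sum_{p+q=n-2-k,n-1-k}e^{p,q}([H_f])_1$ by pairing $(p,q)$ with $(n-q,n-p)$ within the two-step window and checking that the four lines $\{n-2-k,n-1-k,n-1+k,n+k\}$ are interchanged in pairs by $r\mapsto 2n-2-r$, using the exact symmetry of Theorem \ref{S-S} (ii) and the fact that $e^{0,0}([H_f])_1=1$ sits outside the relevant range for $k\geq 1$. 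Then the statement follows once we substitute the decomposition of Theorem \ref{thm:8-3} (i) and observe, via the same $r\mapsto 2n-2-r$ symmetry applied to each $Z_{\gamma}^*$-term separately (legitimate since each is an honest mixed Hodge structure with the required Poincaré–Lefschetz symmetry from its smooth projective closure, cf. the pairing before Proposition \ref{prp:new}), that the $Z_{\gamma}^*$-contributions to the shifted range reproduce themselves, so one may drop them from the formula exactly as they were absent from the ``$n-2-k$'' side — or, more precisely, that both formulas for the block count agree term-by-term in $\gamma$.

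The key steps, in order: (1) record that only $\alpha=1$ contributes to every term in Theorem \ref{thm:8-3} (i) that we touch, and that $(1-\LL)^{m}$ acts on equivariant Hodge numbers by the binomial convolution $e^{p,q}((1-\LL)^m\cdot[Z])_1 = \sum_j (-1)^j\binom{m}{j}e^{p-j,q-j}([Z])_1$; (2) establish the bidegree symmetry $e^{p,q}(\chi_h((1-\LL)^{m_\gamma}\cdot[Z_{\Delta_\gamma}^*]))_1 = e^{n-q,n-p}(\chi_h((1-\LL)^{m_\gamma}\cdot[Z_{\Delta_\gamma}^*]))_1$ and the analogous one for the $Z_\gamma^*$-terms, from the smooth projective Poincaré duality pairings plus the shift; (3) apply $r\mapsto 2n-2-r$ to see that $\sum_{r=n-1+k,n+k}(\text{line-}r\text{ sum}) = \sum_{r=n-2-k,n-1-k}(\text{line-}r\text{ sum})$ for each $\gamma$-term; (4) conclude by Theorem \ref{MF} (ii) or directly by Theorem \ref{thm:8-3} (iii). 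The main obstacle I anticipate is step (2)–(3) bookkeeping: one must be careful that the symmetry center is $2n-2$ (so that $\{n-1+k,n+k\}\leftrightarrow\{n-2-k,n-1-k\}$, i.e. $r + r' = 2n-2$), that the weight-monodromy normalization is consistent with the $(p,q)$-ranges in Theorem \ref{S-S}, and that the isolated $e^{0,0}([H_f])_1=1$ term genuinely falls outside both windows for all $k\geq 1$ (it does, since $n-2-k<0\leq n-1+k$ forces $0$ to lie on neither line unless $k$ is small, which must be checked by hand for $k=1,2$).
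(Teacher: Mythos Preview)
Your plan has a genuine gap at the symmetry step. The Steenbrink--Saito symmetry in Theorem \ref{S-S} (ii) is $e^{p,q}([H_f])_1 = e^{n-q,n-p}([H_f])_1$, which sends the line $p+q=r$ to the line $p+q = 2n-r$, \emph{not} $2n-2-r$. Under $r\mapsto 2n-r$ the pair $\{n-1+k,\,n+k\}$ goes to $\{n+1-k,\,n-k\}$, not to $\{n-1-k,\,n-2-k\}$. You noticed this mismatch yourself and then asserted the ``$r\mapsto 2n-2-r$'' symmetry anyway; but that map is not induced by $(p,q)\mapsto(n-q,n-p)$ and there is no reason for $[H_f]$ to satisfy it. Your termwise version in step (2) is also unfounded: $Z_{\Delta_\gamma}^*$ and $Z_\gamma^*$ are open varieties, so Poincar\'e duality applies only to their smooth projective closures, and even then the symmetry is centered at $\dim\gamma$, not at $n-1$. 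Multiplying by $(1-\LL)^{m_\gamma}$ does not repair this, since $m_\gamma$ depends on $\sharp S_\gamma$, not just on $\dim\gamma$.

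What is missing is precisely the extra shift by $(1,1)$ that the paper supplies as Theorem \ref{thm:7-6-1}: writing $A(p,q)=\sum_\gamma e^{p,q}(\chi_h((1-\LL)^{m_\gamma}[Z_{\Delta_\gamma}^*]))_1$ and $B(p,q)=e^{p,q}([H_f])_1$, the paper proves $A(p,q)=B(p+1,q+1)$ for $0\le p,q\le n-2$. This is equivalent (Proposition \ref{prp:7-6-2}) to the identity $\sum_\gamma e(\chi_h((1-\LL)^{m_\gamma+1}([Z_{\Delta_\gamma}^*]+[Z_\gamma^*])))_1 = 1-(t_1t_2)^n$, which in turn rests on the key computation $e(\chi_h([Z_{\Delta_\gamma}^*]+[Z_\gamma^*]))_1=(t_1t_2-1)^{\dim\gamma}$ (Proposition \ref{thm:7-6-10}), proved by induction using a prime truncated-pyramid compactification. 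Only after combining the Steenbrink symmetry (which moves $\{n-1+k,n+k\}$ to $\{n-k,n+1-k\}$) with this $(1,1)$-shift (which moves $\{n-k,n+1-k\}$ for $B$ to $\{n-2-k,n-1-k\}$ for $A$) do you reach the stated formula. In short, the $Z_\gamma^*$-terms do not simply ``cancel'' by symmetry; their removal is exactly the content of Proposition \ref{thm:7-6-10}, and that is the idea your proposal is missing.
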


As in the same way as \cite[Theorems 5.11 and 5.12]{M-T-3}, by using Propositions \ref{prp:new} and \ref{prp:2-19} and Theorem \ref{thm:8}, we obtain the following corollary. Denote by $\Pi_f$ the number of the lattice points on the $1$-skeleton of $\Gamma_f \cap \Int(\RR_+^n)$. Also, for a compact face $\gamma \prec \Gamma_+(f)$ we denote by $l^*(\gamma)$ the number of the lattice points on $\relint(\gamma)$.

\begin{corollary}\label{thm:8-5}
In the situation as above, we have
\begin{enumerate}
\item {\rm (van Doorn-Steenbrink \cite{D-St})} The number of the Jordan blocks for the eigenvalue $1$ with the maximal possible size $n-1$ in $\Phi_{n-1,0}$ is $\Pi_f$.
\item The number of the Jordan blocks for the eigenvalue $1$ with size $n-2$ in $\Phi_{n-1,0}$ is equal to $2\sum_{\gamma} l^*(\gamma)$, where $\gamma$ ranges through the compact faces of $\Gamma_+(f)$ such that $\d \gamma=2$ and $\relint(\gamma) \subset \Int(\RR_+^n)$.
\end{enumerate}
\end{corollary}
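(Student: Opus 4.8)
The plan is to deduce Corollary \ref{thm:8-5} from Theorem \ref{thm:8} by a direct computation of the equivariant mixed Hodge numbers $e^{p,q}\big(\chi_h((1-\LL)^{m_{\gamma}}\cdot[Z_{\Delta_{\gamma}}^*])\big)_1$ in the two extreme ranges of $p+q$, exactly paralleling the treatment of the monodromy at infinity in \cite[Theorems 5.11 and 5.12]{M-T-3}. First I would fix $k$ and note that $(1-\LL)^{m_\gamma}$ multiplies $\chi_h([Z_{\Delta_\gamma}^*])$ by a polynomial in $\LL$ of degree $m_\gamma$, so that, writing $N=\dim\gamma$, we have $e^{p,q}\big(\chi_h((1-\LL)^{m_\gamma}\cdot[Z_{\Delta_\gamma}^*])\big)_1=\sum_{j=0}^{m_\gamma}(-1)^j\binom{m_\gamma}{j}\,e^{p-j,q-j}(Z_{\Delta_\gamma}^*)_1$. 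Since $\d\Delta_\gamma=N+1$, Poincaré–Serre duality on a smooth projective compactification together with Proposition \ref{prp:2-15} forces $e^{p,q}(Z_{\Delta_\gamma}^*)_1$ to vanish unless $0\le p,q\le N$, so the only faces $\gamma$ contributing to the top range $p+q\in\{n-2-k,\,n-1-k\}$ — equivalently, after the shift by $j\le m_\gamma$, to $p'+q'$ near $N$ — are those of the largest relevant dimension.

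Next I would carry out the two cases separately. For part (i), taking $k=n-1$ (the maximal possible size asserted by the monodromy theorem), the sum in Theorem \ref{thm:8} runs over $p+q\in\{-1,0\}$; only $p=q=0$ survives, and $m_\gamma=0$ is forced (otherwise the binomial shift pushes $p-j$ or $q-j$ negative), which singles out the faces $\gamma$ with $\sharp S_\gamma=\dim\gamma+1$, i.e. $\gamma$ whose affine span is a coordinate subspace of dimension $\dim\gamma+1$ — precisely the faces lying on the $1$-skeleton direction of $\Gamma_f\cap\Int(\RR_+^n)$ after one notes the recursive structure. Applying Proposition \ref{prp:2-19} for $\alpha=1$ to $\Delta_\gamma$ gives $e^{0,0}(Z_{\Delta_\gamma}^*)_1=(-1)^{\dim\gamma}(\Pi(\Delta_\gamma)_1-1)$, and summing over the relevant $\gamma$ (a telescoping/inclusion–exclusion over the face lattice, as in \cite[Section 5]{D-K}) collapses to exactly the count $\Pi_f$ of lattice points on the $1$-skeleton of $\Gamma_f\cap\Int(\RR_+^n)$; this recovers van Doorn--Steenbrink. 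For part (ii), taking $k=n-2$, the sum runs over $p+q\in\{0,1\}$; the $p+q=1$ term and the $j=1$ shift of the $p+q=0$ data bring in $e^{1,0}(Z_{\Delta_\gamma}^*)_1$ and $e^{0,1}(Z_{\Delta_\gamma}^*)_1$, which by Proposition \ref{prp:new} (with $\alpha=1$, $p=1$) equal $(-1)^{\dim\gamma}\sum_{\Gamma\prec\Delta_\gamma,\ \d\Gamma=2}l^*(\Gamma)_1$. Sorting the $2$-dimensional sub-faces $\Gamma$ according to whether they contain the vertex $0$ of $\Delta_\gamma$ or lie in $\gamma$, and summing over all $\gamma\subset\Gamma_f$, one is left with $2\sum_\gamma l^*(\gamma)$ over the $2$-dimensional compact faces $\gamma$ with $\relint(\gamma)\subset\Int(\RR_+^n)$; the factor $2$ is the source of the parity statement, and the overall sign $(-1)^{n-1}$ cancels the $(-1)^{\dim\gamma}$-type signs after the dimension bookkeeping, leaving a non-negative even integer.

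The main obstacle I anticipate is the combinatorial bookkeeping that converts the face-lattice sums coming out of Propositions \ref{prp:new} and \ref{prp:2-19} — which are indexed by sub-faces of the cones $\Delta_\gamma$ over faces $\gamma$ of $\Gamma_+(f)$ — into the clean intrinsic quantities $\Pi_f$ and $\sum_\gamma l^*(\gamma)$ attached to $\Gamma_f$ itself. Concretely, each lattice point on the $1$-skeleton of $\Gamma_f\cap\Int(\RR_+^n)$ is counted with multiplicity through many faces $\gamma$, and one must check that the alternating signs $(-1)^{\dim\gamma}$, together with the constraint $m_\gamma=0$ (resp. the two-term structure for $k=n-2$), leave exactly multiplicity one (resp. two). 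This is the same inclusion–exclusion over dual fans that underlies \cite[Corollary 5.10]{D-K} and \cite[Theorems 5.11, 5.12]{M-T-3}, and I would simply transcribe that argument, checking that the presence of the extra $(1-\LL)^{m_\gamma}$ factor does not spoil it — which it does not, precisely because the binomial shift $j\le m_\gamma$ is incompatible with the vanishing range unless $m_\gamma$ is as small as the case demands. Everything else is a routine substitution of the results of Section \ref{sec:2} into Theorem \ref{thm:8}.
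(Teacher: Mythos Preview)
Your overall strategy is exactly the one the paper uses: apply Theorem \ref{thm:8} for $k=n-1$ and $k=n-2$, feed in Propositions \ref{prp:new} and \ref{prp:2-19}, and carry out the same face-lattice inclusion--exclusion as in \cite[Theorems 5.11 and 5.12]{M-T-3}. The identification of the factor $2$ in part (ii) as coming from $e^{1,0}+e^{0,1}=2e^{1,0}$ (since $\overline{1}=1$) is also correct, and you have correctly located the real work in the combinatorial collapse of the sum over $\gamma\subset\Gamma_f$.

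There is, however, one concrete error in your sketch that you should fix before writing it out. When $p=q=0$ (or more generally $p+q\le 1$), the binomial expansion
\[
e^{p,q}\bigl(\chi_h((1-\LL)^{m_\gamma}[Z_{\Delta_\gamma}^*])\bigr)_1=\sum_{j=0}^{m_\gamma}(-1)^j\binom{m_\gamma}{j}e^{p-j,q-j}(Z_{\Delta_\gamma}^*)_1
\]
does \emph{not} force $m_\gamma=0$; it only forces $j=0$, since $e^{p-j,q-j}=0$ once an index becomes negative. The term $j=0$ is present for every $m_\gamma$ with coefficient $\binom{m_\gamma}{0}=1$, so \emph{all} compact faces $\gamma\subset\Gamma_f$ contribute $e^{0,0}(Z_{\Delta_\gamma}^*)_1$ (resp.\ $e^{1,0}+e^{0,1}$), not merely those with $\sharp S_\gamma=\dim\gamma+1$. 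In particular your description ``faces lying on the $1$-skeleton direction of $\Gamma_f\cap\Int(\RR_+^n)$'' is not the correct indexing set at this stage. The role of $m_\gamma$ (equivalently $\sharp S_\gamma$) enters only later, through the inclusion--exclusion: each fixed low-dimensional face $\sigma$ of $\Gamma_f$ appears as a face of $\Delta_\gamma$ for many $\gamma$, and it is the alternating sum $\sum_{\gamma\supset\sigma}(-1)^{\dim\gamma}$ over the star of $\sigma$ that singles out exactly those $\sigma$ with $\relint(\sigma)\subset\Int(\RR_+^n)$. Once you correct this, the argument you outline goes through and coincides with the paper's.
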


Note that Corollary \ref{thm:8-5} (i) was previously obtained in van Doorn-Steenbrink \cite{D-St} by different methods. Theorem \ref{thm:8} asserts that by replacing $\Gamma_+(f)$ with the Newton polyhedron at infinity $\Gamma_{\infty}(f)$ in \cite{L-S}, \cite{M-T-2} and \cite{M-T-3} etc. the combinatorial description of the local monodromy $\Phi_{n-1,0}$ is the same as that of the global one $\Phi_{n-1}^{\infty}$ obtained in \cite[Theorem 5.7 (iii)]{M-T-3}. Namely we find a beautiful symmetry between local and global. Theorem \ref{thm:8} can be deduced from the following more precise result.

\begin{theorem}\label{thm:7-6-1}
In the situation as above, for any $0 \leq p,q \leq n-2$ we have
\begin{eqnarray}
\lefteqn{\sum_{\gamma \subset \Gamma_f}e^{p,q}\( \chi_h\( (1-\LL)^{m_{\gamma}}[Z_{\Delta_{\gamma}}^*]\)\)_1}\nonumber \\
&=&\sum_{\gamma \subset \Gamma_f}e^{p+1,q+1}
\( \chi_h\( (1-\LL)^{m_{\gamma}}[Z_{\Delta_{\gamma}}^*]+ (1-\LL)^{m_{\gamma}+1}[Z_{\gamma}^*]\)\)_1.
\label{eq:7-6-1}
\end{eqnarray}
\end{theorem}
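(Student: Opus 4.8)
The plan is to prove the identity \eqref{eq:7-6-1} by unwinding the equivariant mixed Hodge numbers on both sides into purely combinatorial data attached to the faces $\gamma \prec \Gamma_+(f)$ with $\gamma \subset \Gamma_f$, and then matching the contributions face by face. First I would fix $p,q$ and use the binomial expansion of $(1-\LL)^{m_\gamma}$ (and $(1-\LL)^{m_\gamma+1}$) together with the fact that multiplication by $\LL$ shifts Hodge bidegrees by $(1,1)$, so that
\[
e^{p,q}\!\big(\chi_h\big((1-\LL)^{m_\gamma}[Z_{\Delta_\gamma}^*]\big)\big)_1
=\sum_{i=0}^{m_\gamma}(-1)^i\binom{m_\gamma}{i}\, e^{p-i,q-i}\big(\chi_h([Z_{\Delta_\gamma}^*])\big)_1,
\]
and similarly for the $[Z_\gamma^*]$ term. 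Thus both sides of \eqref{eq:7-6-1} become finite sums, over $\gamma$ and over a shift index $i$, of the basic numbers $e^{p',q'}(\chi_h([Z_{\Delta_\gamma}^*]))_1$ and $e^{p',q'}(\chi_h([Z_\gamma^*]))_1$ for the non-degenerate toric hypersurfaces $Z_{\Delta_\gamma}^* \subset T_{\Delta_\gamma}$ and $Z_\gamma^* \subset T(\gamma)$.

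Next I would feed these into the machinery of Section~\ref{sec:2}. The point is that $Z_{\Delta_\gamma}^*$ carries the $l_{\tau_\gamma}$-action whose invariant ($\alpha=1$) part is controlled by the lattice-point counts of the polytope $\Delta_\gamma$ (the convex hull of $\{0\}\sqcup\gamma$), while $Z_\gamma^*$ has trivial action and is a non-degenerate hypersurface with Newton polytope $\gamma'\cong\gamma$. Using Theorem~\ref{thm:2-14} (for the $\alpha=1$ generating function $\varphi_{1,\bullet}$), Proposition~\ref{prp:2-15}, and the relation between the interior lattice points of $\Delta_\gamma$ and those of its facet $\gamma$ — namely that the "bottom" facet of $\Delta_\gamma$ opposite the apex $0$ is $\gamma$ itself, so that slicing $\Delta_\gamma$ by lattice heights relates $l^*(k\Delta_\gamma)_1$ to the numbers $l^*(\Gamma)$ for faces $\Gamma\prec\gamma$ — one converts the right-hand side of \eqref{eq:7-6-1} into the same combinatorial expression in the $\varphi_{1,\bullet}$ that the left-hand side produces, but with Hodge bidegree shifted by $(1,1)$. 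Concretely, I expect the identity to reduce, after summing over $\gamma$, to a statement of the form: the Hodge–Deligne contribution of the "cone" hypersurface $[Z_{\Delta_\gamma}^*]$ in bidegree $(p,q)$ equals the contribution of $[Z_{\Delta_\gamma}^*]$ plus the $T$-divisor hypersurface $[Z_\gamma^*]$ in bidegree $(p+1,q+1)$, which is precisely the long-exact-sequence / Gysin relation that one gets by comparing $Z_{\Delta_\gamma}^*$ with its closure and the boundary stratum sitting over the facet $\gamma$. This is the same mechanism as in the proof of \cite[Theorem 5.11 and 5.12]{M-T-3}, with $\Gamma_\infty(f)$ replaced by $\Gamma_+(f)$.

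The main obstacle, I expect, is bookkeeping the boundary strata correctly: when one resolves and takes the toric compactification $X_{\Delta_\gamma}$ of $T_{\Delta_\gamma}$, the closure $\overline{Z_{\Delta_\gamma}^*}$ meets the toric divisor corresponding to the facet $\gamma$ in a hypersurface isomorphic to $\overline{Z_\gamma^*}$, and one must check that the $l_{\tau_\gamma}$-action on this divisor becomes trivial (because $\tau_\gamma^v = 1$ for $v$ on a vertex of $\gamma$ after translation, by the very definition of $\tau_\gamma$ via heights from $\LL(\gamma)$) — this is exactly what makes the $\alpha=1$ part of the boundary match the trivially-acted-upon $[Z_\gamma^*]$. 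The rest — the additivity of $\chi_h$, the $(1,1)$-shift from the normal bundle, and the summation over all faces $\gamma\subset\Gamma_f$ so that the non-compact contributions telescope — is routine given the results already quoted in Sections~\ref{sec:2} and \ref{sec:3}. Once \eqref{eq:7-6-1} is established, Theorem~\ref{thm:8} follows by substituting it into Theorem~\ref{thm:8-3} (iii), since the $\gamma$-sum with $\d\gamma\geq 1$ on the right of Theorem~\ref{thm:8-3} (iii) is absorbed into a reindexing $p\mapsto p-1$, $q\mapsto q-1$ of the $[Z_{\Delta_\gamma}^*]$-sum.
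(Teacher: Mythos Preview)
Your proposal has the right overall shape---expand $(1-\LL)^{m_\gamma}$ binomially, relate $\Delta_\gamma$ to its base $\gamma$, compactify, and sum over $\gamma$---but it stops short of the decisive step and makes one concrete misstep.

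The paper does not prove \eqref{eq:7-6-1} by a Gysin/long-exact-sequence comparison between $Z_{\Delta_\gamma}^*$ and a single boundary stratum. Instead it reduces \eqref{eq:7-6-1} to the generating-function identity of Proposition~\ref{prp:7-6-2}, which in turn rests on the \emph{face-by-face} statement (Proposition~\ref{thm:7-6-10})
\[
e^{p,q}(Z_\gamma^*)_1 + e^{p,q}(Z_{\Delta_\gamma}^*)_1
=(-1)^{\dim\gamma+p}\binom{\dim\gamma}{p}\,\delta_{p,q}.
\]
This says the \emph{sum} of the two hypersurface contributions is purely diagonal; the off-diagonal parts of $Z_{\Delta_\gamma}^*$ and $Z_\gamma^*$ cancel exactly. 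Your ``Gysin'' heuristic does not produce this: a single Gysin sequence relates a variety to its closure and the full boundary, not just the stratum over $\gamma$, and in any case the results of Section~\ref{sec:2} that you invoke give only $\sum_q e^{p,q}$ and edge values, not the individual $e^{p,q}$ needed here.

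The concrete misstep is the choice of compactification. You propose to take the closure in $X_{\Delta_\gamma}$, but $\Delta_\gamma$ is a pyramid with apex $0$ and is \emph{not} prime unless $\gamma$ is a simplex, so $X_{\Delta_\gamma}$ is singular and Poincar\'e duality is unavailable. The paper instead constructs an auxiliary \emph{truncated} pyramid $\Box_{\gamma''}=\Delta_\gamma+\gamma'$ (with $\gamma'$ a prime polytope majorizing $\gamma$), which is prime and majorizes $\Delta_\gamma$; then $\overline{Z_{\Delta_\gamma}^*}\subset X_{\Box_{\gamma''}}$ is smooth projective. With this in hand, the proof of Proposition~\ref{thm:7-6-10} goes by induction on $\dim\gamma$: one computes $e^{p,q}(\overline{Z_{\Delta_\gamma}^*})_1$ via Proposition~\ref{prp:7-6-5} (using your height-slicing observation, which is Lemma~\ref{lem:7-6-7}), applies Poincar\'e duality, and closes the case $p+q=\dim\gamma$ using the Dehn--Sommerville equations (Lemma~\ref{lem:7-6-9}). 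None of these three ingredients---the truncated-pyramid compactification, the induction, or Dehn--Sommerville---appears in your outline. Only after Proposition~\ref{thm:7-6-10} is in place does the summation over $\gamma$ telescope, via the elementary count $\sum_{\sharp S_\gamma=l}(-1)^{\dim\gamma}=(-1)^{l-1}\binom{n}{l}$, to give $1-(t_1t_2)^n$; this is what makes \eqref{eq:7-6-1} hold (it does \emph{not} hold for each $\gamma$ separately). Finally, your appeal to \cite[Theorems 5.11 and 5.12]{M-T-3} is circular here: those are the \emph{applications} (the analogues of Corollary~\ref{thm:8-5}), whereas the extra $[Z_\gamma^*]$ terms are a feature of the local motivic Milnor fiber absent from $\SS_f^\infty$, so the present argument is precisely what is new relative to \cite{M-T-3}.
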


We can easily see that Theorem \ref{thm:7-6-1} follows from Proposition \ref{prp:7-6-2} below. For $[V] \in \KK_0(\HSm)$, let $e([V])_1=\sum_{p,q=0}^{\infty}e^{p,q}([V])_1t_1^pt_2^q$ be the generating function of $e^{p,q}([V])_1$ as in \cite{D-K}.

\begin{proposition}\label{prp:7-6-2}
We have
\begin{equation}
\sum_{\gamma \subset \Gamma_f}e\( \chi_h\( (1-\LL)^{m_{\gamma}+1}([Z_{\Delta_{\gamma}}^*]+[Z_{\gamma}^*])\)\)_1=1-(t_1t_2)^n.
\end{equation}
\end{proposition}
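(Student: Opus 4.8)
\textbf{Proof proposal for Proposition \ref{prp:7-6-2}.}

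The plan is to compute the generating function on the left-hand side by combining the $Z_{\Delta_\gamma}^*$ and $Z_\gamma^*$ contributions face-by-face, and to recognize the total as the $e$-function of a single $n$-dimensional projective object whose cohomology is well understood. First I would apply the Hodge-characteristic/$e$-function formalism: since $\chi_h$ is a ring homomorphism and $e(\,\cdot\,)_1$ is additive and multiplicative with respect to the Lefschetz motive (so that multiplication by $(1-\LL)$ corresponds to multiplication by $(1-t_1t_2)$ on generating functions), the left-hand side becomes
\begin{equation}
\sum_{\gamma \subset \Gamma_f}(1-t_1t_2)^{m_\gamma+1}\Bigl\{ e\bigl(\chi_h([Z_{\Delta_\gamma}^*])\bigr)_1 + e\bigl(\chi_h([Z_\gamma^*])\bigr)_1\Bigr\}.
\end{equation}
The key point is that for each face $\gamma$ the two hypersurfaces $Z_{\Delta_\gamma}^*\subset T_{\Delta_\gamma}$ and $Z_\gamma^*\subset T(\gamma)$ together with the ambient tori they live in assemble, after multiplying by the appropriate powers of $(1-\LL)$, into the $(1-\LL)$-expansion of the class of a torus, exactly as in the inclusion-exclusion that produces the Newton-polyhedron formula for $\SS_{f,0}$. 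Concretely, running the argument of \cite[Theorem 5.7(i)]{M-T-3} (the toric-variety decomposition already invoked in the proof of Theorem \ref{thm:8-3}) backwards, the sum over all $\gamma\subset\Gamma_f$ of these pieces telescopes to the class of $(\CC^*)^n$ itself, whose motivic Milnor-type contribution for the eigenvalue $1$ is captured by $[\PP^n]$ minus lower strata, i.e. the generating function $1-(t_1t_2)^n$.

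More precisely, the route I would take is: (1) rewrite $[Z_{\Delta_\gamma}^*]$ using that $\Delta_\gamma$ is the cone over $\gamma$ with apex $0$, so $T_{\Delta_\gamma}\setminus Z_{\Delta_\gamma}^*$ fibers over $T(\gamma)\setminus Z_\gamma^*$ with $\CC^*$-fibers (coming from the height-$\ZZ$ direction), giving an identity in $\M_\CC^{\hat\mu}$ of the shape $[T_{\Delta_\gamma}]-[Z_{\Delta_\gamma}^*]=(\LL-1)\bigl([T(\gamma)]-[Z_\gamma^*]\bigr)$, hence $[Z_{\Delta_\gamma}^*]+(1-\LL)^{-1}\cdot(\text{torus terms})$ simplifies; (2) feed this into the sum, using $[T_{\Delta_\gamma}]=(\LL-1)^{\d\gamma+1}$ and $[T(\gamma)]=(\LL-1)^{\d\gamma}$, so that the $Z$-dependent terms recombine with the torus terms into a purely combinatorial sum of powers of $(\LL-1)$ indexed by faces of $\Gamma_f$ with their codimension data $m_\gamma$; (3) identify that combinatorial sum with the face-lattice generating identity for the simplicial complex $\Gamma_f$ (equivalently, for the fan $\Sigma_1$), which after applying $e(\,\cdot\,)_1$ yields $1-(t_1t_2)^n$. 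Throughout I would use the symmetry and vanishing statements of Theorem \ref{S-S} to control which $(p,q)$ can appear, and Proposition \ref{prp:2-15} to pin down the top-degree behavior, so that the claimed closed form $1-(t_1t_2)^n$ (rather than merely its degree) is forced.

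The main obstacle I anticipate is step (2)–(3): making the face-by-face recombination genuinely telescope requires the same bookkeeping with the minimal coordinate subspaces $S_\gamma$ and the integers $m_\gamma=\sharp S_\gamma-\d\gamma-1$ that appears in Theorem \ref{thm:8-3}(i), and one must check that the ``boundary'' faces (those meeting $\partial\RR_+^n$, where the toric resolution $\Sigma$ was deliberately not subdivided) contribute nothing spurious — this is exactly the role of convenience of $f$. Concretely the delicate part is verifying that the alternating sum $\sum_{\gamma\subset\Gamma_f}(1-\LL)^{m_\gamma+1}\bigl([T_{\Delta_\gamma}]+[T(\gamma)]\bigr)$ collapses to $[\CC^n]-[\{0\}]$ in $\M_\CC^{\hat\mu}$ up to the terms already accounted for, which I would establish by pulling the whole identity back to the toric variety $X_\Sigma$ and stratifying $\pi^{-1}(0)$ by $T$-orbits, mirroring the proof of Theorem \ref{thm:8-3}(i) but now summing the motives rather than the Denef–Loeser correction terms. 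Once that collapse is in hand, applying $e(\,\cdot\,)_1$ and using $e([\CC^n])_1=(t_1t_2)^n$, $e([\{0\}])_1=1$ with the appropriate sign from the $(1-\LL)^{\bullet}$ factors gives the stated $1-(t_1t_2)^n$.
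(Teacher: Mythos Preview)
Your step (1) is where the argument breaks. In suitable coordinates one has $T_{\Delta_\gamma}\simeq T(\gamma)\times\CC^*_s$ and $g_\gamma(y,s)=s^{d_\gamma}h(y)-1$ with $Z_\gamma^*=\{h=0\}$; hence $Z_{\Delta_\gamma}^*=\{s^{d_\gamma}h(y)=1\}$ is a $d_\gamma$-fold \emph{covering} of $T(\gamma)\setminus Z_\gamma^*$, and the complement $T_{\Delta_\gamma}\setminus Z_{\Delta_\gamma}^*$ has fibre $\CC^*$ over $\{h=0\}$ but $\CC^*\setminus\mu_{d_\gamma}$ over $\{h\neq 0\}$. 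So the asserted identity $[T_{\Delta_\gamma}]-[Z_{\Delta_\gamma}^*]=(\LL-1)\bigl([T(\gamma)]-[Z_\gamma^*]\bigr)$ fails already for $\d\gamma=0$, where it would force $[\mu_{d_\gamma}]=0$ in $\M_\CC^{\hat\mu}$. Consequently the later ``telescoping'' to a pure sum of torus classes never gets off the ground, and the appeal to Theorem~\ref{S-S} cannot rescue it, since that symmetry concerns the global object $[H_f]$ and not the individual pieces $[Z_{\Delta_\gamma}^*]$, $[Z_\gamma^*]$.

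The deeper point is that the identity you are after is \emph{not} a relation in $\M_\CC^{\hat\mu}$ at all: the paper proves the much sharper per-face statement $e(\chi_h([Z_{\Delta_\gamma}^*]+[Z_\gamma^*]))_1=(t_1t_2-1)^{\d\gamma}$ (Proposition~\ref{thm:7-6-10}), and its proof genuinely uses Hodge theory---one compactifies $Z_{\Delta_\gamma}^*$ inside the toric variety of a prime ``truncated pyramid'' $\Box_{\gamma''}$ majorising $\Delta_\gamma$, applies Proposition~\ref{prp:7-6-5} together with Lemma~\ref{lem:7-6-7} to compute $\sum_q e^{p,q}(\overline{Z_{\Delta_\gamma}^*})_1$, and then combines Poincar\'e duality on the smooth projective $\overline{Z_{\Delta_\gamma}^*}$ with the Dehn--Sommerville relations (Lemma~\ref{lem:7-6-9}) and induction on $\d\gamma$. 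Only \emph{after} this Hodge-theoretic step does the sum over $\gamma\subset\Gamma_f$ become the elementary binomial identity $\sum_{l=1}^n(1-t_1t_2)^l\binom{n}{l}(-1)^{l-1}=1-(t_1t_2)^n$, using that $\sum_{\sharp S_\gamma=l}(-1)^{\d\gamma}=(-1)^{l-1}\binom{n}{l}$ by convenience of $f$. A purely motivic/telescoping argument as you outline cannot supply the Poincar\'e-duality input, so the route has a real gap rather than just missing bookkeeping.
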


From now on, we shall prove Proposition \ref{prp:7-6-2}. First, we apply Proposition \ref{prp:7-6-5} to the case where $\Delta =\Delta_{\gamma}$ for a face $\gamma$ of $\Gamma_+(f)$ such that $\gamma \subset \Gamma_f$. Let $\gamma^{\prime}$ be a prime polytope in $\RR^{\d \gamma}$ which majorizes $\gamma$ and consider the Minkowski sum $\gamma^{\prime \prime}:=\gamma+\gamma^{\prime}$ (resp. $\Box_{\gamma^{\prime\prime}}:= \Delta_{\gamma}+\gamma^{\prime}$) in $\RR^{\d \gamma}$ (resp. $\RR^{\d \gamma +1}$). Then $\Box_{\gamma^{\prime\prime}}$ is a $(\d \gamma +1)$-dimensional truncated pyramid whose top (resp. bottom) is $\gamma^{\prime}$ (resp. $\gamma^{\prime \prime}$) (see Figure 1 below). In particular, $\Box_{\gamma^{\prime\prime}}$ is prime. Since the dual fan of $\gamma^{\prime \prime}$ coincides with that of $\gamma^{\prime}$, the prime polytope $\gamma^{\prime \prime}$ majorizes $\gamma$. Let $\Psi \colon {\rm som}(\gamma^{\prime \prime}) \longrightarrow {\rm som}(\gamma)$ be the morphism between the sets of the vertices of $\gamma^{\prime \prime}$ and $\gamma$. By extending $\Psi$ to a morphism $\tl{\Psi} \colon {\rm som}(\Box_{\gamma^{\prime \prime}}) \longrightarrow {\rm som}(\Delta_{\gamma})$ as
\begin{equation}
\tl{\Psi}(w)=
\begin{cases}
\Psi(w) & (w\in {\rm som}(\gamma^{\prime \prime})),\\
\{0\} & (w\in {\rm som}(\gamma^{\prime})),
\end{cases}
\end{equation}
we see that the prime polytope $\Box_{\gamma^{\prime \prime}}$ majorizes $\Delta_{\gamma}$.

\vspace{3mm}
\begin{center}
\includegraphics[scale=0.75]{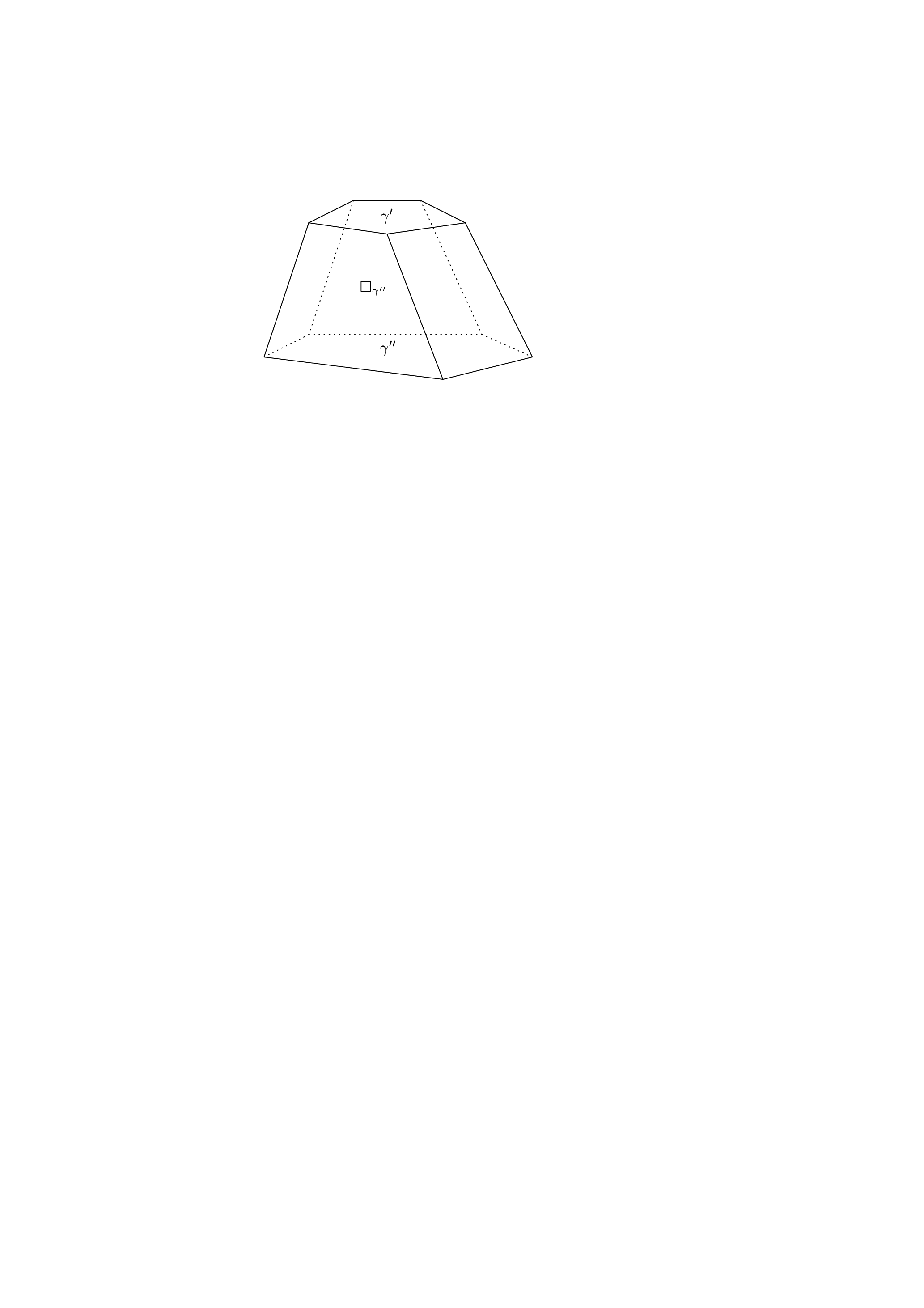}

Figure 1
\end{center}

\begin{proposition}\label{cor:7-6-6}
For the closure $\overline{Z_{\Delta_{\gamma}}^*}$ of $Z_{\Delta_{\gamma}}^*$ in $X_{\Box_{\gamma^{\prime \prime}}}$, we have
\begin{equation}
\sum_q e^{p,q}(\overline{Z_{\Delta_{\gamma}}^*})_1=\sum_{\tau\prec \gamma^{\prime \prime}} (-1)^{\d \tau+p} \binom{\d \tau}{p}.
\end{equation}
\end{proposition}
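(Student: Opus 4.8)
The plan is to apply Proposition~\ref{prp:7-6-5} directly to the majorization $\tl{\Psi}\colon {\rm som}(\Box_{\gamma^{\prime\prime}})\longrightarrow {\rm som}(\Delta_{\gamma})$ constructed above, and then simplify the resulting double sum using the primeness of $\Box_{\gamma^{\prime\prime}}$. The key observation is that since both $\Box_{\gamma^{\prime\prime}}$ and its images under $\tl\Psi$ are prime (indeed $\tl\Psi(\Gamma)$ is always a face of the prime polytope $\Delta_\gamma$, hence itself prime, or the single vertex $\{0\}$), each cone $\Con(\Delta_\gamma,\tl\Psi(\Gamma))$ is generated by a basis of its span. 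Concretely, I expect that for a face $\Gamma\prec\Box_{\gamma^{\prime\prime}}$ one has $\dd_\Gamma=\d\Gamma-\d\tl\Psi(\Gamma)\in\{0\}$ precisely when $\Gamma$ already ``sits over'' a genuine face of $\Delta_\gamma$ of the same dimension, and $\dd_\Gamma>0$ only for the faces created by the majorization; the truncated pyramid structure of $\Box_{\gamma^{\prime\prime}}$ makes this combinatorics completely explicit.

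First I would write out the right-hand side of \eqref{eq:7-6-5} for $\Delta'=\Box_{\gamma^{\prime\prime}}$, $\Delta=\Delta_\gamma$, $\Psi=\tl\Psi$. The first sum $\sum_{\Gamma\prec\Box_{\gamma^{\prime\prime}}}(-1)^{\d\Gamma+p+1}\{\binom{\d\Gamma}{p+1}-\binom{\dd_\Gamma}{p+1}\}$ needs to be organized according to which faces $\Gamma$ of $\Box_{\gamma^{\prime\prime}}$ have $\dd_\Gamma=0$ (those in bijection with faces of $\Delta_\gamma$, for which the contribution telescopes) versus $\dd_\Gamma>0$. For the second sum I would use that since $\Psi(\Gamma)$ is prime, the quantities $\varphi_{1,\d\Psi(\Gamma)-p+i}(\Psi(\Gamma))$ are governed by Lemma~\ref{lem:7-6-9} together with the Dehn--Sommerville relations; in fact for a prime polytope $\Delta'$ of dimension $d$ one knows from \cite{D-K} that $\sum_q e^{p,q}(\overline{Z^*})_1=\sum_{\Gamma\prec\Delta'}(-1)^{\d\Gamma+p}\binom{\d\Gamma}{p}$ when $\overline{Z^*}\subset X_{\Delta'}$, so the second sum should collapse to a pure face-counting expression on the prime pieces $\Psi(\Gamma)$. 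The target identity $\sum_q e^{p,q}(\overline{Z_{\Delta_\gamma}^*})_1=\sum_{\tau\prec\gamma^{\prime\prime}}(-1)^{\d\tau+p}\binom{\d\tau}{p}$ involves only $\gamma^{\prime\prime}$ — the bottom face of the truncated pyramid — so the main work is showing that the contributions from all the other faces of $\Box_{\gamma^{\prime\prime}}$ (the top $\gamma'$ and the ``lateral'' faces) cancel against the $\binom{\dd_\Gamma}{p+1}$ and $\binom{\dd_\Gamma}{i}\varphi$ correction terms.

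The main obstacle, I expect, will be bookkeeping the faces of the truncated pyramid $\Box_{\gamma^{\prime\prime}}$ and matching $\tl\Psi$-fibers. Every face $\tau\prec\gamma^{\prime\prime}$ gives rise to a corresponding lateral face of $\Box_{\gamma^{\prime\prime}}$ of dimension $\d\tau+1$ (the ``side wall over $\tau$'') together with $\tau$ itself and the matching face of $\gamma'$; the images under $\tl\Psi$ of these three types of faces are the corresponding face of $\gamma$, that same face, and $\{0\}$ respectively. Tracking $\dd_\Gamma$ across these three families and verifying that the alternating sums with binomial weights leave exactly $\sum_{\tau\prec\gamma^{\prime\prime}}(-1)^{\d\tau+p}\binom{\d\tau}{p}$ is the crux. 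I would handle it by introducing the generating variable $t$ (working with $\sum_p(\text{RHS})t^p$ as in \cite{D-K}), so that the binomial identities become the single relation $\sum_\Gamma(-1)^{\d\Gamma}(1-t)^{\d\Gamma}=\cdots$, and then the cancellation among the top/lateral/bottom families of faces becomes a factorization of the form $(\text{pyramid})=(1-t)\cdot(\text{lateral})+(\text{bottom})+(\text{top})$, which can be checked dimension by dimension. Once this combinatorial lemma on truncated pyramids is in hand, Proposition~\ref{cor:7-6-6} follows by substitution, and no analytic or Hodge-theoretic input beyond Proposition~\ref{prp:7-6-5} is needed.
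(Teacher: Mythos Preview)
Your overall strategy is right and matches the paper: apply Proposition~\ref{prp:7-6-5} with $\Delta'=\Box_{\gamma^{\prime\prime}}$, $\Delta=\Delta_\gamma$, and organize the faces of the truncated pyramid into bottom faces $\tau\prec\gamma^{\prime\prime}$, lateral walls $\Box_\tau$, and top faces $\sigma\prec\gamma'$. The first (binomial) sum in \eqref{eq:7-6-5} does collapse to $\sum_{\tau\prec\gamma^{\prime\prime}}(-1)^{\d\tau+p}\binom{\d\tau}{p}$ after pairing each $\tau$ with $\Box_\tau$ and using Pascal's identity; your generating-function repackaging would handle this part.

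The gap is in the second sum, the one carrying the $\varphi$ terms. First, a correction to your bookkeeping: $\tl\Psi(\Box_\tau)$ is not the face $\Psi(\tau)\prec\gamma$ but the pyramid $\Delta_{\Psi(\tau)}\prec\Delta_\gamma$, since $\Box_\tau$ has vertices in both $\gamma'$ (sent to $0$) and $\gamma^{\prime\prime}$ (sent into $\gamma$), so its image must contain the apex. Second, and more seriously, the coefficients $\varphi_{1,j}(\tl\Psi(\Gamma))$ are Ehrhart-type lattice-point data, not face-enumeration data; Lemma~\ref{lem:7-6-9} and the Dehn--Sommerville relations concern the latter and cannot simplify them. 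There is no general result in \cite{D-K} reducing $\sum_q e^{p,q}(\overline{Z^*})_1$ to pure face counts for an arbitrary prime ambient polytope, and in any case $\Delta_\gamma$ is not prime at the apex $0$ unless $\gamma$ is a simplex. What actually kills the second sum is the arithmetic identity of Lemma~\ref{lem:7-6-7}: from the layer-by-layer count $l^*((k+1)\Delta_\sigma)_1-l^*(k\Delta_\sigma)_1=l^*(k\sigma)_1$ one gets $P_1(\Delta_\sigma;t)=tP_1(\sigma;t)$, hence $\varphi_{1,j+1}(\Delta_\sigma)=\varphi_{1,j}(\sigma)$ for every $\sigma\prec\gamma$. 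Since $\dd_{\Box_\tau}=\dd_\tau$ while $\d\tl\Psi(\Box_\tau)=\d\Psi(\tau)+1$, the $\varphi$-contribution of $\Box_\tau$ then cancels exactly against that of $\tau$ (the extra dimension supplies the opposite sign), and the top faces drop out because $\d\tl\Psi(\sigma)=0$. Without this lattice-point input the $\varphi$-terms will not disappear, and no purely combinatorial bookkeeping on the face poset of $\Box_{\gamma^{\prime\prime}}$ can substitute for it.
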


\begin{proof}
It suffices to rewrite Proposition \ref{prp:7-6-5} in this case. For a face $\Gamma$ of $\Box_{\gamma^{\prime\prime}}$, we set $\dd_{\Gamma}=\d \Gamma - \d \tl{\Psi} (\Gamma)$. Note that the set of faces of $\Box_{\gamma^{\prime \prime}}$ consists of those of $\gamma^{\prime}$ and $\gamma^{\prime \prime}$ and side faces. Each side face of $\Box_{\gamma^{\prime\prime}}$ is a truncated pyramid $\Box_{\tau}$ whose bottom is $\tau\prec \gamma^{\prime \prime}$. Since $\d \Box_{\tau}=\d \tau +1$ and $\dd_{\Box_{\tau}}=\dd_{\tau}$ for $\tau \prec \gamma^{\prime \prime}$, we have
\begin{equation}
\sum_{\Gamma\prec \Box_{\gamma^{\prime \prime}}} (-1)^{\d\Gamma+p+1} \left\{\binom{\d \Gamma}{p+1}-\binom{\dd_{\Gamma}}{p+1}\right\}= \sum_{\tau \prec \gamma^{\prime \prime}}(-1)^{\d \tau +p} \binom{\d \tau}{p}
\end{equation}
and
\begin{eqnarray}
\lefteqn{\sum_{\Gamma \prec \Box_{\gamma^{\prime \prime}}}(-1)^{\d \Gamma +1}\sum_{i=0}^{\min\{\dd_{\Gamma},p\}}\binom{\dd_{\Gamma}}{i}(-1)^i \varphi_{1,\d \tl{\Psi}(\Gamma)-p+i}(\tl{\Psi}(\Gamma))}\nonumber \\
&=& \sum_{\tau \prec \gamma^{\prime \prime}}(-1)^{\d \tau +1}\sum_{i=0}^{\min\{\dd_{\tau},p\}}\binom{\dd_{\tau}}{i}(-1)^i \nonumber \\
& & \hspace{10mm}\times \left\{\varphi_{1,\d \Psi(\tau)-p+i}(\Psi(\tau))-\varphi_{1, \d \tl{\Psi}(\Box_{\tau})-p+i}(\tl{\Psi}(\Box_{\tau}))\right\},
\end{eqnarray}
where the faces $\tau$ of the top $\gamma^{\prime}$ of $\Box_{\gamma^{\prime \prime}}$ are neglected by the condition $\d \tl{\Psi}(\tau)=0$. By $\tl{\Psi}(\Box_{\tau})=\Delta_{\Psi(\tau)}$ and Lemma \ref{lem:7-6-7} below, the last term is equal to $0$.\qed
\end{proof}

\begin{lemma}\label{lem:7-6-7}
For any face $\gamma$ of $\Gamma_+(f)$ such that $\gamma \subset \Gamma_f$, we have
\begin{equation}\label{eq:7-6-7}
\varphi_{1,j+1}(\Delta_{\gamma})=\varphi_{1,j}(\gamma).
\end{equation}
\end{lemma}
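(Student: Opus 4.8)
The plan is to compare the two Ehrhart-type generating functions defining $\varphi_{1,\bullet}(\Delta_\gamma)$ and $\varphi_{1,\bullet}(\gamma)$ by exploiting the fact that $\Delta_\gamma$ is the cone (pyramid) over $\gamma$ with apex $0$. First I would fix a vertex $w$ of $\gamma$; it is also a vertex of $\Delta_\gamma$, and after translating by $w$ we may assume $0 \prec \Delta_\gamma^w$ with $\tau_\gamma^v = 1$ at every vertex of $\Delta_\gamma^w$, so that $P_1(\Delta_\gamma;t)$ is computed from the numbers $l^*(k\Delta_\gamma)_1 = \sharp\{v \in \Int(k\Delta_\gamma^w)\cap\ZZ^n \mid \tau_\gamma^v=1\}$, and similarly for $\gamma$ using the lattice $M_\gamma$ of dimension $\d\gamma+1$ versus $\d\gamma$. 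The key geometric observation is that since $\gamma$ is a facet of $\Delta_\gamma$ not through the apex, dilating $k\Delta_\gamma$ and slicing by the affine hyperplanes parallel to $\LL(\gamma)$ at lattice heights $1, 2, \ldots, k$ exhibits $\Int(k\Delta_\gamma^w)$ as a disjoint union, over these slices, of (relative interiors of) dilates of $\gamma$; more precisely the slice at height $j$ (counted from the apex) is a copy of $\frac{j}{?}\gamma$ rescaled appropriately. I need to set up this slicing carefully so that the $\tau_\gamma^v = 1$ condition, which by the definition of $\tau_\gamma$ amounts to $d_\gamma \mid \height(v,\gamma)$, is tracked correctly through the slices.

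The main computation is then to show $\dsum_{k\geq 0} l^*(k\Delta_\gamma)_1 t^k = \dfrac{t}{1-t}\,\dsum_{k\geq 0}l^*(k\gamma)_1 t^k$ — the extra factor $\frac{t}{1-t} = \sum_{j\geq 1}t^j$ records the summation over the slice heights $j\geq 1$, and the shift by $t$ (rather than starting at $j=0$) reflects that the apex contributes nothing to the interior and that a point interior to $k\Delta_\gamma^w$ must sit at height strictly between $0$ and the top. Once this identity on the Ehrhart series is established, I would multiply both sides by $(1-t)^{n+1}$ on the left and by $(1-t)^{\d\gamma+1}$... — wait, the normalization differs by the ambient dimension, so I would instead note that $P_1(\Delta_\gamma;t) = (1-t)^{\d\Delta_\gamma+1}\{\sum l^*(k\Delta_\gamma)_1 t^k\}$ and $P_1(\gamma;t)=(1-t)^{\d\gamma+1}\{\sum l^*(k\gamma)_1 t^k\}$, and since $\d\Delta_\gamma = \d\gamma+1$ we get $(1-t)^{\d\gamma+2}\cdot\frac{t}{1-t}\sum l^*(k\gamma)_1 t^k = t\cdot(1-t)^{\d\gamma+1}\sum l^*(k\gamma)_1 t^k$, i.e. $P_1(\Delta_\gamma;t)=t\,P_1(\gamma;t)$. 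Comparing coefficients of $t^{j+1}$ gives exactly $\varphi_{1,j+1}(\Delta_\gamma)=\varphi_{1,j}(\gamma)$.

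The hard part will be justifying the slicing identity on lattice-point counts with full rigor, in particular checking that the lattice $M_\gamma = \ZZ^n\cap\LL(\Delta_\gamma)$ restricts compatibly to the sublattice cut out on $\LL(\gamma)$, so that a lattice point in a height-$j$ slice of $k\Delta_\gamma^w$ corresponds bijectively to a lattice point in the relative interior of the appropriate dilate of $\gamma^w$; one must be careful that "lattice height $=1$" really is the minimal positive height and that no primitivity issue inflates or deflates the count. I would handle this by choosing coordinates on $M_\gamma$ in which $\LL(\gamma)$ is $\{x_{\d\gamma+1}=\text{const}\}$ and the height function is the last coordinate, reducing the statement to the elementary fact that for a lattice polytope $\gamma$ in $\ZZ^{\d\gamma}$, the interior lattice points of the cone-dilate stratify by height into interiors of dilates of $\gamma$ — this is the standard computation behind $\sum l^*(k\cdot\mathrm{pyr}(\gamma))t^k = \frac{t}{1-t}\sum l^*(k\gamma)t^k$ in Ehrhart theory (cf. \cite{D-K}, Section 4.4), now decorated with the (here vacuous-modulo-$d_\gamma$) weight $\tau_\gamma^v=1$. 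Since $d_\gamma = \height(0,\gamma)$ and we only retain heights divisible by $d_\gamma$ on both sides symmetrically, the weight does not disturb the identity.
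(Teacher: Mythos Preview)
Your approach is essentially the same as the paper's: the paper simply asserts the relation $l^*((k+1)\Delta_{\gamma})_1-l^*(k\Delta_{\gamma})_1=l^*(k\gamma)_1$ for $k\geq 0$ (the difference form of your slicing identity $l^*(k\Delta_\gamma)_1=\sum_{m=1}^{k-1}l^*(m\gamma)_1$) and immediately deduces $P_1(\Delta_\gamma;t)=t\,P_1(\gamma;t)$, then compares coefficients of $t^{j+1}$. Your write-up just supplies the geometric justification---pyramidal slicing by $\ell$-height, with the condition $\tau_\gamma^v=1$ selecting the integer-dilate slices $m\gamma$---that the paper leaves implicit.
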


\begin{proof}
By the relation $l^*((k+1)\Delta_{\gamma})_1-l^*(k\Delta_{\gamma})_1=l^*(k\gamma)_1$ ($k \geq 0$) we have
\begin{equation}
P_1(\Delta_{\gamma};t)=tP_1(\gamma;t).
\end{equation}
By comparing the coefficients of $t^{j+1}$ in both sides, we obtain \eqref{eq:7-6-7}.\qed
\end{proof}

The following proposition is a key in the proof of Proposition \ref{prp:7-6-2}.

\begin{proposition}\label{thm:7-6-10}
For any face $\gamma$ of $\Gamma_+(f)$ such that $\gamma \subset \Gamma_f$, we have
\begin{equation}\label{eq:7-6-10}
e(\chi_h([Z_{\Delta_{\gamma}}^*]+[Z_{\gamma}^*]))_1=(t_1t_2-1)^{\d \gamma}.
\end{equation}
\end{proposition}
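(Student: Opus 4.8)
The plan is to compute the generating function $e(\chi_h([Z_{\Delta_\gamma}^*]+[Z_\gamma^*]))_1$ by passing to smooth projective compactifications and using the results of Section \ref{sec:2}. First I would fix a face $\gamma \subset \Gamma_f$ and use the prime polytope $\Box_{\gamma''}$ majorizing $\Delta_\gamma$ constructed above (with top $\gamma'$ and bottom $\gamma''=\gamma+\gamma'$). Taking the closures $\overline{Z_{\Delta_\gamma}^*}$ and $\overline{Z_\gamma^*}$ in the smooth projective toric varieties $X_{\Box_{\gamma''}}$ and $X_{\gamma''}$ respectively, the difference between $e^{p,q}$ of the open hypersurface and of its closure is governed by the strata at infinity, which are again non-degenerate hypersurfaces in tori of lower dimension; so by induction on $\d\gamma$ (or directly via the additivity of $e^{p,q}$ over the toric stratification) the statement \eqref{eq:7-6-10} for the open varieties reduces to an analogous identity for the closed ones.

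The main computational input is Proposition \ref{cor:7-6-6}, which already gives $\sum_q e^{p,q}(\overline{Z_{\Delta_\gamma}^*})_1 = \sum_{\tau \prec \gamma''}(-1)^{\d\tau+p}\binom{\d\tau}{p}$. I would prove the parallel formula $\sum_q e^{p,q}(\overline{Z_\gamma^*})_1 = \sum_{\tau\prec\gamma''}(-1)^{\d\tau+p+1}\binom{\d\tau}{p+1}$ by applying Proposition \ref{prp:7-6-5} with $\Delta=\gamma$, $\Delta'=\gamma''$ and $\Psi$ the majorization map (here $\Box_\tau$ does not appear, so the $\varphi_1$-terms contribute via $\tl\varphi$-type cancellations handled exactly as in \cite{D-K}). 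Adding the two, all terms with $\binom{\d\tau}{p}$ and $\binom{\d\tau}{p+1}$ telescope against each other across the face poset of $\gamma''$, and since $\gamma''$ is prime I would invoke Lemma \ref{lem:7-6-9} (the Dehn–Sommerville equations) to match the two halves; the net effect is that $\sum_q e^{p,q}(\overline{Z_{\Delta_\gamma}^*}+\overline{Z_\gamma^*})_1$ collapses to the single binomial $(-1)^{\d\gamma}\binom{\d\gamma}{p}$-pattern, whose generating function in $t_1$ (after restoring the $t_2$-dependence by the Poincaré-type duality $e^{p,q}(\overline{Z})_1=e^{\d\gamma-q,\d\gamma-p}(\overline{Z})_1$ available on the smooth projective compactification) is exactly $(t_1t_2-1)^{\d\gamma}$.

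The hard part will be bookkeeping the passage from closed to open hypersurfaces uniformly over all faces and all toric boundary strata: one must check that the correction terms coming from $X_{\Box_{\gamma''}}\setminus T_{\Delta_\gamma}$ and $X_{\gamma''}\setminus T(\gamma)$ cancel in the sum $[Z_{\Delta_\gamma}^*]+[Z_\gamma^*]$, which is where the specific combinatorics of the truncated pyramid $\Box_{\gamma''}$ (its side faces being $\Box_\tau$ with $\tl\Psi(\Box_\tau)=\Delta_{\Psi(\tau)}$) together with Lemma \ref{lem:7-6-7} (the shift $\varphi_{1,j+1}(\Delta_\gamma)=\varphi_{1,j}(\gamma)$) does the work. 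Once \eqref{eq:7-6-10} is established for each $\gamma$, Proposition \ref{prp:7-6-2} follows by summing $(1-\LL)^{m_\gamma+1}$-twisted contributions over $\gamma\subset\Gamma_f$: multiplication by $(1-\LL)^{m_\gamma+1}$ shifts $e(\cdot)_1$ by $(t_1t_2)^{m_\gamma+1}$, and since $\sum_{\gamma\subset\Gamma_f}(t_1t_2-1)^{\d\gamma}(t_1t_2)^{m_\gamma+1}$ is precisely a telescoping sum over the face poset of $\Gamma_f$ (using that $\Gamma_f$, being the Newton boundary of a convenient polynomial, has the combinatorial type of the boundary of a simplex), it collapses to $1-(t_1t_2)^n$.
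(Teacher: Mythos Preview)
Your proposal has a genuine gap. The claimed ``parallel formula''
\[
\sum_q e^{p,q}(\overline{Z_{\gamma}^*})_1 \;=\; \sum_{\tau\prec\gamma''}(-1)^{\d\tau+p+1}\binom{\d\tau}{p+1}
\]
is false. Applying Proposition \ref{prp:7-6-5} to $\Delta=\gamma$, $\Delta'=\gamma''$ leaves a nontrivial sum of $\varphi_{1,j}(\Psi(\tau))$-terms, and there is no pairing mechanism to make them vanish: for $\overline{Z_\gamma^*}$ in $X_{\gamma''}$ the boundary strata are just lower-dimensional $Z_{\Psi(\tau)}^*$'s, with no $\Delta$-type companions. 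A one-line check: if $\gamma$ is a segment of lattice length $d$, then $\overline{Z_\gamma^*}\subset\PP^1$ consists of $d$ points, so $\sum_q e^{0,q}(\overline{Z_\gamma^*})_1=d$, whereas your right-hand side gives $1$. Consequently the ``telescoping'' in your next step never gets off the ground, and knowing only the column sums $\sum_q e^{p,q}$ together with Poincar\'e duality would in any case not pin down individual $e^{p,q}$.

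The paper's proof avoids this by \emph{not} compactifying $Z_\gamma^*$ separately. The single closure $\overline{Z_{\Delta_\gamma}^*}\subset X_{\Box_{\gamma''}}$ already has both kinds of strata built in: over a face $\tau\prec\gamma''$ of the bottom one sees $(\CC^*)^{\dd_\tau}\times Z_{\Psi(\tau)}^*$, and over the matching side face $\Box_\tau$ one sees $(\CC^*)^{\dd_\tau}\times Z_{\Delta_{\Psi(\tau)}}^*$ (since $\tl\Psi(\Box_\tau)=\Delta_{\Psi(\tau)}$). This yields
\[
e^{p,q}(\overline{Z_{\Delta_\gamma}^*})_1
=\sum_{\tau\prec\gamma''}\sum_i(-1)^{i+\dd_\tau}\binom{\dd_\tau}{i}
\Bigl\{e^{p-i,q-i}(Z_{\Psi(\tau)}^*)_1+e^{p-i,q-i}(Z_{\Delta_{\Psi(\tau)}}^*)_1\Bigr\},
\]
so induction on $\d\gamma$ applies directly to the \emph{sum} on the right, isolating the $\tau=\gamma''$ term (where $\dd_{\gamma''}=0$) as $e^{p,q}(Z_\gamma^*)_1+e^{p,q}(Z_{\Delta_\gamma}^*)_1$. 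The $\varphi$-cancellation via Lemma \ref{lem:7-6-7} that you correctly identified happens inside Proposition \ref{cor:7-6-6}, i.e.\ for the column sums of $\overline{Z_{\Delta_\gamma}^*}$; the individual $e^{p,q}(\overline{Z_{\Delta_\gamma}^*})_1$ are then determined by splitting into the ranges $p+q>\d\gamma$ (Proposition \ref{prp:2-15}), $p+q<\d\gamma$ (Poincar\'e duality plus Lemma \ref{lem:7-6-9}), and $p+q=\d\gamma$ (subtract the known off-diagonal terms from the column sum). Your sketch has the right ingredients but applies them to the wrong object; the fix is to run the whole argument on $\overline{Z_{\Delta_\gamma}^*}$ alone.
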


\begin{proof}
It is enough to prove
\begin{equation}\label{eq:7-6-10-1}
e^{p,q}(Z_{\gamma}^*)_1 +e^{p,q}(Z_{\Delta_{\gamma}}^*)_1=(-1)^{\d \gamma +p}\binom{\d \gamma}{p} \cdot \delta_{p,q},
\end{equation}
where $\delta_{p,q}$ is Kronecker's delta. We consider the closure $\overline{Z_{\Delta_{\gamma}}^*}$ of $Z_{\Delta_{\gamma}}^*$ in $X_{\Box_{\gamma^{\prime \prime}}}$. Then by the proofs of Propositions \ref{prp:7-6-5} and \ref{cor:7-6-6}, we have
\begin{align}
e^{p,q}(\overline{Z_{\Delta_{\gamma}}^*})_1
&= \sum_{\tau \prec \gamma^{\prime \prime}}\left\{e^{p,q}((\CC^*)^{
\dd_{\tau}}\times Z_{\Psi(\tau)}^*)_1 +e^{p,q}((\CC^*)^{\dd_{\Box_{\tau}}}\times Z_{\tl{\Psi}(\Box_{\tau})}^*)_1\right\}\\
&= \sum_{\tau \prec \gamma^{\prime \prime}} \sum_{i=0}^{\min\{\dd_{\tau},p\}}\binom{\dd_{\tau}}{i}(-1)^{i+\dd_{\tau}} \left\{e^{p-i,q-i}(Z_{\Psi(\tau)}^*)_1 +e^{p-i,q-i}(Z_{\Delta_{\Psi(\tau)}}^*)_1\right\}.
\label{eq:7-6-10-2}
\end{align}

Let us prove \eqref{eq:7-6-10-1} by induction on $\d \gamma$. In the case $\d \gamma=0$, we can prove \eqref{eq:7-6-10-1} easily by Propositions \ref{prp:2-15} and \ref{prp:2-19}. Assume that for any $\sigma\subset \Gamma_f$ such that $\d \sigma <\d \gamma$ \eqref{eq:7-6-10-1} holds. Then by $\dd_{\gamma^{\prime \prime}}=0$ and \eqref{eq:7-6-10-2} we have
\begin{equation}\label{eq:7-6-10-4}
e^{p,q}(\overline{Z_{\Delta_{\gamma}}^*})_1=e^{p,q}(Z_{\gamma}^*)_1 +e^{p,q}(Z_{\Delta_{\gamma}}^*)_1+ \delta_{p,q}\sum_{\tau \precneqq \gamma^{\prime \prime}} (-1)^{\d \tau +p}\binom{\d \tau}{p}.
\end{equation}
In the case $p+q> \d \gamma$, by Proposition \ref{prp:2-15} we have
\begin{equation}
e^{p,q}(\overline{Z_{\Delta_{\gamma}}^*})_1= \delta_{p,q}\sum_{\tau \prec \gamma^{\prime \prime}} (-1)^{\d \tau +p}\binom{\d \tau}{p}.
\end{equation}
Therefore, also in the case $p+q< \d \gamma$, by the Poincar{\'e} duality for $\overline{Z_{\Delta_{\gamma}}^*}$ ($\Box_{\gamma^{\prime \prime}}$ is prime) and Lemma \ref{lem:7-6-9} we have
\begin{eqnarray}
e^{p,q}(\overline{Z_{\Delta_{\gamma}}^*})_1
&=& e^{\d \gamma -p, \d \gamma-q}(\overline{Z_{\Delta_{\gamma}}^*})_1\\
&=& \delta_{p,q}\sum_{\tau \prec \gamma^{\prime \prime}} (-1)^{\d \tau +\d \gamma -p}\binom{\d \tau}{\d \gamma -p}\\
&=& \delta_{p,q}\sum_{\tau \prec \gamma^{\prime \prime}} (-1)^{\d \tau +p}\binom{\d \tau}{p}.
\end{eqnarray}
In the case $p+q=\d \gamma$, by Proposition \ref{cor:7-6-6} and the previous results we have
\begin{eqnarray}
e^{p,q}(\overline{Z_{\Delta_{\gamma}}^*})_1
&=&\sum_{q^{\prime}}e^{p,q^{\prime}}(\overline{Z_{\Delta_{\gamma}}^*})_1-(1-\delta_{p,q})e^{p,p}(\overline{Z_{\Delta_{\gamma}}^*})_1\\
&=&\delta_{p,q} \sum_{\tau \prec \gamma^{\prime \prime}} (-1)^{\d \tau +p}\binom{\d \tau}{p}.
\end{eqnarray}
By \eqref{eq:7-6-10-4}, we obtain \eqref{eq:7-6-10-1} for any $p,q$. \qed
\end{proof}

Now we can finish the proof of Proposition \ref{prp:7-6-2} as follows. By Proposition \ref{thm:7-6-10}, we have
\begin{align}
\sum_{\gamma \subset \Gamma_f}e\( \chi_h\( (1-\LL)^{m_{\gamma}+1}([
Z_{\Delta_{\gamma}}^*]+[Z_{\gamma}^*])\)\)_1
&=\sum_{\gamma \subset \Gamma_f} (1-t_1t_2)^{m_{\gamma}+1}(t_1t_2-1)^{\d \gamma}\\
&= \sum_{l=1}^n (1-t_1t_2)^l \sum_{\sharp S_{\gamma}=l}(-1)^{\d \gamma}\\
&= \sum_{l=1}^n (1-t_1t_2)^l \binom{n}{l}(-1)^{l-1}\\
&= 1-(t_1t_2)^n.
\end{align}
\qed

\begin{remark}
Following the proof of \cite[Theorem 5.16]{M-T-3}, we can easily give another proof to the Steenbrink conjecture which was proved  by Varchenko-Khovanskii \cite{K-V} and Saito \cite{Saito-3} independently. For an introduction to this conjecture, see an excellent survey in Kulikov \cite{Kulikov} etc.
\end{remark}

\begin{remark}
For a polynomial map $f \colon \CC^n \longrightarrow \CC$, it is well-known that there exists a finite subset $B \subset \CC$ such that the restriction
\begin{equation}
\CC^n \setminus f^{-1}(B) \longrightarrow \CC \setminus B
\end{equation}
of $f$ is a locally trivial fibration. We denote by $B_f$ the smallest such subset $B \subset \CC$. For a point $b\in B_f$, take a small circle $C_{\e}(b)=\{x\in \CC\ |\ |x-b|=\e\}$ ($0<\e\ll 1$) around $b$ such that $B_f\cap \{x \in \CC\ |\ |x-b|\leq\e\}=\{b\}$. Then by the restriction of $\CC^n \setminus f^{-1}(B_f) \longrightarrow \CC \setminus B_f$ to $C_{\e}(b)\subset \CC\setminus B_f$ we obtain a geometric monodromy automorphism $\Phi_f^b \colon f^{-1}(b+\e) \simto f^{-1}(b+\e)$ and the linear maps
\begin{equation}
\Phi_j^b \colon H^j(f^{-1}(b+\e ) ;\CC) \overset{\sim}{\longrightarrow} H^j(f^{-1}(b+\e ) ;\CC) \ \ (j=0,1,\ldots)
\end{equation}
associated to it. The eigenvalues of $\Phi_j^b$ were studied in \cite[Sections 3 and 4]{M-T-2} etc. If $f$ is tame at infinity, as in \cite[Section 4]{M-T-3} we can introduce a motivic Milnor fiber $\SS_f^{b} \in \M_{\CC}^{\hat{\mu}}$ along the central fiber $f^{-1}(b)$ to calculate the numbers of the Jordan blocks for the eigenvalues $\lambda \neq 1$ in $\Phi_{n-1}^b$. This result can be easily obtained  by using the proof of Sabbah \cite[Theorem 13.1]{Sabbah-2}. It would be an interesting problem to construct a motivic object to calculate the eigenvalue $1$ part of $\Phi_{n-1}^b$.
\end{remark}

\end{document}